\def\dashint{\fint}
\title{Potential theory for nonlocal drift-diffusion equations}
\author{Quoc-Hung Nguyen}
\address{Academy of Mathematics and Systems Science,
	Chinese Academy of Sciences,
	Beijing 100190, PR China.}
\email{qhnguyen@amss.ac.cn}
\author{Simon Nowak}
\address{Fakult\"at f\"ur Mathematik, Universit\"at Bielefeld, Postfach 100131, D-33501 Bielefeld, Germany}
\email{simon.nowak@uni-bielefeld.de}
 \author{Yannick Sire}
 \address{Department of Mathematics, Johns Hopkins University, Baltimore, MD, 21218}
\email{ysire1@jhu.edu}
\author{Marvin Weidner} 
\address{Departament de Matemàtiques i Informàtica, Universitat de Barcelona, Gran Via de les Corts Catalanes 585, 08007 Barcelona, Spain}
\email{mweidner@ub.edu}
\newtheorem{theorem}{Theorem}[section]
\newtheorem{proposition}{Proposition}[section]
\newtheorem{lemma}{Lemma}[section]
\newtheorem{corollary}{Corollary}[section]
\theoremstyle{definition}
\newtheorem{definition}{Definition}
\newtheorem{remark}{Remark}[section]
\numberwithin{equation}{section}
\def\eqn#1$$#2$${\begin{equation}\label#1#2\end{equation}}
\def\charfn_#1{{\raise1.2pt\hbox{$\chi_{\kern-1pt\lower3pt\hbox{{$\scriptstyle#1$}}}$}}}
\newcommand{\eps}{\varepsilon}
 \DeclareMathOperator*{\osc}{osc}
\DeclareMathOperator{\tail}{Tail}
\def\dive{\operatorname{div}}
\newcommand{\divo}{\textnormal{div}}
\newcommand{\loc}{_{loc}}
\newcommand{\cE}{\mathcal{E}}
\newcommand{\R}{\mathbb{R}}
\renewcommand{\d}{\textnormal{d}}
\def\loc{\operatorname{loc}}
\def\mean#1{\mathchoice%
          {\mathop{\kern 0.2em\vrule width 0.6em height 0.69678ex depth -0.58065ex
                  \kern -0.8em \intop}\nolimits_{\kern -0.4em#1}}%
          {\mathop{\kern 0.1em\vrule width 0.5em height 0.69678ex depth -0.60387ex
                  \kern -0.6em \intop}\nolimits_{#1}}%
          {\mathop{\kern 0.1em\vrule width 0.5em height 0.69678ex
              depth -0.60387ex
                  \kern -0.6em \intop}\nolimits_{#1}}%
          {\mathop{\kern 0.1em\vrule width 0.5em height 0.69678ex depth -0.60387ex
                  \kern -0.6em \intop}\nolimits_{#1}}}
\def\vintslides_#1{\mathchoice%
          {\mathop{\kern 0.1em\vrule width 0.5em height 0.697ex depth -0.581ex
                  \kern -0.6em \intop}\nolimits_{\kern -0.4em#1}}%
          {\mathop{\kern 0.1em\vrule width 0.3em height 0.697ex depth -0.604ex
                  \kern -0.4em \intop}\nolimits_{#1}}%
          {\mathop{\kern 0.1em\vrule width 0.3em height 0.697ex depth -0.604ex
                  \kern -0.4em \intop}\nolimits_{#1}}%
          {\mathop{\kern 0.1em\vrule width 0.3em height 0.697ex depth -0.604ex
                  \kern -0.4em \intop}\nolimits_{#1}}}
\newcommand{\aveint}[2]{\mathchoice%
          {\mathop{\kern 0.2em\vrule width 0.6em height 0.69678ex depth -0.58065ex
                  \kern -0.8em \intop}\nolimits_{\kern -0.45em#1}^{#2}}%
          {\mathop{\kern 0.1em\vrule width 0.5em height 0.69678ex depth -0.60387ex
                  \kern -0.6em \intop}\nolimits_{#1}^{#2}}%
          {\mathop{\kern 0.1em\vrule width 0.5em height 0.69678ex depth -0.60387ex
                  \kern -0.6em \intop}\nolimits_{#1}^{#2}}%
          {\mathop{\kern 0.1em\vrule width 0.5em height 0.69678ex depth -0.60387ex
                  \kern -0.6em \intop}\nolimits_{#1}^{#2}}}
\newtoks\by
\newtoks\paper
\newtoks\book
\newtoks\jour
\newtoks\yr
\newtoks\pages
\newtoks\vol
\newtoks\publ
\def\name[#1, #2]{#1 #2}
\def\ota{{\hbox{\bf ???}}}
\def\cLear{\by=\ota\paper=\ota\book=\ota\jour=\ota\yr=\ota
\pages=\ota\vol=\ota\publ=\ota}
\def\endpaper{\the\by, \textit{\the\paper},
{\the\jour} \textbf{\the\vol} (\the\yr), \the\pages.\cLear}
\def\endbook{\the\by, \textit{\the\book},
\the\publ, \the\yr.\cLear}
\def\endpap{\the\by, \textit{\the\paper}, \the\jour.\cLear}
\def\endproc{\the\by, \textit{\the\paper}, \the\book, \the\publ,
\the\yr, \the\pages.\cLear}
\begin{document}

\maketitle

\begin{abstract}
The purpose of this paper is to prove new fine regularity results for nonlocal drift-diffusion equations via pointwise potential estimates. Our analysis requires only minimal assumptions on the divergence free drift term, enabling us to include drifts of critical order belonging merely to BMO. In particular, our results allow to derive new estimates for the dissipative surface quasi-geostrophic equation.
\end{abstract}

\tableofcontents

\section{Introduction}	
The goal of this paper is to derive new regularity results for drift-diffusion equations with a critical dissipative term in $\mathbb R^d$. More precisely, we establish fine estimates for solutions to such equations by means of parabolic Riesz potentials that are independent of the vector field in the drift term, except for its {\sl a priori} regularity (see \cite{kiselev} for a survey and references therein). The framework of this article is very general and, in particular, it yields new estimates for the critical dissipative SQG equation. Such equation has been introduced in \cite{CMT} as a toy model for the regularity of Navier-Stokes and several seminal works in the last years have led to major breakthroughs on this topic, see \cite{CaffaVasseur,KNV,KN}. Moreover, let us point out that some of the fine regularity estimates obtained in this article are even new for the standard fractional heat equation.

For $d \geq 2$, we consider the following general drift-diffusion equation
\begin{align}\label{eq1}
&\partial_t u + (b,\nabla u)+\mathcal{L}_t u = \mu ~~\text{in}~~I \times \Omega \subset \mathbb{R} \times \mathbb{R}^d =\mathbb{R}^{d+1},
\end{align}
where $\mu$ belongs to the set of finite measures in $\mathbb R^{d+1}$, which we denote by $\mathcal M (\mathbb R^{d+1})$, while $b : \mathbb{R} \times \mathbb{R}^d \to \mathbb{R}^d$ is a vector field that belongs to {either $L^\infty(\mathbb{R}^{d+1})$ or $L^\infty(\mathbb{R};\text{BMO}(\mathbb R^{d}))$}, and satisfies $\operatorname{div}(b(t))=0$ almost everywhere in time $t \in \R$. \\\\

The integral operator $\mathcal{L}_t$ ($t \in \mathbb{R}$) is defined by  
\begin{align}
\label{eq:L}
\mathcal{L}_t  u(x) = \text{P.V.}\int_{\mathbb{R}^d}(u(x)-u(y)){K}_t(x,y) \d y,
\end{align}
where the kernel $ {K}_t: \mathbb{R} \times \mathbb{R}^d \times \mathbb{R}^d \to [0, \infty), ~~ (t,x, y) \mapsto{K}_t(x, y) $ 
is assumed to satisfy the following conditions for a given $s \in [1/2,1)$:
\begin{itemize}
	\item[(i)] ${K}_t$ is a measurable function;
	\item[(ii)] for every $t \in \mathbb{R}$, ${K}_t$ is symmetric which means that
	\[
	{K}_t(x, y) = {K}_t(y, x), \quad \text{for a.e.} ~ (t, x, y) \in \mathbb{R} \times \mathbb{R}^d \times \mathbb{R}^d;
	\]
	\item[(iii)] there exists a constant $\Lambda \ge 1$ such that
	\[
	\Lambda^{-1} \leq |x - y|^{d + 2s} {K}_t(x, y) \leq \Lambda, ~~ \text{for a.e.} ~ (t, x, y) \in \mathbb{R} \times \mathbb{R}^d \times \mathbb{R}^d.
	\]
\end{itemize} 

An example of an operator $\mathcal L_t$ is of course given by the Fourier multiplier $(-\Delta)^s$. In this case equation \eqref{eq1} refers to a standard class of drift-diffusion equations. In the case $d=2$, $s=\frac12$, when $b$ is given by a rotation of the vectorial Riesz transform $b = \nabla^\perp (-\Delta)^{-\frac12} u$, one obtains the dissipative surface quasi-geostrophic equation (SQG equation). We note at this point that in the case $s=\frac12$, the drift term has the same order as the diffusion term given by the nonlocal operator in \eqref{eq1}, so that the problem becomes \emph{critical} instead of subcritical, leading to substantial additional technical difficulties compared to parabolic nonlocal equations without drift.

The main results of the present paper establish potential-theoretic estimates of (weak) solutions $u$ to \eqref{eq1} in terms of parabolic Riesz potentials of the measure $\mu$ (see \autoref{thm:PE} and \autoref{BMOdata}). In this context, we extend similar results previously obtained in the nonlocal elliptic setting (see \cite{KMS1,KMS2}) to the setting of time-dependent drift-diffusion equations, including in particular the dissipative SQG equation.
Moreover, thanks to the known mapping properties of the potentials, this framework provides new estimates in terms of finer scales of function spaces. This philosophy of obtaining fine regularity estimates via potentials follows a by now long tradition started in the context of local elliptic and parabolic equations, see e.g.\ \cite{KM94,TWAJM,Min,duzaarMin,Cianchi,KuMi,KuMiV,BCDKS,BYP,DongJEMS,DFJMPA}. \\
Let us now explain and discuss our main results in detail. We consider the cases $b \in L^{\infty}(\R^{d+1})$ and $b \in L^{\infty}(\R ; \text{BMO}(\R^d))$ separately.

\subsection*{Main results for bounded drifts}

A crucial step in the construction of potential estimates is to establish suitably localized H\"older estimates for weak solutions to the homogeneous problem corresponding to \eqref{eq1} in parabolic cylinders. In our case, the homogeneous problem is given by 
\begin{align}
\label{eq:PDE-hom}
\partial_t v + (b,\nabla v) + \mathcal L_t v = 0 ~~ \text{ in } Q_{r}(t_0,x_0),
\end{align}
where $Q_r(t_0,x_0):= I_r^{\ominus}(t_0) \times B_r(x_0)$ denotes the parabolic cylinder and $I_r^{\ominus}(t_0) := (t_0-r^{2s},t_0)$.

Weak solutions to the homogeneous problem \eqref{eq:PDE-hom} and the inhomogeneous problem \eqref{eq1} are defined as follows:

\begin{definition}[Weak solutions] Let $\mu \in \mathcal{M}(\mathbb{R}^{d+1})$. A function \newline $u\in L^\infty(I;L^2(\Omega))\cap L^2(I;H^{s}(\Omega)) \cap L^2(I;L^1_{2s}(\mathbb{R}^d))$ is a weak subsolution to \eqref{eq1}, if 
	for all $(t_1,t_2) \subset I$ and nonnegative test functions $\varphi\in L^2((t_1,t_2); H^s(\R^d)) \cap C^1((t_1,t_2);L^2(\Omega))$ with $\varphi \equiv 0$ in $(t_1,t_2) \times \R^d \setminus \Omega$
	we have
	\begin{align*}
		& \quad -\int_{t_1}^{t_2} \int_{\Omega} u\left[\partial_t\varphi + (b,\nabla \varphi) \right]\d x \d t + \int_{\Omega} u(t_2,x)\varphi(t_2,x) \d x - \int_{\Omega} u(t_1,x)\varphi(t_1,x) \d x \\ & \quad + \int_{t_1}^{t_2} \int_{\mathbb{R}^d} \int_{\mathbb{R}^d} (u(t,x)-u(t,y))(\varphi(t,x)-\varphi(t,y)) K_t(x,y) \d y \d x \d t \\& \le \int_{t_1}^{t_2} \int_{\Omega} \varphi \d\mu .
	\end{align*}
	Here, $L^1_{2s}(\R^d)$ denotes the nonlocal tail space
	$$L^1_{2s}(\mathbb{R}^d):= \left \{v \in L^1_{\loc}(\mathbb{R}^d) \mathrel{\Big|} \int_{\mathbb{R}^d} \frac{|v(y)|}{(1+|y|)^{d+2s}} \d y < \infty \right \}.$$
A function $u$ is called a weak supersolution if the previous estimate holds true for every nonpositive test function $\phi$, and it is called a weak solution if it is a weak subsolution and a weak supersolution.\\
$u$ is called a weak (sub/super)solution to \eqref{eq:PDE-hom} if it is a weak (sub/super)solution to \eqref{eq1} with $\mu \equiv 0$.
\end{definition}


Next, we state the localized H\"older regularity estimate for solutions to the homogeneous problem \eqref{eq:PDE-hom} under the assumption that the vector field $b$ is bounded. 

\begin{theorem}[H\"older estimate]
	\label{prop:Holderq}
	Let $s \in [1/2,1)$ and $b \in L^\infty(\mathbb R^{d+1})$ with $\Vert b \Vert_{L^{\infty}(\R^{d+1})} \le \Lambda$. Suppose that $v$ is a weak solution to \eqref{eq:PDE-hom} in a parabolic cylinder $Q_{r}(t_0,x_0)$ for some $r > 0$. Then, there exist $\alpha \in (0,1)$ and $c > 0$ depending only on $d,s,\Lambda,q$ such that $v \in C^{\alpha}_{loc}(Q_{r}(t_0,x_0))$ and the following estimate holds true for any $q > 1$:
	\begin{align*}
		& \Vert v \Vert_{C^{\alpha}(Q_{r/2}(t_0,x_0))} \\ & \quad \leq c r^{-\alpha}  \left[\left(\dashint_{Q_{r}(t_0,x_0)} |v| \d x \d t \right) +\left(\dashint_{I_r^{\ominus}(t_0)} \tail(v(t);x_0,r)^q \d t \right)^{1/q} \right],
	\end{align*}
	where the nonlocal tail term is defined as
	\begin{align*}
\tail(v(t);x_0,r) = r^{2s} \int_{\R^d \setminus B_r(x_0)} |v(t,y)||x_0-y|^{-d-2s} \d y.
\end{align*}
\end{theorem}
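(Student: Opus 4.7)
The plan is to carry out the classical De Giorgi--Nash--Moser program, adapted to the nonlocal parabolic equation \eqref{eq:PDE-hom} with drift, in four steps: (i) a Caccioppoli-type energy estimate; (ii) a local $L^\infty$ bound for subsolutions; (iii) an oscillation-decay lemma; (iv) dyadic iteration. The decisive structural observation is that $\operatorname{div} b(t) = 0$ essentially removes the drift from the energy identities. Testing \eqref{eq:PDE-hom} against $(v-k)_+ \eta^2$ with a space-time cutoff $\eta$, integrating by parts and using the divergence-free condition reduces the drift contribution to the zeroth-order term
\[
-\int (b, \nabla \eta)\, \eta\, (v-k)_+^2 \d x,
\]
which is controlled by $\|b\|_{L^\infty}$ and $\|\nabla \eta\|_{L^\infty}$. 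Consequently, the Caccioppoli inequality for \eqref{eq:PDE-hom} coincides with the driftless case up to harmless lower-order perturbations.

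By translation and the parabolic rescaling $v(t,x) \mapsto v(r^{2s}t, rx)$, the drift transforms to $r^{2s-1} b$, which remains bounded for $r \leq 1$ since $s \geq 1/2$, reducing matters to $r = 1$. A De Giorgi iteration on the levels of $v_+$ using Caccioppoli together with the fractional Sobolev embedding $H^s \hookrightarrow L^{2d/(d-2s)}$ then yields a local $L^\infty$ bound of the form
\[
\sup_{Q_{1/2}} |v| \leq C \Bigl[\,\dashint_{Q_1} |v| \d x \d t + \Bigl(\dashint_{I_1^{\ominus}(0)} \tail(v(t); 0, 1)^q \d t\Bigr)^{1/q}\Bigr].
\]
The nonlocal tail appears because the long-range contribution $\int_{\mathbb R^d \setminus B_1} v(t,y) K_t(x,y) \d y$ cannot be absorbed by the Dirichlet form and has to be carried through. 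The flexibility of using the $L^q$-in-time tail for any $q > 1$ rather than $L^\infty$-in-time comes from the fact that the iteration couples the tail only through a time-integrated quantity; the passage from an $L^p$-mean to an $L^1$-mean follows from a standard Giaquinta--Giusti covering argument.

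For the oscillation decay, I would show that there exists $\lambda \in (0,1)$ such that
\[
\oscs_{Q_{r/2}} v \leq \lambda \oscs_{Q_r} v + C \cdot (\textnormal{tail contribution}),
\]
via De Giorgi's second lemma (a nonnegative subsolution bounded by $1$ whose set $\{v > 1/2\}$ has small measure is pointwise below $1 - \eta$ on a smaller cylinder) applied symmetrically to $v$ and $-v$, together with an isoperimetric-type lemma connecting the measure of intermediate level sets to the $H^s$-seminorm. Once more the divergence-freeness of $b$ renders the drift invisible to both ingredients.

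The technical heart of the argument, and the step I expect to be the main obstacle, is the final dyadic iteration on $Q_{r_j}$ with $r_j = 2^{-j} r$. One must re-estimate the tail at each scale: $\tail(v(t); x_0, r_j)$ decomposes as the tail at the original scale $r$ plus a telescoping sum of annular contributions $\int_{B_{r_k} \setminus B_{r_{k+1}}} |v|$ that are in turn controlled by the oscillations already established at the intermediate scales $r_k$, $k < j$. These contributions form a geometric series that converges only if the Hölder exponent $\alpha$ is chosen sufficiently small; keeping this bookkeeping consistent with the $L^q$-in-time tail while absorbing the drift-generated lower-order terms at every scale is where the real care lies.
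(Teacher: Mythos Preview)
Your overall strategy coincides with the paper's: Caccioppoli inequality with the drift reduced to a harmless lower-order term via $\operatorname{div} b=0$, a De Giorgi iteration for local boundedness, an oscillation-decay step, and dyadic iteration with careful tail bookkeeping. The handling of the drift in the energy estimate and the observation that the rescaling $r^{2s-1}b$ stays bounded for $s\ge 1/2$ are exactly the structural points the paper uses (the paper keeps the scale explicit rather than normalising to $r=1$, but this is cosmetic).

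The one substantive difference is in the oscillation-decay mechanism. You propose De Giorgi's second lemma plus an isoperimetric-type estimate linking intermediate level sets to the $H^s$-seminorm. The paper instead follows the approach of Liao: the nonlocal Caccioppoli identity \eqref{eq:Cacc2} carries on its left-hand side the ``good'' cross-term
\[
r^{-d-2s}\int\!\!\int_{B_r\times B_r} w_+(t,x)\,w_-(t,y)\,\d y\,\d x,
\]
and this alone yields a measure-shrinking lemma (Lemma~\ref{lemma:growth-lemma4}) without any isoperimetric step. Combined with a forward-in-time propagation of positivity (Lemma~\ref{lemma:growth-lemma3}) and a standard De Giorgi shrinking (Lemma~\ref{lemma:growth-lemma1}), this gives the growth lemma directly. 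The paper notes that the nonlocal $\log(u)$-lemma appears to fail in the presence of the drift, which is one reason to avoid Moser-type routes; your isoperimetric proposal is distinct from the log-lemma and should in principle go through, but the cross-term approach is more direct here and dovetails neatly with the $L^q$-in-time tail smallness conditions that drive the iteration.
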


The previous result is already new in contrast to previous H\"older estimates for nonlocal drift-diffusion equations (see e.g.\ \cite{CaffaVasseur}, \cite{sireDCDS}, \cite{delgadinoSmith}) because of two reasons. First, our estimate is truly local in nature in the sense that it does not require $v$ to solve an equation in the full space. Second, our estimate of the H\"older norm only contains tail terms which are merely required to belong to $L^q$ in time for some $q>1$. Since this quantity is always finite for any weak solution, our result does not impose any artificial restriction on the behavior of solutions at infinity. Note that Theorem \ref{prop:Holderq} does not hold with $q=1$ due to \cite[Example 5.2]{KaWe23}.

In comparison to previous results on H\"older regularity for nonlocal diffusion equations, our result can be regarded as an extension of \cite{KaWe23} since it allows for subcritical and critical drift terms. For previous results on nonlocal elliptic and parabolic equations without drift see for instance \cite{KassCalcVar,CCV,FelsKass,DKP,Coz17,KaWe22a}. Finally, let us mention the articles \cite{SilvestreAdv, SilvestrePisa, SilvestreIndiana}, where H\"older estimates have been obtained for nonlocal drift-diffusion equations in case $\mathcal{L}_t = (-\Delta)^s$ using non-variational techniques.\\

We proceed by introducing parabolic Riesz potentials:

\begin{definition}
Let $\mu \in \mathcal M(\mathbb R^{d+1})$ be a finite Radon measure with finite total mass and consider a parabolic cylinder $Q_\rho(t_0,x_0)=(t_0-\rho^{2s},t_0) \times B_\rho(x_0)$. Then we denote by $\mathcal {P}^R_{a}[\mu]$ the parabolic Riesz potential of $\mu$ with order $a\in (0,d+2s)$ which is defined as follows
\begin{align*}
\mathcal {P}^R_{a}[\mu](t_0,x_0)=\int_{0}^{R}\frac{|\mu|(Q_\rho(t_0,x_0))}{\rho^{d+2s-a}}\frac{d\rho}{\rho}. 
\end{align*}
In particular, one has 
\begin{align*}
	\mathcal {P}^R_{2s}[\mu](t_0,x_0)=\int_{0}^{R}\frac{|\mu|(Q_\rho(t_0,x_0))}{\rho^{d}}\frac{d\rho}{\rho}. 
\end{align*}
\end{definition}

\begin{remark}
Note that while the order $2s$ of the parabolic potential does not explicitly appear in its definition, it is intrinsically encoded via the size of the parabolic cylinder $Q_\rho(t_0,x_0)$.
\end{remark}

We are now in position to state our first main result, a zero-order potential estimate for solutions to \eqref{eq1} under the assumption that the drift $b$ is bounded uniformly in space and time, which in particular includes solutions to the SQG equation.  

\begin{theorem}[Potential estimate] \label{thm:PE}
	Let $s \in [1/2,1)$, $\mu \in \mathcal{M}(\mathbb{R}^{d+1})$,  assume that $b \in L^\infty(\mathbb R^{d+1})$ with $\Vert b \Vert_{L^{\infty}(\R^{d+1})} \le \Lambda$, and that $u$ is a weak solution of \eqref{eq1} in $I \times \Omega \subset \mathbb{R}^{d+1}$.    Then for almost all $t_0 \in I$, $x_0 \in \Omega$, any $q>1$, and any $R >0$ such that $Q_R(t_0,x_0) \subset I \times \Omega$, we have the estimate
	\begin{equation} \label{eq:PE}
		\begin{aligned}
			|u(t_0,x_0)| & \leq c \Bigg [\left(\dashint_{Q_{R}(t_0,x_0)} |u|^q \d x \d t \right)^{1/q} +\left(\dashint_{I_R^{\ominus}(t_0)} \tail(u(t);x_0,R)^q \d t \right)^{1/q} \\ & \quad + \mathcal {P}^{R}_{2s}[\mu](t_0,x_0) \Bigg ],
		\end{aligned}
	\end{equation}
	where $c$ depends only on $d,s,\Lambda,q$.
\end{theorem}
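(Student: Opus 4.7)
The strategy is a Kuusi--Mingione-type pointwise iteration along a geometrically shrinking chain of parabolic cylinders centred at $(t_0,x_0)$, where the homogeneous piece is controlled via the H\"older estimate of Theorem~\ref{prop:Holderq} and the inhomogeneous piece via an $L^q$-comparison estimate involving $|\mu|(Q_j)$. Fix an almost-every point $(t_0,x_0)$ which is a $q$-Lebesgue point of $u$, fix $\sigma\in(0,1)$ to be chosen small in terms of $d,s,\Lambda,q$, and set $r_j := \sigma^j R$, $Q_j := Q_{r_j}(t_0,x_0)$, $I_j := I_{r_j}^{\ominus}(t_0)$. Introduce the averaged quantity
\begin{align*}
A_j := \left(\dashint_{Q_j}|u|^q\,\d x\,\d t\right)^{1/q} + \left(\dashint_{I_j}\tail(u(t);x_0,r_j)^q\,\d t\right)^{1/q},
\end{align*}
so that $|u(t_0,x_0)| \leq \liminf_{j\to\infty} A_j$ at the chosen point. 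The core of the proof is to establish a recursion $A_{j+1} \leq \tfrac{1}{2}A_j + C\,|\mu|(Q_j)/r_j^d$ and then sum the resulting geometric series.

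\textbf{Comparison step.} On each $Q_j$, let $v_j$ be the unique weak solution to the homogeneous Cauchy--Dirichlet problem \eqref{eq:PDE-hom} in $Q_j$ with initial-exterior data equal to $u$, and set $w_j := u - v_j$; then $w_j$ solves \eqref{eq1} in $Q_j$ with zero initial-exterior data. Testing the equation for $w_j$ against truncations $T_k(w_j)$ and exploiting the divergence-free condition $\dive(b(t))=0$ --- which produces the crucial cancellation $\int (b\cdot\nabla\psi)\,\psi\,\d x = 0$ and thereby eliminates the critical drift term from the energy identity, irrespective of $\|b\|_{L^{\infty}}$ --- a Boccardo--Gallou\"et truncation combined with the standard nonlocal parabolic Sobolev/embedding machinery yields
\begin{align*}
\left(\dashint_{Q_j}|w_j|^q\,\d x\,\d t\right)^{1/q} \leq c\,\frac{|\mu|(Q_j)}{r_j^d}
\end{align*}
for every $q>1$, with $c = c(d,s,\Lambda,q)$.

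\textbf{Decay and iteration.} Applying Theorem~\ref{prop:Holderq} to $v_j$ on $Q_j$ gives H\"older control on the subcylinder $Q_{j+1}\subset Q_{r_j/2}(t_0,x_0)$. Combining this with the standard splitting of the tail at scale $r_{j+1}$ into the annular piece $B_{r_j}\setminus B_{r_{j+1}}$ (absorbed by the $L^q$-average of $v_j$ on $Q_j$) and the far piece (bounded by $c\sigma^{2s}\,\tail(v_j;x_0,r_j)$), one obtains for $v_j$ an inequality of the form
\begin{align*}
\left(\dashint_{Q_{j+1}}|v_j|^q\right)^{1/q} \!+\! \left(\dashint_{I_{j+1}}\tail(v_j;\cdot,r_{j+1})^q\right)^{1/q} \leq c_0\sigma^{\alpha}\!\left[\left(\dashint_{Q_j}|v_j|^q\right)^{1/q} \!+\! \left(\dashint_{I_j}\tail(v_j;\cdot,r_j)^q\right)^{1/q}\right].
\end{align*}
Writing $u = v_j + w_j$, applying the triangle inequality together with the comparison bound on $w_j$, and fixing $\sigma$ so small that $c_0\sigma^{\alpha}\leq 1/2$, we arrive at $A_{j+1} \leq \tfrac{1}{2}A_j + C|\mu|(Q_j)/r_j^d$. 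Iterating produces $A_j \leq 2^{-j}A_0 + C\sum_{k<j}2^{-(j-k)}|\mu|(Q_k)/r_k^d$, and a routine comparison of this dyadic sum with the continuous integral defining $\mathcal P^R_{2s}[\mu](t_0,x_0)$ --- using $|\mu|(Q_\rho)$ is nondecreasing in $\rho$ --- yields the desired estimate upon letting $j\to\infty$.

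\textbf{Main obstacle.} The principal technical difficulty is the comparison estimate for $w_j$ in the presence of a \emph{critical} drift: when $s=1/2$ the term $(b,\nabla w_j)$ scales identically to $\mathcal L_t w_j$, so only the divergence-free structure of $b$ can make the estimate close with a constant independent of $\|b\|_{L^{\infty}}$. A secondary subtlety is that Theorem~\ref{prop:Holderq} supplies tail control in $L^q_t$ rather than $L^{\infty}_t$, so the tail recursion has to be propagated in the $L^q_t$-norm via Minkowski's inequality, but this is structurally compatible with the above dyadic splitting.
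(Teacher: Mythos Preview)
Your decay step contains a genuine error: the inequality you claim for $v_j$,
\[
\left(\dashint_{Q_{j+1}}|v_j|^q\right)^{1/q} + \left(\dashint_{I_{j+1}}\tail(v_j;\cdot,r_{j+1})^q\right)^{1/q} \leq c_0\sigma^{\alpha}\Big[\cdots\Big],
\]
is false. The H\"older estimate of Theorem~\ref{prop:Holderq} controls the \emph{oscillation} of $v_j$, not its size; a nonzero constant $v_j\equiv M$ is a perfectly good homogeneous solution for which both sides of your claimed inequality equal (roughly) $|M|$, with no factor $\sigma^{\alpha}$. Consequently the recursion $A_{j+1}\le \tfrac12 A_j + C|\mu|(Q_j)/r_j^d$ cannot hold for your choice of $A_j$ (again test with $u$ constant, $\mu=0$). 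The annular tail splitting you describe is correct as far as it goes, but it only delivers decay for $\tail(v_j-(v_j)_{Q_j};\cdot,r_{j+1})$, not for $\tail(v_j;\cdot,r_{j+1})$.

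The paper repairs this by iterating not $A_j$ but the \emph{excess}
\[
E(u,t_0,x_0,r)=\Big(\dashint_{I_r^{\ominus}}\Big[\dashint_{B_r}|u-(u)_{Q_r}|\Big]^q\Big)^{1/q}+r^{-2s/q}\|\tail(u-(u)_{Q_r};x_0,r)\|_{L^q(I_r^{\ominus})},
\]
which vanishes on constants and for which the H\"older estimate genuinely yields geometric decay (Lemma~\ref{lemma:OscDecIntro}). One then recovers the pointwise value by telescoping the means,
\[
|(u)_{Q_{r_{l+1}}}|\le\sum_{i=0}^{l}|(u)_{Q_{r_{i+1}}}-(u)_{Q_{r_i}}|+|(u)_{Q_R}|\le C\sum_i E_i + |(u)_{Q_R}|,
\]
and applying Lebesgue differentiation. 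This extra telescoping step is exactly what is missing from your scheme. As a secondary remark, the comparison estimate used in the paper (Lemma~\ref{thm:comparison-SQG}) is an $L^{\infty}_tL^1_x$ bound obtained by testing against $\pm(1\wedge w_{\pm}/\varepsilon)$ rather than a Boccardo--Gallou\"et iteration; this is both simpler and stronger than the $L^q$ bound you invoke, and it is what makes the excess of $w_j$ controllable at \emph{every} smaller scale with the right blow-up rate.
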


\begin{remark}[Equations without drift]
\label{remark:without-drift}
In case $b=0$, that is, when \eqref{eq1} is a parabolic nonlocal diffusion equation without drift, the potential estimate \autoref{thm:PE} remains valid for the full range $s \in (0,1)$ without any changes in the proof (see also Remark \ref{remark:proof-without-drift}).
\end{remark}

Our main result \autoref{thm:PE} extends corresponding potential estimates in the local parabolic case (see e.g.\ \cite{duzaarMin}) and in the nonlocal elliptic case (see e.g.\ \cite{KMS1,KMS2,KNS,DNCZ}) to the setting of nonlocal drift-diffusion equations. Moreover, it seems to be new already for the fractional heat equation with $\mathcal{L}_t = (-\Delta)^s$, $s \in (0,1)$, and $b \equiv 0$ (see Remark \ref{remark:without-drift}).

\begin{remark}[SOLA]
	While for the sake of simplicity we work with the above notion of energy-type weak solutions, in view of standard approximation arguments we expect our main results to remain valid for distributional solutions which can be approximated by weak solutions to regularized equations in a suitable fashion. Solutions of this type are often called SOLA (=\hspace{0.1mm} solutions obtained by limiting approximations) and can usually be shown to exist under general measure data. See for instance \cite{KMS1,KNS} for a precise solution concept of this type in the nonlocal elliptic setting and \cite{BDG,duzaarMin,KuMi} for corresponding ones in the local parabolic setting.
\end{remark}

Our main technical tool for proving Theorem \ref{thm:PE} is the following excess-decay lemma, which in particular encodes the zero-order regularity properties
of solutions to \eqref{eq1} in a precise way.

\begin{lemma}[Excess decay] \label{lemma:OscDecIntro} 
	Let $s \in [1/2,1)$, $R>0$, $t_0 \in I$, $x_0 \in \mathbb{R}^n$, $q>1$, $\mu \in \mathcal{M}(\mathbb{R}^{d+1})$, and assume that 
	$b \in L^\infty(\mathbb R^{d+1})$ with $\Vert b \Vert_{L^{\infty}(\R^{d+1})} \le \Lambda$. In addition, let $\alpha \in (0,s(1-1/q))$ be given by Theorem \ref{prop:Holderq}. Let $u$ be a weak solution to \eqref{eq1} in $I \times \Omega$ and assume that $Q_{2R}(t_0,x_0) \subset I \times \Omega$.
	Then for any integer $m \geq 1$, we have
	\begin{equation} \label{eq:oscdecay}
		\begin{aligned}
			E(u,t_0,x_0,2^{-m} R) & \leq C_0 2^{-\alpha m} E(u,t_0,x_0,R) \\ & \quad + C_0 2^{((d+2s)/q)m} R^{-d} |\mu|(Q_{R}(t_0,x_0)),
		\end{aligned}
	\end{equation}
	where $C_0 \geq 1$ depends only on $d,s,\Lambda,\alpha,q$ and for any $r>0$,
	\begin{align*}
		E(u,t_0,x_0,r)& := \left (\dashint_{I_{r}^{\ominus}(t_0)} \left [ \dashint_{B_{r}(x_0)} |u - (u)_{Q_{r}(t_0,x_0)} | \d x \right ]^q \d t \right )^{1/q} \\ & \quad+ r^{-2s/q} ||\tail(u(\cdot)-(u)_{Q_{r}(t_0,x_0)};x_0,r)||_{L^q(I_r^{\ominus}(t_0))} .
	\end{align*}
\end{lemma}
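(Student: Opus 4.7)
The plan is a comparison argument. Decompose $u = v + w$, where $v$ solves the homogeneous equation on $Q_R(t_0,x_0)$ with $v \equiv u$ on the parabolic complement of $Q_R$, and $w := u - v$ carries the measure source. Existence and uniqueness of such a $v$ (with $v - u$ vanishing outside $B_R$ and at the initial time) follow by standard Galerkin arguments; the divergence-free condition $\operatorname{div} b(t) = 0$ ensures coercivity of the associated bilinear form despite the critical drift.

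For the regular component, observe that $v - (u)_{Q_R(t_0,x_0)}$ still solves the homogeneous equation, so Theorem \ref{prop:Holderq} applied on $Q_R$ yields
\[
[v]_{C^\alpha(Q_{R/2}(t_0,x_0))} \leq cR^{-\alpha}\Bigl[\dashint_{Q_R}|v - (u)_{Q_R}|\,\d x\,\d t + \Bigl(\dashint_{I_R^\ominus(t_0)}\tail(v(t)-(u)_{Q_R};x_0,R)^q\,\d t\Bigr)^{1/q}\Bigr].
\]
Since $v \equiv u$ outside $B_R$ and on the initial slice, the tail piece above coincides with that of $u - (u)_{Q_R}$, and the $L^1$-mean is controlled via the triangle inequality by $\dashint_{Q_R}|u - (u)_{Q_R}| + \dashint_{Q_R}|w|$. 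Converting this H\"older control down to the scale $r := 2^{-m} R \leq R/2$ then produces $E(v, t_0, x_0, r) \lesssim 2^{-\alpha m}\bigl[E(u, t_0, x_0, R) + \dashint_{Q_R}|w|\,\d x\,\d t\bigr]$, which delivers the first term on the right-hand side of \eqref{eq:oscdecay}.

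For the singular component I will invoke the measure-data comparison
\[
\Bigl(\dashint_{Q_R}|w|^q\,\d x\,\d t\Bigr)^{1/q} \leq c\, R^{-d}|\mu|(Q_R),
\]
obtained by duality/truncation arguments applied to the Cauchy--Dirichlet problem $\partial_t w + (b,\nabla w) + \mathcal{L}_t w = \mu$ with vanishing initial and exterior data. Since $w$ is supported in $B_R \times I_R^\ominus(t_0)$, Jensen's inequality gives $\bigl(\dashint_{I_r^\ominus}[\dashint_{B_r}|w - (w)_{Q_r}|]^q\bigr)^{1/q} \leq 2\cdot 2^{m(d+2s)/q}\bigl(\dashint_{Q_R}|w|^q\bigr)^{1/q}$, while a dyadic decomposition of $B_R\setminus B_r$ into annuli $B_{2^{k+1}r}\setminus B_{2^k r}$ combined with H\"older's inequality in space produces the matching factor $2^{m(d+2s)/q}$ for the tail component of $E(w, r)$. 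Combining these with the triangle inequality $E(u, r) \leq E(v, r) + E(w, r)$ closes the argument.

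The two main obstacles are: (i) the $L^q$-comparison for $w$ under the critical divergence-free drift, which requires exploiting $\int (b,\nabla\phi)\phi\,\d x = 0$ in the energy inequality applied to truncations of $w$, followed by a Stampacchia/Boccardo--Gallou\"et-type iteration that trades $L^1$-measure data for $L^q$-integrability; and (ii) the bookkeeping of nonlocal tail contributions when translating the H\"older bound for $v$ down to scale $r$, where $\tail(v - (v)_{Q_r}; x_0, r)$ must be split into an interior piece on $B_{R/2}\setminus B_r$ (absorbed by $[v]_{C^\alpha}$ via the convergent integral $\int_r^{R/2} \rho^{\alpha-2s-1}\d\rho$), an intermediate annular piece on $B_R\setminus B_{R/2}$ controlled by $R^{-d-2s}\int_{B_R}|v - (v)_{Q_r}|$, and an exterior piece on $\R^d\setminus B_R$ where $v \equiv u$ and which therefore is absorbed into the tail term in $E(u, R)$.
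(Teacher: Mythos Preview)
Your overall strategy---comparison $u = v + w$, H\"older decay for $v$ via Theorem~\ref{prop:Holderq}, and separate control of the excess of $w$---is exactly what the paper does, and your tail bookkeeping for $v$ in part (ii) (annular splitting into $B_{R/2}\setminus B_r$, $B_R\setminus B_{R/2}$, and $\R^d\setminus B_R$) matches the paper's decomposition. However, there is a genuine gap in your treatment of the singular component $w$.

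You claim a comparison estimate of the form $(\dashint_{Q_R}|w|^q)^{1/q} \leq c\,R^{-d}|\mu|(Q_R)$ for \emph{every} $q > 1$, to be obtained by a Boccardo--Gallou\"et-type iteration. This is false in general: for measure data (take $\mu$ a Dirac mass, so that $w$ is essentially the heat kernel), the solution of the Cauchy--Dirichlet problem lies in $L^q(Q_R)$ only for $q$ strictly below the critical exponent $\tfrac{d+2s}{d}$, and no such estimate can hold for all $q>1$ as the lemma requires. The Boccardo--Gallou\"et mechanism trades measure data for \emph{subcritical} integrability, not arbitrary $L^q$-integrability.

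The paper avoids this entirely by proving instead the much simpler $L^\infty_t L^1_x$ comparison (Lemma~\ref{thm:comparison-SQG})
\[
\sup_{t \in I_R^\ominus(t_0)} \dashint_{B_R(x_0)} |w(t,x)|\,\d x \;\leq\; c\, R^{-d}|\mu|(Q_R(t_0,x_0)),
\]
obtained by testing the equation for $w$ with the approximate sign functions $\pm(1 \wedge w_\pm/\varepsilon)$ and letting $\varepsilon \to 0$; the drift term drops out via $\operatorname{div} b = 0$, the nonlocal term has a sign, and no iteration is needed. Since $w \equiv 0$ outside $B_R$, this pointwise-in-time $L^1$ bound already controls both pieces of $E(w,2^{-m}R)$: the local piece by $\dashint_{B_r}|w(t)| \le (R/r)^d \dashint_{B_R}|w(t)|$, and the tail piece by integrating $|w(t,\cdot)|$ over $B_R\setminus B_r$ against $|x_0-y|^{-d-2s}\le r^{-d-2s}$. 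The upshot is that the $q$-exponent in the excess never has to be passed to $w$ in space; it is only the $L^q$-in-time structure of the tail norm that matters, and that is harmless once one has a uniform-in-time bound.
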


The proof of the previous parabolic potential estimate is nonlinear in nature, not because the equation \eqref{eq1} is a nonlinear PDE, but as for the techniques we use, which are reminiscent of the works \cite{duzaarMin,KMS2}. In particular, we believe that our approach has the potential to be extended to obtaining similar results for nonlinear generalizations of the equation \eqref{eq1}. \\

Let us end this section by providing an alternative perspective. In this paper, we also derive upper estimates for the fundamental solution of the operator $\partial_t + (b,\nabla \cdot) +\mathcal L_t$ (heat kernel). Such estimates could in principle also be used to derive zero-order potential estimates for solutions of \eqref{eq1}. 

\begin{theorem}[Upper heat kernel estimate]
	\label{thm:uhkeintro}
	Let $s \in [1/2,1)$ and assume that 
	$b \in L^\infty(\mathbb R^{d+1})$ with $\Vert b \Vert_{L^{\infty}(\R^{d+1})} \le \Lambda$. Let $0 \le \eta \le t \le T$ and let $p$ be the heat kernel associated to the operator $\partial_t + (b,\nabla \cdot) +\mathcal L_t$ defined on $\mathbb R^d$. Then the following holds
	\begin{align}
		\label{eq:uhkeintro}
		p(\eta,x;t,y) \le c \frac{t-\eta}{\left(|x-y|+|t-\eta|^{\frac{1}{2s}}\right)^{d+2s}} \quad \forall x,y \in \R^d
	\end{align}
	for some constant $c > 0$, depending only on $d,s,\Lambda,T$.
\end{theorem}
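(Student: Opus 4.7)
I would split the estimate along $|x-y|\asymp(t-\eta)^{1/(2s)}$ and argue differently in each regime before patching. In the near-diagonal regime the right-hand side of \eqref{eq:uhkeintro} is comparable to $(t-\eta)^{-d/(2s)}$, so a pure ultracontractive bound suffices. In the far-field regime the polynomial tail $(t-\eta)|x-y|^{-d-2s}$ is genuinely nonlocal and would be produced by a Meyer-type truncation of $\mathcal{L}_t$ combined with a weighted (Davies) argument for the truncated heat kernel.

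\textbf{Step 1 (on-diagonal estimate via Nash).} Given nonnegative $f\in L^1\cap L^2$, set $u(\tau,y)=\int p(\eta,x;\tau,y)f(x)\,dx$. Testing $\partial_\tau u+(b,\nabla u)+\mathcal{L}_\tau u=0$ against $u$, the drift integrates to zero because $\operatorname{div}b=0$, leaving
\[
\tfrac12\tfrac{d}{d\tau}\|u(\tau)\|_{L^2}^2+\langle u,\mathcal{L}_\tau u\rangle=0.
\]
Condition (iii) gives $\langle u,\mathcal{L}_\tau u\rangle\gtrsim[u(\tau)]_{\dot H^s}^2$, and combining with the Nash inequality $\|u\|_{L^2}^{2+4s/d}\le C[u]_{\dot H^s}^2\|u\|_{L^1}^{4s/d}$ and $L^1$-mass conservation (again thanks to $\operatorname{div}b=0$), a standard ODE argument gives $\|u(\tau)\|_{L^\infty}\le c(\tau-\eta)^{-d/(2s)}\|f\|_{L^1}$. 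By duality and density this yields $p(\eta,x;t,y)\le c(t-\eta)^{-d/(2s)}$, which already matches \eqref{eq:uhkeintro} in the near-diagonal regime.

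\textbf{Step 2 (Meyer's decomposition for the tail).} For the far-field, fix a truncation scale $\lambda>0$ and split $K_t=K_t^\lambda+J_t^\lambda$ with $K_t^\lambda(x,y)=K_t(x,y)\mathbf 1_{|x-y|<\lambda}$, giving operators $\mathcal{L}_t^\lambda$ and $\mathcal{J}_t^\lambda$; the latter satisfies $\|\mathcal{J}_t^\lambda\|_{L^\infty\to L^\infty}\lesssim\lambda^{-2s}$ and has a kernel dominated by $c|x-y|^{-d-2s}$. Denoting by $q^\lambda$ the heat kernel of $\partial_t+b\cdot\nabla+\mathcal{L}_t^\lambda$, Duhamel's formula reads
\[
p(\eta,x;t,y)=q^\lambda(\eta,x;t,y)+\int_\eta^t\!\int_{\mathbb R^d}q^\lambda(\eta,x;\sigma,z)\bigl[\mathcal{J}_\sigma^\lambda p(\cdot;t,y)\bigr](\sigma,z)\,dz\,d\sigma,
\]
which iterates to a convergent Meyer series. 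Step 1 applies verbatim to the truncated operator, and I would derive sub-exponential tails for $q^\lambda$ via a Davies-type weighted $L^2$ argument: conjugating by $e^{\alpha\psi}$ with bounded Lipschitz $\psi$ of gradient $\lesssim 1/\lambda$ produces, on the truncated kernel, a carré-du-champ-type defect $\lesssim\alpha^2\lambda^{-2s}\|v\|_{L^2}^2$, while the drift contributes only the boundary term $\alpha\int(b\cdot\nabla\psi)v^2$ (once more because $\operatorname{div}b=0$), controlled by $\|b\|_\infty\alpha/\lambda\cdot\|v\|_{L^2}^2$. This yields
\[
q^\lambda(\eta,x;t,y)\le c(t-\eta)^{-d/(2s)}\exp\bigl(-c|x-y|/\lambda\bigr)\quad\text{for}\quad|x-y|\ge C(t-\eta)^{1/(2s)}.
\]
Substituting into the Meyer series, summing the geometric iteration, and optimizing $\lambda\sim|x-y|$ produces $p\le c(t-\eta)|x-y|^{-d-2s}$, which together with the on-diagonal bound of Step 1 completes \eqref{eq:uhkeintro}.

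\textbf{The hard part} will be the Davies weighted step in the critical case $s=1/2$, where the drift perturbation $\|b\|_\infty\alpha/\lambda$ competes on equal footing with the truncated diffusive gain $\alpha^2/\lambda$ (since $2s=1$), so one cannot absorb the drift into the diffusion at arbitrary scales. My plan for this obstacle is to carry out the weighted estimate only on short time windows $\Delta\tau\lesssim\lambda^{2s}/\Lambda^{2s}$ where $\alpha$ can be kept small enough to ensure absorption, and then to chain $O(T/\Delta\tau)$ such intervals, which is precisely the mechanism producing the $T$-dependence of the constant $c$ in the statement. The divergence-free hypothesis is indispensable throughout: it is what turns the drift, both in the Nash step and in the weighted step, into a bounded boundary-type perturbation rather than an unbounded $\operatorname{div}b$ contribution, and it is the only structural feature of $b$ that is used in a non-perturbative way.
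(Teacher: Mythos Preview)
Your overall strategy coincides with the paper's: truncate the jump kernel at a scale $\rho$, prove a weighted $L^2$ decay for the truncated heat kernel, and glue back to $p$. The paper glues via a parabolic maximum principle (their ``gluing lemma''), obtaining $p\le p^{\rho}+c(t-\eta)\rho^{-d-2s}$ directly; this is exactly what a single application of your Duhamel identity yields once you bound the integral term using $\int q^{\lambda}\,dz=\int p\,dw=1$ and $|K_t(z,w)|\mathbf 1_{|z-w|>\lambda}\le c\lambda^{-d-2s}$. Iterating to a full Meyer series is unnecessary and in fact problematic, since at the relevant scale $\lambda\sim|x-y|$ with $(t-\eta)\sim|x-y|^{2s}$ the geometric ratio $(t-\eta)\lambda^{-2s}$ is only $O(1)$.

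There is, however, a genuine gap in your bound for $q^{\lambda}$. The estimate $q^{\lambda}\le c(t-\eta)^{-d/(2s)}\exp(-c|x-y|/\lambda)$ is what the Davies method gives for \emph{fixed} $\alpha$, and it is too weak: with $\lambda\sim|x-y|$ it yields only $q^{\lambda}\lesssim(t-\eta)^{-d/(2s)}$, which in the deep far-field $|x-y|\gg(t-\eta)^{1/(2s)}$ is far larger than the target $(t-\eta)|x-y|^{-d-2s}$, and the gluing term cannot repair this. What is needed is the full Davies optimization over large $\alpha$: since jumps are truncated at $\lambda$ and $|\nabla\psi|\le 1/\lambda$, the carr\'e-du-champ defect is $\sim(e^{2\alpha}-1-2\alpha)\lambda^{-2s}$ rather than $\alpha^2\lambda^{-2s}$, and the optimal choice $\alpha\sim\tfrac12\log(\lambda^{2s}/(t-\eta))$ produces $q^{\lambda}\lesssim(t-\eta)^{-d/(2s)}\bigl((t-\eta)/\lambda^{2s}\bigr)^{c|x-y|/\lambda}$. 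This is precisely the form of the paper's truncated estimate (their Proposition~5.5, proved via an Aronson-type weight $H(\tau,x)\sim(\rho^{2s}/[\nu(2(t-\eta)-(\tau-\eta))])^{-(|x-y|/3\rho)\vee 1}$), and only with this sharper bound does the choice $\lambda=c_0|x-y|$ for a suitably small fixed $c_0$ force the exponent to equal $-(d+2s)/(2s)$, delivering the polynomial tail.

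On the critical case $s=1/2$: your chaining idea is plausible but the paper's treatment is more direct. Since $s\ge 1/2$ and $t-\eta\le T$, one has $2(t-\eta)\le c(s,T)(t-\eta)^{1/(2s)}$, so in the weight calculus the drift contribution $|\nabla H|\sim H/\rho$ is dominated by $-\partial_t H\sim H\cdot|x-y|/(\rho(t-\eta))$ once the auxiliary parameter $\nu$ in the weight is chosen large (depending on $T$). No time-splitting is required; this single absorption is the mechanism that introduces the $T$-dependence of the constant.
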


Our proof of \autoref{thm:uhkeintro} is based on linear techniques and follows a strategy going back to the celebrated work by Aronson (see \cite{Aro68}), which has recently been extended to a nonlocal setting by the last-named author (see \cite{KaWe23b}). We note that similar upper bounds for the heat kernel of drift-diffusion equations have been proved in \cite{MaMi13a}, \cite{MaMi13b} by using an adaptation of the celebrated Davies' method (see \cite{Dav87}, \cite{Dav89}), which was first applied to nonlocal problems in \cite{CKS87}.

\subsection*{Fine space-time regularity}

In the following, we mention some consequences of our nonlocal potential estimate \autoref{thm:PE}. Indeed, it is well-known (see e.g. \cite{duzaarMin,QH} and the references therein) that potential estimates lead to space-time estimates for solutions, using the mapping properties of the parabolic Riesz potential in Lorentz/Marcinkiewicz spaces. More precisely, 
we have the following fine regularity results. 

\begin{corollary}[Lorentz regularity] \label{cor:Lorentzreg}
	Let $s \in [1/2,1)$, $\mu \in \mathcal{M}(\mathbb{R}^{d+1})$,
	assume that $b \in L^\infty(\mathbb R^{d+1})$, and that $u$ is a weak solution of \eqref{eq1} in $I \times \Omega \subset \mathbb{R}^{d+1}$.
	\begin{itemize}
		\item For any $p \in \left (1,\frac{d+2s}{2s} \right )$ and any $\sigma \in (0,\infty]$, we have the implication 
		\begin{equation} \label{Lorentz2}
			\mu \in L^{p,\sigma}(I \times \Omega) \implies u \in L^{\frac{p(d+2s)}{d+2s-2sp},\sigma}_{loc}(I \times \Omega).
		\end{equation}
		\item We have
		\begin{equation} \label{Lorentz3}
			\mu \in L^{\frac{d+2s}{2s},1}(I \times \Omega) \implies u \in L^{\infty}_{loc}(I \times \Omega).
		\end{equation}
	\end{itemize}
\end{corollary}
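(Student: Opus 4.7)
The plan is to derive both parts of the corollary as direct consequences of the pointwise estimate of Theorem \ref{thm:PE}, combined with the well-known mapping properties of the Riesz potential on Lorentz spaces. Throughout the argument one regards $\mathbb{R}^{d+1}$ as a space of homogeneous type with respect to the parabolic metric $d_{\mathrm{par}}((t,x),(\tau,y)) := \max\{|x-y|, |t-\tau|^{1/(2s)}\}$, for which the balls are comparable to the cylinders $Q_\rho$ with $|Q_\rho| \sim \rho^{d+2s}$. The homogeneous dimension is thus $N := d+2s$, and $\mathcal{P}^R_{2s}[\mu]$ plays the role of the truncated Riesz potential of order $2s$ on this space.

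First, I would fix a compact subset $K \subset\subset I \times \Omega$, some $q \in (1,2)$, and a small $R_0 > 0$ with $Q_{R_0}(t_0,x_0) \subset I \times \Omega$ for every $(t_0,x_0) \in K$. By the integrability built into the definition of weak solution, $u \in L^2_{\loc}$ and $\|u(\cdot)\|_{L^1_{2s}(\mathbb{R}^d)} \in L^2(I)$, so the first two terms on the right-hand side of \eqref{eq:PE} are controlled by a single finite constant $C = C(u, R_0, K)$ uniformly in $(t_0,x_0) \in K$. Theorem \ref{thm:PE} therefore yields, for a.e.\ $(t_0,x_0) \in K$,
\[
|u(t_0,x_0)| \leq C + c\, \mathcal{P}^{R_0}_{2s}[\mu](t_0,x_0),
\]
and the desired regularity of $u$ on $K$ is reduced to the corresponding regularity of $\mathcal{P}^{R_0}_{2s}[\mu]$.

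Next, I would invoke the classical Lorentz-space boundedness of the Riesz potential of order $2s$ on the homogeneous-type space $(\mathbb{R}^{d+1}, d_{\mathrm{par}}, \mathcal{L}^{d+1})$. Writing $\mu$ as an $L^{p,\sigma}$ density and using the pointwise comparability
\[
\mathcal{P}^{R_0}_{2s}[\mu](t_0,x_0) \leq c \int_{Q_{R_0}(t_0,x_0)} \frac{d|\mu|(\tau,y)}{d_{\mathrm{par}}((t_0,x_0),(\tau,y))^{d}},
\]
one obtains for every $1 < p < N/(2s) = (d+2s)/(2s)$ and every $\sigma \in (0,\infty]$ a bound $\|\mathcal{P}^{R_0}_{2s}[\mu]\|_{L^{r,\sigma}(K)} \lesssim \|\mu\|_{L^{p,\sigma}}$ with $\tfrac{1}{r} = \tfrac{1}{p} - \tfrac{2s}{d+2s}$, proving \eqref{Lorentz2}. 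The endpoint \eqref{Lorentz3} follows from a Hedberg-type argument: splitting the defining $\rho$-integral at an optimized threshold, one estimates the near part by the parabolic Hardy--Littlewood maximal function and the far part by the $L^{(d+2s)/(2s),1}$-norm via Hölder's inequality in Lorentz spaces, and optimizes the threshold to conclude $\mathcal{P}^{R_0}_{2s}[\mu] \in L^\infty(K)$.

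The main obstacle will be verifying that the truncated parabolic Riesz potential satisfies the appropriate Lorentz mapping properties. However, this is essentially a standard adaptation of Hedberg's and O'Neil's inequalities to a space of homogeneous type, and essentially identical statements are routinely used in the parabolic potential-theoretic literature (see e.g.\ \cite{duzaarMin,QH}). In particular, the bulk of the work is already contained in Theorem \ref{thm:PE}, and the corollary follows via standard real-analytic tools.
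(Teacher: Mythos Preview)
Your proposal is correct and follows essentially the same approach as the paper: the paper does not give a detailed proof of this corollary but simply states that it follows from Theorem~\ref{thm:PE} together with the well-known mapping properties of the parabolic Riesz potential in Lorentz spaces, citing \cite{duzaarMin,QH}. Your argument supplies exactly those details---controlling the local and tail terms in \eqref{eq:PE} uniformly on compacta using the integrability built into the notion of weak solution, and then invoking the standard Lorentz bounds for the Riesz potential on the parabolic space of homogeneous type---and this is precisely what the paper has in mind.
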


\begin{remark}[Equations without drift]
\label{remark:without-driftx}
	In case $b=0$, Corollary \ref{cor:Lorentzreg} remains valid for the full range $s \in (0,1)$ by using the corresponding potential estimate that was discussed in Remark \ref{remark:without-drift}.
\end{remark}

\subsection*{Main results for BMO drifts}

Although we are mainly interested in the dissipative SQG system, we emphasize that our method to prove \autoref{thm:PE} allows to consider more general drift-diffusion equations of the form \eqref{eq1}, as far as the regularity of the vector field $b$ is concerned. We recall that the SQG system relates via a Biot-Savart law the vector field $b$ to the solution $u$. Whenever such a law is not available, the problem becomes to ask under which minimal regularity  assumption on $b$ the equation \eqref{eq1} admits a local in time suitably regular solution. It is known since the seminal work by Caffarelli and Vasseur that dissipative SQG transports BMO-vector fields; which is more general than the standing assumption from our main results stated above that the drift is in $L^\infty$ in space and time. \\

The aim of this section is to state results under the aforementioned more general BMO assumption. While the adaptation of our techniques to $\text{BMO}$-drifts is rather straightforward when $s > 1/2$, the critical case $s = 1/2$ requires us to adjust the parabolic cylinders in space to the transport that is caused by the critical drift (see Lemma \ref{lemma:BMO-rescaling}). Similar arguments were already applied in \cite{CaffaVasseur}, \cite{sireDCDS}. In fact, let $z_r(t)$ be the solution to the ODE
\begin{align*}
z_r'(t) = \dashint_{B_1} b(r t , r x + r z_r(t)) \d x, ~~ z_r(0) = 0.
\end{align*}
Then given $t_0 \in \R$, $x_0 \in \R^d$ and $R > 0$, we introduce the slanted cylinders $\tilde{Q}_R(t_0,x_0) = I_R^{\ominus}(t_0) \times \tilde{B}_R(x_0)$ by
\begin{align*}
\tilde{Q}_R(t_0,x_0) = \{ (t, x + R z_R(t/R)) : (t,x) \in Q_R(t_0,x_0 )\}.
\end{align*}
Moreover, we define $\widetilde{\tail}(v(t);x_0,R) = \tail(v(t,\cdot + R z_R(t/R)); x_0,R)$, and introduce the following {\sl slanted} parabolic potential. Let $\mu \in \mathcal M(\mathbb R^{d+1})$ be a  Radon measure with finite total mass. Then we define
\begin{align*}
\widetilde{\mathcal P}^R_{1}[\mu](t_0,x_0):=\int_{0}^{R} \frac{|\mu|(\tilde{Q}_\rho(x,t))}{\rho^{d}} \frac{d\rho}{\rho}.
\end{align*}

Note that in view of \eqref{eq:zbounds}, the slanted potential can be controlled as follows
\begin{align} 
\label{eq:slpotest}
\begin{split}
	\widetilde{\mathcal {P}}^R_{1}[\mu](t_0,x_0) &\leq \int_{0}^{R}\frac{|\mu|(I_\rho^{\ominus}(t_0) \times B_{c\rho(C_1+C_2|\log(\rho)|)}(x_0))}{\rho^{d}}\frac{d\rho}{\rho} \\
	&\le \int_{0}^{f^{-1}(R)}\frac{|\mu|(I_{\rho}^{\ominus}(t_0) \times B_{\rho}(x_0))}{f^{-1}(\rho)^{d} f'(f^{-1}(\rho))}\frac{d\rho}{f^{-1}(\rho)},
	\end{split}
\end{align} 
where $f(\rho) = c \rho(C_1 + C_2 |\log(\rho)|)$. Here $c$ depends only on $d,\Lambda,q$, while $C_1$ and $C_2$ are given as in the statement of Theorem \ref{BMOdata}. We believe these estimate to be helpful when investigating mapping properties for the slanted potential, which would be required in order to obtain an analog of Corollary \ref{cor:Lorentzreg} for $\text{BMO}$-drifts in case $s = 1/2$.


We can then prove the following theorem:
 
 \begin{theorem}\label{BMOdata}
 Let $s \in [1/2,1)$ with $b \in L^\infty(\mathbb R;\mathrm{BMO}(\mathbb R^d))$. Assume furthermore that there exist constants $C_1,C_2 > 0$ such that 
 \begin{align}
\label{eq:BMO-ass}
\sup_{t,x_0} \dashint_{B_{1}(x_0)} |b(t,x)| \d x \le C_1, \qquad \Vert b \Vert_{L^{\infty}(\R;\mathrm{BMO}(\R^d))} \le C_2.
\end{align}
 Then the following holds:

 \begin{itemize}
 \item Suppose $s \in (1/2,1)$. Then the statements of Proposition \ref{prop:Holderq} and Theorem \ref{thm:PE} remain true under the more general assumptions on $b$ indicated above. The constants depend in addition on $C_1,C_2$.
 \item If $s = 1/2$ and $v$ is a weak solution to \eqref{eq:PDE-hom} in $Q_{\theta r}(t_0,x_0)$ for some $r > 0$, where $\theta = 1 + c_0(C_1 + C_2 |\log(r)|)$ for some $c_0 > 0$ depending only on $d$, then $v \in C^{\alpha}_{loc}(Q_{r}(t_0,x_0))$, and the following estimate holds true for any $q > 1$:
	\begin{align}
	\label{eq:Holderq-BMO}
	\begin{split}
		\Vert v \Vert_{C^{\alpha}(\tilde{Q}_{r/2}(t_0,x_0))} & \leq c r^{-\alpha} \Bigg[\left( \dashint_{\tilde{Q}_{r}(t_0,x_0)} |v| \d x \d t \right) \\&\quad + \left(\dashint_{I_r^{\ominus}(t_0)} \widetilde{\tail}(v(t);x_0,r)^q \d t \right)^{1/q} \Bigg]
		\end{split}
	\end{align}
	for some $c > 0$, $\alpha \in (0,1)$, depending only on $d,\Lambda,q,C_1,C_2$.\\
 Moreover, for any weak solution $u$ to \eqref{eq1} in $Q_{\theta r}(t_0,x_0)$ we have
 \begin{align}
	\label{eq:potential-est-BMO}
	\begin{split}
 |u(t_0,x_0)| & \leq c \left(\dashint_{\tilde{Q}_{r}(t_0,x_0)} |u|^q \d x \d t \right)^{1/q} \\
 &\quad + c \left(\dashint_{I_r^{\ominus}(t_0)} \widetilde{\tail}(u(t);x_0,R)^q \d t \right)^{1/q} \\ & \quad + c \widetilde{\mathcal {P}}^{r}_{1}[\mu](t_0,x_0) ,
 \end{split}
 \end{align}
	where $c$ depends only on $d,\Lambda,q, C_1,C_2$.
 \end{itemize}
 \end{theorem}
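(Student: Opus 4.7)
The plan is to reduce both parts of Theorem \ref{BMOdata} to the proofs of \autoref{prop:Holderq} and \autoref{thm:PE} by treating the drift differently depending on whether the problem is subcritical or critical, and in the critical case using a time-dependent translation to ``straighten'' the mean transport.

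\textbf{Case $s\in(1/2,1)$.} Parabolic rescaling $(t,x)\mapsto(r^{2s}\tau, ry)$ sends the drift to $b_r(\tau,y)=r^{2s-1}b(r^{2s}\tau,ry)$. This preserves the $\mathrm{BMO}$ seminorm but multiplies its size by $r^{2s-1}\to 0$, and combined with the $L^1$ bound $\dashint_{B_1}|b|\le C_1$ and iterated $\mathrm{BMO}$ (which gives $|(b)_{B_r}-(b)_{B_1}|\le cC_2(1+|\log r|)$), yields $\|b_r\|_{L^p(B_1)}\le c(p)\,r^{2s-1}(C_1+C_2(1+|\log r|))$ for every finite $p$, via John--Nirenberg. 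Inspecting the proofs of \autoref{prop:Holderq} and \autoref{thm:PE}, the drift enters only through integrations by parts (using $\dive b(t)=0$) that give terms of the shape $\int |\nabla\eta|\,|b|\,|v|^2$ on small balls, estimated by a Hölder pair combining $\|b\|_{L^p}$ with a Sobolev norm of $v$. Replacing $\|b\|_\infty$ by $\|b_r\|_{L^p(B_1)}$ therefore costs only a small factor that becomes arbitrarily small at small scales; the excess-decay Lemma \ref{lemma:OscDecIntro} and its iteration then go through without structural change, with constants now depending in addition on $C_1,C_2$.

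\textbf{Case $s=1/2$.} Rescaling no longer shrinks the drift, so we straighten it. After rescaling $Q_{\theta r}(t_0,x_0)$ to the unit cylinder so that the drift becomes $b_R$ of the same $\mathrm{BMO}$ size, define $z=z_R$ by the ODE in the statement and set $\tilde w(\tau,y)=w(\tau,y+z(\tau))$. A direct computation, in which the nonlocal operator $\mathcal L_\tau$ transforms into the translated operator with kernel $\tilde K_\tau(y,y')=K_\tau(y+z(\tau),y'+z(\tau))$ satisfying the same two-sided bounds, shows that $\tilde w$ satisfies on standard cylinders a nonlocal drift-diffusion equation with drift
\begin{equation*}
\tilde b(\tau,y)=b_R(\tau,y+z(\tau))-z'(\tau),
\end{equation*}
still divergence-free in $y$, with source term the pullback of $\mu$. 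By construction $\dashint_{B_1}\tilde b(\tau,\cdot)\,\mathrm{d}y\equiv 0$ and $\|\tilde b(\tau,\cdot)\|_{\mathrm{BMO}}=\|b(\tau,\cdot)\|_{\mathrm{BMO}}\le C_2$, so John--Nirenberg gives $\|\tilde b(\tau,\cdot)\|_{L^p(B_1)}\le c(p,C_1,C_2)$ uniformly in $\tau$ for every $p<\infty$. Applying the proof of \autoref{prop:Holderq} to $\tilde w$ on standard cylinders produces a Hölder estimate, which, pulled back, is exactly \eqref{eq:Holderq-BMO} on $\tilde Q_{r/2}(t_0,x_0)$. For \eqref{eq:potential-est-BMO}, we run the dyadic excess-decay iteration of \autoref{lemma:OscDecIntro}, but at each scale $2^{-m}r$ we straighten using the corresponding curve $z_{2^{-m}r}$; summing the contributions $2^{dm}|\mu|(\tilde Q_{2^{-m}r}(t_0,x_0))/r^d$ assembles, up to a constant, the slanted potential $\widetilde{\mathcal P}^r_1[\mu](t_0,x_0)$, bounded through \eqref{eq:slpotest}.

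\textbf{Main obstacle.} The decisive difficulty is the critical case: after straightening, $\tilde b$ has zero mean but its order still matches that of $\mathcal L_\tau$. The Caccioppoli-type estimates underlying the proof of \autoref{prop:Holderq} must absorb a term like $\int |\tilde w|^2|\tilde b|$ into the $H^{1/2}$-seminorm, which is exactly scaling-critical; the $L^p$-gain from John--Nirenberg has to be paired with the sharp Sobolev embedding $H^{1/2}\hookrightarrow L^{q}$ for $q<\infty$, and the freedom in $p$ is precisely what allows the absorption. A secondary technical issue is compatibility of the straightening across dyadic scales: the curves $z_{2^{-m}r}$ differ, so one must verify that the corresponding slanted cylinders $\tilde Q_{2^{-m}r}(t_0,x_0)$ are nested in a controlled way. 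This is ensured by the logarithmic bound \eqref{eq:zbounds}, which also underpins the change-of-variables comparison \eqref{eq:slpotest} between the slanted and standard Riesz potentials and explains the appearance of the constants $C_1,C_2$ in the final estimates.
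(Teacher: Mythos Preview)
Your proposal is essentially correct and follows the same strategy as the paper. For $s>1/2$ the paper does not rescale first but works directly in the Caccioppoli estimate: the drift contribution $\int |b|\,|\nabla\tau^2|\,w_\pm^2$ is split via H\"older--Sobolev against $\|b\|_{L^{d/s}}$ and the $H^s$-seminorm of $\tau w_\pm$, and one uses $\rho^{2s-2}|\log\rho|\le c\rho^{-2s}$ to recover exactly the same Caccioppoli inequalities \eqref{eq:Cacc1}, \eqref{eq:Cacc2} as in the bounded-drift case. Your rescaling formulation is equivalent in content.

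One point deserves sharpening. In the critical case $s=1/2$ you write ``Applying the proof of \autoref{prop:Holderq} to $\tilde w$ on standard cylinders produces a H\"older estimate''. This is not quite right as stated: after a single straightening, $\tilde b$ has zero mean on $B_1$, but on a subball $B_\rho\subset B_1$ its mean is of order $C_2|\log\rho|$, so the Caccioppoli inequality at scale $\rho$ still carries the logarithmic correction and the full oscillation-decay iteration of \autoref{prop:Holderq} cannot be run directly on $\tilde w$. The paper (following \cite{sireDCDS}) therefore uses the growth lemma and local boundedness only at unit scale, and re-straightens at every dyadic scale via \autoref{lemma:BMO-rescaling} --- precisely the iterated straightening you already describe for the potential estimate and flag in your ``secondary technical issue''. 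So the H\"older estimate \eqref{eq:Holderq-BMO} requires the same scale-by-scale straightening, not a single application of the straightened proof.
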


\subsection*{Outline}
 
This article is structured as follows. In Section \ref{sec:Holder}, we  prove the localized H\"older regularity estimate Theorem \ref{prop:Holderq} for bounded drifts, and for drifts in BMO \eqref{eq:Holderq-BMO}. Section \ref{sec:comparison} establishes a comparison estimate between solutions to \eqref{eq1} and the homogeneous problem \eqref{eq:PDE-hom}. The potential estimate for bounded drifts \autoref{thm:PE}, and for drifts in BMO \eqref{eq:potential-est-BMO} are shown in Section \ref{sec:potential}. Finally, Section \ref{sec:Aronson} establishes the heat kernel upper bound \autoref{thm:uhkeintro}.

\section{Local H\"older estimates for homogeneous problems}
\label{sec:Holder}

H\"older regularity estimates for global solutions to \eqref{eq:PDE-hom} have already been established in \cite{sireDCDS}. Our goal in this section is two-fold: First to prove a localized version of such H\"older bounds; second, to accomplish this goal under low integrability assumptions on the tail term, leading to precise estimates that are suitable to subsequently derive potential estimates. We accomplish this by proving the desired H\"older estimates directly, without relying on the harmonic extension that was used in the previous works \cite{CaffaVasseur,sireDCDS}.

Indeed, for linear parabolic equations without drift, i.e. $b \equiv 0$, the H\"older regularity of weak solutions to \eqref{eq:PDE-hom} has been established in various previously mentioned articles by a direct approach using either a nonlocal adaptation of the De Giorgi- or the Moser iteration. The proof consists of two main parts: a local boundedness estimate and a proof of oscillation decay. In order to run the respective iteration arguments, one needs to test the weak formulation of \eqref{eq:PDE-hom} with suitable test-functions and derive corresponding Caccioppoli-type estimates. In our case, the main work consists in proving the energy estimates. Starting from those estimates, the derivation of the local boundedness and the oscillation decay goes by an adaptation of the techniques recently developed in \cite{KaWe23}, \cite{Lia22} (see also \cite{ByKy23}).

\begin{remark} 
The nonlocal $\log(u)$-lemma seems to fail for \eqref{eq:PDE-hom}. Therefore, we need to follow a De Giorgi-type approach which does not make use of such estimate in order to deduce the oscillation decay. We follow the technique developed in \cite{Lia22} for the fractional $p$-Laplacian.
\end{remark}

\subsection{Energy estimates}

We have the following energy estimate for solutions to \eqref{eq:PDE-hom}. Note that they do not differ from the respective estimates for solutions to \eqref{eq:PDE-hom} in case $b \equiv 0$. For now, we always assume that $b \in L^\infty(\mathbb{R}^{d+1})$ with $\Vert b \Vert_{L^{\infty}(\R^{d+1})} \le \Lambda$ until specified otherwise.

Recall the notation
\begin{align*}
Q_r(t_0,x_0) = (t_0 - r^{2s} , t_0) \times B_{r}(x_0).
\end{align*}
Moreover, we write
\begin{align*}
I^{\ominus}_r(t_0) := (t_0 - r^{2s} , t_0), \qquad I^{\oplus}_r(t_0) := (t_0, t_0 + r^{2s}).
\end{align*}

In the following, we will drop $t_0$ from the notation and also write $B_r(x_0) = B_r$ and $Q_r(t_0,x_0) = Q_r$, whenever no confusion can arise.

Moreover, we write
\begin{align}
\label{eq:energy-def}
\mathcal{E}^{K_t}(u,v) = \int_{\R^d} \int_{\R^d}(u(x) - u(y))(v(x) - v(y)) K_t(x,y) \d y \d x
\end{align}
for the energy associated to $\mathcal{L}_t$, and denote
\begin{align*}
\mathcal{E}^s_{B_r}(u,v) = \int_{B_r} \int_{B_r} (u(x) - u(y))(v(x) - v(y))|x-y|^{-d-2s} \d y \d x \qquad \forall r > 0.
\end{align*}

\begin{lemma}
\label{lemma:Cacc}
Let $v$ be a weak subsolution (supersolution) to \eqref{eq:PDE-hom} in $Q_{2R}$. Then, there exists a constant $c > 0$ depending only on $d,s,\Lambda$ such that for any $l \in \R$ and any $0 < \rho_1 \le r \le r+\rho_1 \le R$ and $0 < \rho_2 \le r \le r+\rho_2 \le R$ such that $I_{r+\rho_1}^{\ominus} \times B_{r+\rho_2} \subset Q_{2R}$:
\begin{align}
\label{eq:Cacc1}
\begin{split}
\sup_{t \in I^{\ominus}_r} & \int_{B_r} w_{\pm}^2(x,t) \d x + \int_{I^{\ominus}_r} \cE^s_{B_r}(w_{\pm}(t),w_{\pm}(t)) \d t \\
&\le c [\rho_2^{-2s} \vee ((r+\rho_1)^{2s} - r^{2s})^{-1}] \int_{I^{\ominus}_{r+\rho_1}} \int_{B_{r+\rho_2}} w^2_{\pm}(t,x) \d x \d t \\
&\quad+ c \left(\frac{r+\rho_2}{\rho_2}\right)^d \rho_2^{-2s} \int_{I^{\ominus}_{r+\rho_1}} \left(\int_{B_{r+\rho_2}} w_{\pm}(t,x) \d x \right)\tail(w_{\pm}(t);0,r+\rho_2) \d t,
\end{split}
\end{align} 
and 
\begin{align}
\label{eq:Cacc2}
\begin{split}
\sup_{t \in I_{r + \rho_1}^{\ominus}} & \int_{B_r} w_{\pm}^2(x,t) \d x + \int_{I_{r + \rho_1}^{\ominus}} \cE^s_{B_r}(w_{\pm}(t),w_{\pm}(t)) \d t \\
& \quad + r^{-d-2s} \int_{I_{r+\rho_1}^{\ominus}} \int_{B_r} \int_{B_r} w_{\pm}(t,x) w_{\mp}(t,y) \d y \d x \d t\\
&\le \int_{B_{r+\rho_2}} w_{\pm}^2(t_0 - (r + \rho_1)^{2s},x) \d x + c \rho_2^{-2s} \int_{I_{r + \rho_1}^{\ominus}} \int_{B_{r+\rho_2}} w^2_{\pm}(t,x) \d x \d t \\
&\quad+ c \left(\frac{r+\rho_2}{\rho_2}\right)^d \rho_2^{-2s} \int_{I_{r+\rho_1}^{\ominus}} \left(\int_{B_{r+\rho_2}} w_{\pm}(t,x) \d x \right) \tail(w_{\pm}(t);0,r+\rho_2) \d t,
\end{split}
\end{align} 
where we denote $w = u-l$.
\end{lemma}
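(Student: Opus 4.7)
My plan is to adapt the classical Caccioppoli energy method for nonlocal parabolic equations to include the drift, exploiting the divergence-free condition on $b$. For the subsolution/$+$ branch I test the weak formulation of \eqref{eq:PDE-hom} with the admissible nonnegative test function $\varphi = w_+ \eta^2 \tau$, where $\eta = \eta(x)$ is a smooth spatial cutoff with $\eta \equiv 1$ on $B_r$, $\spt \eta \subset B_{r+\rho_2} \subset \Omega$, $|\nabla \eta| \lesssim \rho_2^{-1}$, and $\tau = \tau(t) \geq 0$ is a temporal cutoff chosen differently for the two estimates; the supersolution/$-$ case is analogous via the nonpositive $-w_- \eta^2 \tau$. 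A standard Steklov averaging argument in time justifies the formal chain rule $w_+ \partial_t v = \tfrac{1}{2} \partial_t w_+^2$ and produces the $\sup_t \int_{B_r} w_+^2$ piece on the left. Taking $\tau$ compactly supported in $(t_0 - (r+\rho_1)^{2s}, t_0)$ with $\tau \equiv 1$ on $I_r^\ominus(t_0)$ and $|\partial_t \tau| \lesssim ((r+\rho_1)^{2s} - r^{2s})^{-1}$ yields \eqref{eq:Cacc1} (the initial datum is killed by $\tau$); taking instead $\tau \equiv 1$ on $I_{r+\rho_1}^\ominus(t_0)$ eliminates $\partial_t \tau$ but forces the initial datum $\int_{B_{r+\rho_2}} w_+^2(t_0 - (r+\rho_1)^{2s},x)\,\d x$ onto the right-hand side, yielding \eqref{eq:Cacc2}.

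The drift is the only structurally new feature compared to the case $b \equiv 0$. Writing $v = l + w_+ - w_-$ and using $w_+ \nabla w_- = 0$ a.e., I reduce
\[
\int_{\R^d} (b,\nabla v)\, w_+ \eta^2 \, \d x \;=\; \tfrac{1}{2} \int_{\R^d} (b, \nabla w_+^2)\, \eta^2 \, \d x,
\]
and integration by parts combined with $\dive b(t) = 0$ (applied after a standard mollification of $b$) yields $-\int w_+^2 \eta (b, \nabla \eta) \, \d x$, bounded in absolute value by $c \Lambda \rho_2^{-1} \int_{B_{r+\rho_2}} w_+^2$. Since $s \geq 1/2$, after normalizing scales so that $R$ is bounded one has $\rho_2^{-1} \lesssim \rho_2^{-2s}$, and hence this contribution is absorbable into the $\rho_2^{-2s}$-term already present on the right-hand sides of \eqref{eq:Cacc1} and \eqref{eq:Cacc2}.

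For the nonlocal term I use the standard splitting $\mathcal{E}^{K_t}(v, w_+\eta^2) = \mathcal{E}^{K_t}(w_+, w_+\eta^2) - \mathcal{E}^{K_t}(w_-, w_+\eta^2)$. By $w_+ w_- \equiv 0$ and symmetry of $K_t$, the second bracket equals $-2\iint w_-(y) w_+(x) \eta^2(x) K_t(x,y) \, \d y \, \d x$ and therefore contributes a nonnegative quantity to the left-hand side: in \eqref{eq:Cacc2} I retain its restriction to $B_r \times B_r$, where the lower kernel bound $K_t \geq \Lambda^{-1}(2r)^{-d-2s}$ produces the mixed term $c\, r^{-d-2s} \iint w_+(x) w_-(y) \, \d x \, \d y$, while in \eqref{eq:Cacc1} it is simply discarded. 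The diagonal piece $\mathcal{E}^{K_t}(w_+, w_+\eta^2)$ is treated via the pointwise Caccioppoli-type identity
\[
(a-b)(a\eta_x^2 - b\eta_y^2) \;\geq\; \tfrac{1}{2}(\eta_x \vee \eta_y)^2 (a-b)^2 - C(a^2 + b^2)(\eta_x - \eta_y)^2,
\]
which combined with $|\eta(x) - \eta(y)| \lesssim \rho_2^{-1}|x-y|$ and the kernel bound yields $\cE^s_{B_r}(w_+, w_+)$ on the left (restricting to $B_r \times B_r$ where $\eta \equiv 1$), up to a correction of order $\rho_2^{-2s} \int w_+^2$. Finally, the integration over $\{y \notin B_{r+\rho_2}\}$, where $\eta(y) = 0$, produces the tail contribution; standard geometric estimates on $|x-y|^{-d-2s}$ for $x \in \spt \eta$ and $|y-x_0| \geq r+\rho_2$ (using $|x - y| \geq \tfrac{\rho_2}{r+\rho_2}|y - x_0|$) yield precisely the prefactor $((r+\rho_2)/\rho_2)^d \rho_2^{-2s}$. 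The main technical obstacle I expect is the careful bookkeeping of these cutoff corrections in the nonlocal decomposition, together with the rigorous justification of the chain rule in time via Steklov averaging (since $v \in L^\infty_t L^2_x \cap L^2_t H^s_x$ is not pointwise differentiable in $t$); once $\dive b = 0$ has been exploited, the drift itself is a soft lower-order perturbation and requires no further ingredient.
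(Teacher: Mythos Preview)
Your proposal is correct and follows essentially the same approach as the paper: test with $w_+\eta^2\tau$, handle the drift via integration by parts using $\dive b=0$ to obtain a $\rho_2^{-1}\int w_+^2$ error (absorbed since $s\ge 1/2$), treat the nonlocal bilinear form by the standard pointwise Caccioppoli identity plus tail splitting, and distinguish \eqref{eq:Cacc1} from \eqref{eq:Cacc2} by the choice of temporal cutoff ($\tau$ vanishing at the initial time versus $\tau\equiv 1$). The only cosmetic differences are that the paper swaps the roles of $\eta$ and $\tau$, quotes the nonlocal decomposition directly from \cite{KaWe22b} rather than rederiving it, and justifies the formal chain rule by vanishing-viscosity approximation instead of Steklov averaging.
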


\begin{remark}
Note that in the following proof we will test the weak formulation of \eqref{eq:PDE-hom} with functions depending on $u$. While strictly speaking, a priori we only have $u \in L^2(I;H^{s}(\Omega))$, testing with the solution itself can be rigorously justified regardless. In fact, this can be achieved by perturbing the equation with an artificial diffusion term of the type $\varepsilon \Delta u$ and then passing to the limit as $\varepsilon \to 0$, see \cite[Appendix C]{CaffaVasseur} for more details.
\end{remark}

\begin{proof}
Let us explain how to prove the desired estimates for subsolutions. The proof of \eqref{eq:Cacc1} and \eqref{eq:Cacc2} is classical in case $b \equiv 0$ and goes by testing the weak formulation with the test function $\phi(t,x) = \eta^2(t)\tau^2(x)w_+(t,x)$ for some cut-off function $\tau \in C_c^{\infty}(B_{r+\frac{\rho_2}{2}})$ with $\tau \equiv 1$ in $B_{r}$, $0 \le \tau \le 1$, and $|\nabla \tau| \le 4 \rho_2^{-1}$, and $\eta \in C^{\infty}(\R)$, which yields for any $t \in I_{r+\rho_1}^{\ominus}$:
\begin{align}
\label{eq:weak-form-test}
\begin{split}
\int_{B_{r+\rho_2}} \eta^2(t) \tau^2 w_{+}^2(t) \d x &+ \int_{I^{\ominus}_{r+\rho_1}} \eta^2 \left( \int_{B_{r+\rho_2}}(b,\nabla [\tau^2 w_{+}]) u \d x \right) \d t \\
&\quad +  \int_{I^{\ominus}_{r+\rho_1}} \eta^2 \cE^{K_s}(u,\tau^2 w_{+}) \d t \\
&\le \eta^2(t_0 - (r+\rho_1)^{2s}) \int_{B_{r+\rho_2}}  \tau^2 w_{+}^2(t_0 - (r+\rho_1)^{2s}) \d x \\
&\quad + 2 \int_{I^{\ominus}_{r+\rho_1}} \eta |\eta'| \left(\int_{B_{r+\rho_2}} \tau^2 w_{+}^2 \d x  \right)\d t.
\end{split}
\end{align}
Moreover, we recall the following estimate (see for instance \cite[Lemma 3.1]{KaWe22b})
\begin{align}
\label{eq:Cacc-KaWe22}
\begin{split}
\int_{I^{\ominus}_{r+\rho_1}}\eta^2 & \cE^s_{B_{r+\rho_2}} (\tau w_+ , \tau w_+ )\d t - \int_{I^{\ominus}_{r+\rho_1}}\eta^2 \cE^s_{B_{r+\rho_2}}(w_-,\tau w_+) \d t \\
& \le c \int_{I^{\ominus}_{r+\rho_1}}\eta^2 \cE^{K_s}(u,\tau^2 w_+) \d t+ c \rho_2^{-2s} \int_{I^{\ominus}_{r+\rho_1}}\eta^2 \left(\int_{B_{r+\rho_2}} w_+^2 \d x \right) \d t\\
&\quad + c \left(\frac{r+\rho_2}{\rho_2}\right)^d \rho_2^{-2s} \int_{I^{\ominus}_{r+\rho_1}} \eta^2 \left(\int_{B_{r+\rho_2}} w_+ \d x \right)\tail(w_+;0,r+\rho_2) \d t,
\end{split}
\end{align}
where we also used assumption (iii) on $K_t$.
These two ingredients are sufficient to prove the energy estimates in case $b \equiv 0$. In order to treat the present case including a drift term, we  observe
\begin{align*}
(b,\nabla \phi) u = \eta^2 w_{\pm}^2(b,\nabla \tau^2) + \eta^2 l (b,\nabla(\tau^2)) + \frac{1}{2} \eta^2 \tau^2 (b , \nabla w_{\pm}^2),
\end{align*}
and that therefore, after integrating by parts and using that $b$ is divergence free and bounded:
\begin{align}
\label{eq:Cacc-drift}
\begin{split}
\left|\int_{I^{\ominus}_{r+\rho_1}} \int_{B_{r+\rho_2}} (b,\nabla \phi) u \d x \d t \right| &\le \frac{3}{2} \int_{I^{\ominus}_{r+\rho_1}} \eta^2 \int_{B_{r+\rho_2}} |(b,\nabla \tau^2)| w_{\pm}^2 \d x \d t \\
&\le c \rho_2^{-1} \int_{I^{\ominus}_{r+\rho_1}} \int_{B_{r+\rho_2}}  w_{\pm}^2 \d x \d t.
\end{split}
\end{align}

The proof of \eqref{eq:Cacc1} follows now by combining the estimates \eqref{eq:weak-form-test}, \eqref{eq:Cacc-KaWe22}, and \eqref{eq:Cacc-drift}, taking the supremum in $t \in I_{r+\rho_1}^{\ominus}$ upon choosing $\eta(t_0 - (r+\rho_1)^{2s}) = 0$, $\eta \equiv 1$ in $I_{r}^{\ominus}$, $|\eta'| \le 4 ((r+\rho_1)^{2s} - r^{2s})^{-1}$, and $0 \le \eta \le 1$.
To prove \eqref{eq:Cacc2} we proceed in the exact same way, however we work with $\eta \equiv 1$.
\end{proof}

\subsection{Local boundedness}

The following is the main result of this subsection. It states that weak solutions to \eqref{eq:PDE-hom} are locally bounded in their solution domain.

\begin{lemma}[Local boundedness]
\label{lemma:locbd}
Let $u$ be a weak solution to \eqref{eq:PDE-hom} in $Q_{2R}$. Then, there exists a constant $c > 0$, depending only on $d,s,\Lambda$, such that 
\begin{align*}
\sup_{Q_{R/2}} |u| \le c \dashint_{Q_R} |u(t,x)| \d x \d t + c \dashint_{I_R^{\ominus}} \tail(u(t);R) \d t.
\end{align*}
\end{lemma}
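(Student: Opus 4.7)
The plan is to run a nonlocal De Giorgi iteration based on the Caccioppoli inequality \eqref{eq:Cacc1}. First, by replacing $u$ with $-u$ it suffices to bound $\sup_{Q_{R/2}} u$ from above. Set
\[
\tilde k := A\left(\dashint_{Q_R}|u|\,\d x\,\d t + \dashint_{I_R^{\ominus}} \tail(u(t);0,R)\,\d t\right),
\]
with a large constant $A\geq 1$ to be chosen, and define the concentric cylinders $Q_{r_j}$ with $r_j = R/2 + R\,2^{-j-1}$, intermediate radii $\tilde r_j = (r_j + r_{j+1})/2$, and truncation levels $k_j = \tilde k(1-2^{-j})$. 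Writing $w_j := (u-k_j)_+$, note that $w_j$ is a weak subsolution since the drift is divergence free and the kernel is symmetric, so \eqref{eq:Cacc1} in Lemma \ref{lemma:Cacc} applies with $l=k_j$, yielding
\[
\sup_{t\in I_{r_{j+1}}^{\ominus}} \int_{B_{r_{j+1}}} w_j^2\,\d x + \int_{I_{r_{j+1}}^{\ominus}}\mathcal{E}^s_{B_{r_{j+1}}}(w_j,w_j)\,\d t \;\leq\; C\left(\frac{2^j}{R}\right)^{2s}\!\!\int_{Q_{r_j}} w_j^2\,\d x\,\d t + \text{tail terms}.
\]
Because of the form of \eqref{eq:Cacc1}, the drift term $b$ enters only through the constant $c$ and does not affect the structural scaling of the iteration.

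Next I would combine the energy bound with a parabolic fractional Sobolev--Gagliardo--Nirenberg inequality of the type
\[
\int_{Q_{r_{j+1}}} w_j^{2\kappa}\,\d x\,\d t \;\leq\; C\Big(\sup_t \int_{B_{r_{j+1}}} w_j^2\Big)^{2s/d}\!\!\int_{I_{r_{j+1}}^{\ominus}} \big(\mathcal{E}^s_{B_{r_{j+1}}}(w_j,w_j) + r_{j+1}^{-2s}\|w_j\|_{L^2(B_{r_{j+1}})}^2\big)\,\d t,
\]
with $\kappa = 1+2s/d>1$. Then using the elementary inequalities
\[
|\{w_{j+1}>0\}\cap Q_{r_{j+1}}|\,(k_{j+1}-k_j)^2 \;\leq\; \int_{Q_{r_{j+1}}} w_j^2, \qquad w_{j+1}\leq w_j,
\]
together with Hölder, one gets a recursion of the standard form $Y_{j+1}\leq C\,b^{j}\,Y_j^{1+\alpha}$ for
\[
Y_j := \dashint_{Q_{r_j}} w_j^2\,\d x\,\d t,
\]
with constants $b>1$ and $\alpha = 2s/d > 0$ depending only on $d,s,\Lambda$, and a geometric loss $(2^j/\tilde k)$ absorbed through the difference of levels $k_{j+1}-k_j = \tilde k\,2^{-j-1}$. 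A standard lemma on geometric iteration then gives $Y_j\to 0$, hence $u\leq \tilde k$ on $Q_{R/2}$, provided the initial quantity $Y_0$ is small enough, which is arranged by choosing $A$ sufficiently large.

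The main obstacle, and the only place where the nonlocal character really matters, is the handling of the tail term $\tail(w_j(t);0,r_j)$ in \eqref{eq:Cacc1}. The idea is to split the integration region as $(\mathbb{R}^d\setminus B_{r_j}) = (B_R\setminus B_{r_j})\cup (\mathbb{R}^d\setminus B_R)$. On $B_R\setminus B_{r_j}$ one has $|y|^{-d-2s}\leq C\,2^{j(d+2s)} R^{-d-2s}$, so the contribution is bounded by $C\,2^{j(d+2s)} R^{-d}\int_{B_R} w_j\,\d x$, which is controlled by $\int_{Q_R} w_j$ and hence can be folded into the iteration scheme as an extra geometric factor. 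On $\mathbb{R}^d\setminus B_R$ we use $w_j(t,y)\leq |u(t,y)|+k_j$: the $|u|$-piece yields $C\,\tail(u(t);0,R)$, which together with $\dashint |u|$ is precisely the right-hand side used to define $\tilde k$, while the piece with $k_j\leq \tilde k$ is reabsorbed by choosing $A$ large. Once both tail contributions are merged with the local $L^2$ term, the recursion closes and the De Giorgi lemma produces the desired bound with a constant depending only on $d,s,\Lambda$.
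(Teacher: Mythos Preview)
Your De Giorgi outline is correct up to the handling of the far tail, but there is a genuine gap precisely there. In \eqref{eq:Cacc1} the tail enters as a time-integral of a \emph{product},
\[
\int_{I_{r_j}^{\ominus}} \Big(\int_{B_{r_j}} w_j(t,x)\,\d x\Big)\,\tail(w_j(t);0,r_j)\,\d t,
\]
and after restricting to $\R^d\setminus B_R$ you are left with $\tail(u(t);0,R)$ \emph{pointwise in $t$}, still coupled to $\int_{B_{r_j}} w_j(t)$. Your sentence that this ``is precisely the right-hand side used to define $\tilde k$'' conflates the pointwise tail with its time-average: $\tilde k$ contains only $\dashint_{I_R^{\ominus}}\tail(u(t);0,R)\,\d t$, which does not dominate $\tail(u(t);0,R)$ at each $t$. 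To extract the $L^1_t$ average from this product one is forced to put a $\sup_t$ on $\int_{B_{r_j}} w_j^2$, but that is the quantity sitting on the \emph{left} of \eqref{eq:Cacc1}; after absorption one is left with a residual of size $C\,2^{jc}R^{d}(\tilde k/A)^2$ carrying no power of $Y_j$, so the recursion $Y_{j+1}\le C\,b^{j}Y_j^{1+\alpha}$ does not close. A second, smaller issue: with $\tilde k$ built from $\dashint_{Q_R}|u|$ rather than $(\dashint_{Q_R}|u|^2)^{1/2}$, the smallness of $Y_0/\tilde k^2$ cannot be forced by taking $A$ large.

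The paper proceeds differently. The direct De Giorgi iteration is used only to establish the weaker estimate \eqref{eq:loc-bd-sup} with $\sup_t \tail(v_+(t);R)$ on the right. The upgrade to the $L^1_t$ tail comes from a spatial localization: one multiplies $u$ by a cutoff $\eta\in C_c^\infty(B_{3R/2})$ with $\eta\equiv 1$ on $B_R$, so that $\eta u$ solves an equation in $Q_{R}$ with right-hand side $f=-\mathcal L_t((1-\eta)u)$ in $B_{R/2}$, and $\|f_+\|_{L^1_tL^\infty_x(Q_{R/2})}\le c\,\dashint_{I_R^{\ominus}}\tail(u_+(t);R)\,\d t$. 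Since $\eta u$ is compactly supported in space, the troublesome term $\sup_t\tail((\eta u)_+;R)$ reduces to a local $L^2$ quantity, which a separate Caccioppoli-plus-absorption estimate \eqref{eq:loc-bd-mixed} controls by $(\dashint_{Q_R}u_+^2)^{1/2}+c\,\dashint_{I_R^{\ominus}}\tail(u_+(t);R)\,\d t$. The final passage from $(\dashint_{Q_R}|u|^2)^{1/2}$ to $\dashint_{Q_R}|u|$ is then a standard covering-and-interpolation step.
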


The main ingredient in the proof is the following lemma:

\begin{lemma}
\label{lemma:locbd-subsol}
Let $u$ be a weak subsolution to \eqref{eq:PDE-hom} in $Q_{2R}$. Then, there exists a constant $c > 0$, depending only on $d,s,\Lambda$, such that 
\begin{align*}
\sup_{Q_{R/2}} u_+ \le c \left(\dashint_{Q_R} u^2_+(t,x) \d x \d t \right)^{1/2} + c \dashint_{I_R^{\ominus}} \tail(u_+(t);R) \d t.
\end{align*}
\end{lemma}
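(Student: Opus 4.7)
The plan is to run a De Giorgi iteration on the super-level sets of $u$, using the Caccioppoli estimate \eqref{eq:Cacc1} together with the fractional parabolic Gagliardo--Nirenberg inequality. Fix $M > 0$ to be chosen at the end and, for $k \ge 0$, set
$$ r_k := \tfrac{R}{2}(1+2^{-k}), \quad l_k := M(1-2^{-k}), \quad w_k := (u-l_k)_+, \quad Q_k := Q_{r_k}, $$
so that $r_k \searrow R/2$, $l_k \nearrow M$, $w_k \searrow (u-M)_+$, and $Q_k \searrow Q_{R/2}$. Since $u$ is a weak subsolution, so is $u-l_k$, so $w_k$ satisfies \eqref{eq:Cacc1} on the configuration $Q_{k+1} \subset Q_k$ with $\rho_1 = \rho_2 \simeq R 2^{-k}$. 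The drift requires no separate treatment, as its contribution has already been absorbed into \eqref{eq:Cacc1} uniformly in $\|b\|_{L^\infty}$.

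First I would apply \eqref{eq:Cacc1} to bound the energy quantity
$$ A_k := \sup_{t \in I^{\ominus}_{r_{k+1}}}\int_{B_{r_{k+1}}} w_k^2\, \d x + \int_{I^{\ominus}_{r_{k+1}}} \mathcal{E}^s_{B_{r_{k+1}}}(w_k,w_k)\, \d t $$
by a local term of order $C^k R^{-2s} \int_{Q_k} w_k^2\, \d x\, \d t$ plus a nonlocal tail contribution. For the latter I would split
$$ \tail(w_k; 0, r_{k+1}) \le c\,R^{-d}\!\int_{B_R} w_k\, \d y + c\,\tail(u_+; R), $$
using $w_k \le u_+$ and $r_{k+1} \ge R/2$. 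The inner piece is absorbed into the $\int_{Q_k} w_k^2$ contribution by Cauchy--Schwarz, while the outer piece is handled by eventually choosing $M \gtrsim \dashint_{I_R^{\ominus}}\tail(u_+(t);R)\, \d t$, together with the observation that on $\supp w_k$ one has $w_{k-1} \ge M 2^{-k}$, which lets any residual factor of $\tail(u_+;R)$ be converted into a factor of $w_{k-1}$ at the cost of an extra geometric constant.

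Next I invoke the fractional parabolic Sobolev embedding
$$ \int_{Q_{k+1}} w_k^{2\kappa}\, \d x\, \d t \le c\,\Bigl(\sup_{t \in I^{\ominus}_{r_{k+1}}}\!\int_{B_{r_{k+1}}} w_k^2\, \d x\Bigr)^{\kappa-1}\int_{I^{\ominus}_{r_{k+1}}} \mathcal{E}^s_{B_{r_{k+1}}}(w_k,w_k)\, \d t, \quad \kappa := 1 + \tfrac{2s}{d}>1. $$
To pass from $w_k$ to $w_{k+1}$ I use the level jump: on $\{w_{k+1}>0\}$ one has $w_k \ge M 2^{-(k+1)}$, hence by Markov
$$ |\{w_{k+1}>0\}\cap Q_{k+1}| \le (M\,2^{-(k+1)})^{-2}\int_{Q_{k+1}} w_k^2\, \d x\, \d t, $$
and H\"older's inequality then yields $\int_{Q_{k+1}} w_{k+1}^2 \le \bigl(\int_{Q_{k+1}} w_k^{2\kappa}\bigr)^{1/\kappa}|\{w_{k+1}>0\}\cap Q_{k+1}|^{1-1/\kappa}$.

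Setting $Y_k := M^{-2}\dashint_{Q_k} w_k^2\, \d x\, \d t$ and combining the previous steps, I arrive at a recursion of the form
$$ Y_{k+1} \le C^k\, Y_k^{1+\delta}, \qquad \delta := \tfrac{2s}{d+2s}, $$
for some constant $C=C(d,s,\Lambda)$, provided $M$ dominates the outer-tail average $\dashint_{I_R^{\ominus}}\tail(u_+(t);R)\, \d t$. The standard fast-geometric-convergence lemma then gives $Y_k \to 0$ as soon as $Y_0 \le C^{-1/\delta^2}$, which is ensured by choosing
$$ M := c_1 \Bigl(\dashint_{Q_R} u_+^2\, \d x\, \d t\Bigr)^{1/2} + c_1 \dashint_{I_R^{\ominus}}\tail(u_+(t);R)\, \d t $$
for $c_1$ sufficiently large. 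Since $Y_k \to 0$ forces $(u-M)_+ \equiv 0$ on $Q_{R/2}$, the claimed bound follows. The main obstacle in this scheme is the careful bookkeeping of the tail term, making sure that the $\tail(u_+;R)$-piece does not spoil the clean recursive structure $Y_{k+1} \le C^k Y_k^{1+\delta}$; this is precisely the point at which the bound $w_{k-1} \ge M 2^{-k}$ on $\supp w_k$, combined with the smallness of $M^{-1}\tail(u_+;R)$, is used.
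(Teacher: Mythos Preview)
Your De Giorgi iteration is correct for producing a bound with the $L^\infty$-in-time tail $\sup_{t\in I_R^\ominus}\tail(u_+(t);R)$ on the right-hand side --- this is exactly estimate \eqref{eq:loc-bd-sup} in the paper, and the paper obtains it precisely the way you describe. The gap is in the passage to the $L^1$-in-time tail. The outer tail contribution in the Caccioppoli estimate is of the form
\[
C^k R^{-2s}\int_{I_k}\Bigl(\int_{B_k} w_k(t,x)\,\d x\Bigr)\,\tail(u_+(t);R)\,\d t,
\]
a time integral of a product. Your level-jump observation $w_{k-1}\ge M2^{-k}$ on $\supp w_k$ lets you trade the spatial integral $\int_{B_k}w_k$ for $2^k M^{-1}\int_{B_k}w_{k-1}^2$, but this does not decouple the time integral: to use the hypothesis $\int_{I_R}\tail(u_+;R)\,\d t\lesssim M R^{2s}$ you must pass to $\sup_t\int_{B_k}w_{k-1}^2$, which is a piece of the energy $A_{k-1}$, not of the iteration quantity $Y_{k-1}$. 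This produces a coupled recursion of the shape $A_k\lesssim C^k M^2|Q|R^{-2s}Y_k + C^k A_{k-1}$, whose geometrically growing coupling prevents the clean closure $Y_{k+1}\le C^kY_k^{1+\delta}$ you assert. You even flag this as ``the main obstacle,'' and indeed it is a genuine one: a direct De Giorgi scheme of this form does not yield the $L^1_t$-tail.

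The paper's argument is genuinely different and proceeds in two stages. First it runs your De Giorgi iteration to obtain the weaker $\sup_t$-tail bound \eqref{eq:loc-bd-sup}, together with a version \eqref{eq:loc-bd-RHS} allowing an $L^1_tL^\infty_x$ source term. Separately, a single application of the Caccioppoli inequality at level $l=0$ gives the mixed bound \eqref{eq:loc-bd-mixed}, controlling $\sup_t\|u_+(t)\|_{L^2(B_{R/2})}$ by the $L^2(Q_R)$-norm plus the $L^1_t$-tail. The key step is then to apply \eqref{eq:loc-bd-RHS} not to $u$ but to $\eta u$ with $\eta\in C_c^\infty(B_{3R/2})$, $\eta\equiv 1$ on $B_R$. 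Because $\eta u$ is compactly supported in space, its tail over $\R^d\setminus B_R$ sees only the annulus $B_{3R/2}\setminus B_R$, hence $\sup_t\tail((\eta u)_+;R)$ is controlled by $\sup_t\|u_+(t)\|_{L^2(B_{3R/2})}$ and thus by \eqref{eq:loc-bd-mixed}. The commutator source $f=-\mathcal L_t((1-\eta)u)$ generated by the cutoff satisfies $\|f_+\|_{L^1_tL^\infty_x(Q_{R/2})}\lesssim\dashint_{I_R^\ominus}\tail(u_+;R)\,\d t$, which is exactly the desired term. This localization trick is what converts the $\sup_t$-tail to the $L^1_t$-tail, and it is not reproduced by your direct iteration.
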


\begin{remark}
In particular, the above lemma implies that weak solutions to \eqref{eq:PDE-hom} in $Q$ are locally bounded, i.e. $u \in L^{\infty}_{loc}(Q)$.
\end{remark}

\begin{proof}[Proof of Lemma \ref{lemma:locbd-subsol}]
Having at hand the energy estimate \eqref{eq:Cacc1}, which is exactly the same as in case $b \equiv 0$, the proof follows by the same arguments as in \cite[Theorem 1.8]{KaWe23}. Let us give a brief sketch: First of all, we observe that with the help of \eqref{eq:Cacc1}, we can derive the following estimates for weak subsolutions $v$ to \eqref{eq:PDE-hom}:
\begin{align}
\label{eq:loc-bd-sup}
\sup_{Q_{R/2}} v_+ &\le c \left(\dashint_{Q_R} v^2_+(t,x) \d x \d t \right)^{1/2} + c \sup_{t \in I_R^{\ominus}} \tail(v_+(t);R),\\
\label{eq:loc-bd-mixed}
\sup_{t \in I_{R/2}^{\ominus}} \left( \dashint_{B_{R/2}} v_+^2(t,x) \d x\right)^{1/2} &\le \left(\dashint_{Q_R} v^2_+(t,x) \d x \d t \right)^{1/2} + c \dashint_{I_{R}^{\ominus}} \tail(v_+(t);R) \d t.
\end{align}
The first estimate can be achieved by a De Giorgi iteration scheme, based on \eqref{eq:Cacc1} (see \cite[Theorem 3.6]{KaWe22b}), without relying anymore on the equation. The second estimate is a direct consequence of \eqref{eq:Cacc1} applied with $l = 0$ and an interpolation argument. It can be found in \cite[Lemma 3.1]{KaWe23}. Moreover, note that if $w$ is a subsolution to $\partial_t w + (b,\nabla w) + \mathcal{L}_t w \le f$, for some $f \in L^{1,\infty}_{t,x}$, then $v(t,x) = w(t,x) - \int_{t_0}^t \Vert f_+(s) \Vert_{L^{\infty}(B_R)} \d s$ is a subsolution to \eqref{eq:PDE-hom}. Therefore, we can deduce from \eqref{eq:loc-bd-sup} that:
\begin{align}
\label{eq:loc-bd-RHS}
\sup_{Q_{R/2}} w_+ \le c \left(\dashint_{Q_R} w^2_+(t,x) \d x \d t \right)^{1/2} + c \sup_{t \in I_R^{\ominus}} \tail(w_+(t);R) + c\Vert f_+ \Vert_{L_t^{1}L_x^\infty(Q_R)}.
\end{align}
To prove the desired result, we will now take $\eta \in C_c^{\infty}(B_{3R/2})$ with $\eta = 1$ in $B_R$ and observe that 
\begin{align*}
\partial_t(\eta u) + (b, \nabla(\eta u)) + \mathcal{L}_t(\eta u) \le -(b, \nabla((1-\eta) u)) - \mathcal{L}_t ((1-\eta) u) =: f ~~ \text{ in } Q_{2R}.
\end{align*} 
Moreover, for $t \in I^{\ominus}_{2R}$ and $x \in B_{R/2}$ it holds $(b, \nabla((1-\eta) u)) = 0$ and therefore
\begin{align*}
\Vert f_+ \Vert_{L^{1,\infty}_{t,x}(Q_{R/2})} = \Vert [\mathcal{L}_t((1-\eta) u)]_+ \Vert_{L^{1,\infty}_{t,x}(Q_{R/2})} \le \dashint_{I_{R/2}^{\ominus}}\tail(u_+(t);R) \d t.
\end{align*}
Thus, by application of \eqref{eq:loc-bd-RHS} to $\eta u$, and estimating $\sup_{t \in I_R^{\ominus}} \tail((\eta u)_+(t);R)$ with the help of \eqref{eq:loc-bd-mixed}, we obtain the desired result.
\end{proof}

\begin{proof}[Proof of Lemma \ref{lemma:locbd}]
An application of Lemma \ref{lemma:locbd-subsol} yields
\begin{align*}
\sup_{Q_{R/2}} |u| \le c \left( \dashint_{Q_R} |u(t,x)|^2 \d x \d t \right)^{1/2} + c \dashint_{I_R^{\ominus}} \tail(u(t);R) \d t.
\end{align*}
The exponent in the first term on the right hand side can be lowered by a standard covering and iteration argument (see \cite[Theorem 6.2]{KaWe22b}).
\end{proof}

\subsection{H\"older regularity estimates}

The main goal of this subsection is to prove Theorem \ref{prop:Holderq}. We establish the oscillation decay by using a nonlocal modification of the parabolic De Giorgi iteration, as it has been carried out for instance in \cite{Lia22} under the additional assumption that the tails are bounded in time (see also \cite{APT22}, \cite{ByKy23}). As it was the case for the local boundedness, this proof also goes as in case $b \equiv 0$, since the equation will only be used through the energy estimates \eqref{eq:Cacc1} and \eqref{eq:Cacc2} from now on. However, some of the arguments need to be modified in order to deal with tails that are $L^q$ in time for some $q \in (1,\infty)$. Therefore, we sketch some parts of the proof.

\begin{lemma}
\label{lemma:growth-lemma1}
Let $u$ be a weak supersolution to \eqref{eq:PDE-hom} in $Q_{2R}$ and assume that $u \ge 0$ in $Q_{2R}$. Then, for any $\delta \in (0,1)$ and $H > 0$ there exists $\nu \in (0,1)$, depending only on $d,s,\Lambda,q$, such that the following holds true: If 
\begin{align*}
\left| \left\{ u \le H \right\} \cap I^{\ominus}_{\delta R} \times B_R \right| \le \nu |I^{\ominus}_{\delta R} \times B_R|,
\end{align*}
and
\begin{align}
\label{eq:tail-small1}
\left(\dashint_{I_{2R}^{\ominus}} \tail( u_-(t) ;2R)^q \d t \right)^{1/q} \le \delta^{\frac{2s}{q}} H,
\end{align}
then it holds:
\begin{align*}
u \ge \frac{H}{2} ~~ \text{ in } I^{\ominus}_{\delta R/2} \times B_{R/2}.
\end{align*}
\end{lemma}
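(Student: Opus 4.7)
The strategy is a De Giorgi measure-to-pointwise iteration for supersolutions, relying only on the Caccioppoli estimate \eqref{eq:Cacc1} from Lemma \ref{lemma:Cacc} (since \eqref{eq:Cacc1} is insensitive to the presence of the drift). The plan is to define, for $j \in \mathbb{N}_0$, shrinking radii and levels
\begin{align*}
R_j := R/2 + R/2^{j+1}, \qquad k_j := H/2 + H/2^{j+1},
\end{align*}
together with cylinders $Q^j := I^{\ominus}_{\delta R_j} \times B_{R_j}$ and truncations $w_j := (k_j - u)_+$, and to iterate toward the limit cylinder $I^{\ominus}_{\delta R/2} \times B_{R/2}$ at the level $H/2$. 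Since $u$ is a supersolution, $w_j$ plays the role of $w_-$ in the Caccioppoli estimate \eqref{eq:Cacc1} applied with $l = k_j$ on nested cylinders with increments $\rho_1, \rho_2 \sim R/2^j$.

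Applying \eqref{eq:Cacc1} and combining with the fractional Sobolev--Poincaré inequality in space and the $L^\infty_t L^2_x$ control provided by the supremum on the left, one gets a parabolic gain of integrability: the $L^{2\kappa}(Q^{j+1})$-norm of $w_j$ is controlled by its $L^2(Q^j)$-norm plus a tail piece, for some $\kappa > 1$ depending only on $d,s$. Because $w_j \geq H/2^{j+2}$ on $\{u < k_{j+1}\}$, an application of Hölder's inequality turns this into the standard recursion
\begin{align*}
A_{j+1} \le C\, b^{j} A_j^{1+\beta}, \qquad A_j := \frac{|\{u < k_j\} \cap Q^j|}{|Q^j|},
\end{align*}
for constants $C, b > 1, \beta > 0$ depending only on $d,s,\Lambda,q$.

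The key technical step is controlling the nonlocal tail under the weaker $L^q$-in-time hypothesis \eqref{eq:tail-small1}. Here nonnegativity of $u$ on $Q_{2R}$ is crucial: it forces $u_-(t,\cdot) \equiv 0$ on $B_{2R}$ for all $t \in I^{\ominus}_{2R}$, so from $w_j \le k_j + u_-$ and $R_j \in [R/2, R]$ one obtains
\begin{align*}
\tail(w_j(t);0,R_j) \le c\, k_j + c\, \tail(u_-(t);0,2R).
\end{align*}
Raising to the $q$-th power, integrating over $I^{\ominus}_{\delta R_j} \subset I^{\ominus}_{2R}$ and invoking \eqref{eq:tail-small1} together with $|I^{\ominus}_{2R}|^{1/q} = c R^{2s/q}$, one controls $\|\tail(w_j(t);0,R_j)\|_{L^q(I^{\ominus}_{\delta R_j})}$ by $c (\delta R)^{2s/q} H$. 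Plugging this back into the tail term of \eqref{eq:Cacc1} (after an additional Hölder in time to turn its $L^1_t$-integrand into an $L^q_t$-bound of the tail times an $L^{q'}_t$-bound of $\int_{B_{R_j}} w_j$), the extra contribution after normalization is bounded by a positive power of $A_j$ with harmless $j$-dependent prefactors; the $\delta^{2s/q}$ factor in the hypothesis precisely cancels the $\delta^{-2s/q}$ arising from passing from $I^{\ominus}_{2R}$ to $I^{\ominus}_{\delta R_j}$, so the recursion for $A_j$ is preserved.

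With the recursion in hand, the standard De Giorgi iteration lemma yields $A_j \to 0$ as soon as $A_0 \le \nu$ for $\nu := \nu(C,b,\beta)$ small enough, which depends only on $d,s,\Lambda,q$. This forces $|\{u < H/2\} \cap (I^{\ominus}_{\delta R/2} \times B_{R/2})| = 0$, that is, $u \ge H/2$ a.e.\ on $I^{\ominus}_{\delta R/2} \times B_{R/2}$. The main obstacle is exactly the bookkeeping in the tail step: the whole point of the $\delta^{2s/q}$ calibration in \eqref{eq:tail-small1} is to ensure that the $L^q$-in-time tail control scales compatibly with the De Giorgi iteration on the thin cylinder $I^{\ominus}_{\delta R}$; a naive bound would introduce a singular factor $\delta^{-2s/q}$ and prevent the iteration from closing.
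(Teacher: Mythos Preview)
Your overall scheme is correct and matches the paper's: a De Giorgi iteration based on \eqref{eq:Cacc1}, with the tail term handled via H\"older in time and the calibrated hypothesis \eqref{eq:tail-small1}. The tail bound $\tail(w_j(t);R_j)\le cH + c\,\tail(u_-(t);2R)$ and the cancellation of the $\delta^{2s/q}$ factors are exactly right.

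There is, however, a genuine gap in the claim that a \emph{single-variable} recursion $A_{j+1}\le C\,b^{j}A_j^{1+\beta}$ closes for all $q>1$. After your H\"older step, the $L^{q'}_t$-factor $\bigl(\int_{I^\ominus_{\delta R_j}}(\int_{B_{R_j}}w_j)^{q'}\bigr)^{1/q'}$ is controlled (using $0\le w_j\le H\cdot \mathbbm{1}_{\{u<k_j\}}$ and $f(t):=|\{u(t)<k_j\}\cap B_{R_j}|/|B_{R_j}|\le 1$) by $cH|B_{R_j}|\,(\delta R_j)^{2s/q'}A_j^{1/q'}$. Combined with the interpolation factor $A_j^{2s/(d+2s)}$, the tail contribution to $A_{j+1}$ carries the exponent $\tfrac{2s}{d+2s}+\tfrac{q-1}{q}$, which exceeds $1$ only when $q>\tfrac{d+2s}{2s}$. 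For $q\in\bigl(1,\tfrac{d+2s}{2s}\bigr]$ the exponent is $\le 1$ and the single-variable De Giorgi lemma does not apply; a ``positive power of $A_j$'' is not enough.

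The paper resolves this by tracking a second quantity
\[
B_i:=\Bigl(\dashint_{I^\ominus_{\delta R_i}} f(t)^{q'}\,\d t\Bigr)^{\frac{1}{q'(1+\kappa)}},\qquad \kappa=\tfrac{2s}{d}\tfrac{q-1}{q},
\]
so that the tail piece is exactly $B_i^{1+\kappa}$, and then derives a \emph{coupled} recursion $A_{i+1}\lesssim 2^{\gamma_1 i}(A_i^{1+\frac{2s}{d+2s}}+A_i^{\frac{2s}{d+2s}}B_i^{1+\kappa})$ together with $B_{i+1}\lesssim 2^{\gamma_2 i}(A_i+B_i^{1+\kappa})$ from an analogous interpolation. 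The conclusion then follows from the two-variable iteration lemma \cite[Chapter~1, Lemma~4.2]{DiB93}. To make your argument work for the full range $q>1$, you need this auxiliary sequence and the coupled iteration; your proof as written is only valid for $q>\tfrac{d+2s}{2s}$.
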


\begin{proof}
The proof goes by a standard De Giorgi iteration argument. We treat the $L^q$-tails by a standard interpolation argument (see \cite{DiB93}). We define 
\begin{align*}
k_i &= \frac{H}{2} + \frac{H}{2^i}, ~~ \hat{k}_i = \frac{k_i + k_{i+1}}{2}, ~~ w_i = (u-\hat{k}_{i})_+, ~~ R_i = \frac{R}{2} + \frac{R}{2^i}, ~~ \hat{R}_i = \frac{R_i + R_{i+1}}{2}\\
A_i &= \frac{\left| \left\{ u \le k_i \right\} \cap I^{\ominus}_{\delta R_i} \times B_{R_i} \right|}{|I^{\ominus}_{\delta R_i} \times B_{R_i}|}, \quad B_i = \left( \dashint_{I_{\delta R_i}^{\ominus}} \left( \frac{|\{ u(t) \le k_i \} \cap B_{R_i}|}{|B_{R_i}|} \right)^{\frac{q}{q-1}} \d t \right)^{\frac{q-1}{q(1+\kappa)}},
\end{align*}
where $\kappa := \frac{2s}{d}\frac{q-1}{q}$, and deduce from \eqref{eq:Cacc1}:
\begin{align}
\label{eq:Cacc1-apply1}
\begin{split}
\sup_{t \in I^{\ominus}_{\delta R_{i+1}}} & \int_{B_{R_{i+1}}} w_{i}^2(x,t) \d x + \int_{I^{\ominus}_{R_{i+1}}} \cE^s_{B_{R_{i+1}}}(w_{i}(t),w_{i}(t)) \d t \\
&\le c 2^{i2s} (\delta R)^{-2s} \int_{I^{\ominus}_{\delta \hat{R}_i}} \int_{B_{\hat{R}_i}} w^2_{i}(t,x) \d x \d t \\
&\quad+ c 2^{i(d+2s)} \hat{R}_i^{-2s} \int_{I^{\ominus}_{\delta \hat{R}_i}} \int_{B_{\hat{R}_i}} w_{i}(t,x) \tail(w_{i}(t);0,\hat{R}_i) \d t\\
&\le c (2^{i2s} + 2^{id} ) R^{-2s} H^2 A_i |Q_{R_i}|\\
&\quad + c 2^{i(d+2s)} R^{-2s} H  \int_{I^{\ominus}_{\delta \hat{R}_i}} \tail(w_{i}(t);0,2R) \left( \int_{B_{\hat{R}_i}}  \mathbbm{1}_{\{ u \le k_i \} }(t,x) \d x \right) \d t\\
&\le c 2^{i(d+2s)} R^{-2s} H^2 A_i |Q_{R_i}| \\
&\quad + c 2^{i(d+2s)} R^{-2s} \delta^{\frac{2s}{q}} \left( \dashint_{I_{2R}^{\ominus}} \tail(u_-(t);2R)^q \d t \right)^{1/q} B_i^{1+\kappa} |Q_{R_i}|\\
&\le c 2^{i(d+2s)} R^{-2s} H^2 |Q_{R_i}| \left( A_i + B_i^{1+\kappa} \right),
\end{split}
\end{align} 
where we applied H\"older's inequality and used \eqref{eq:tail-small1}. Moreover, recall the following estimate, which follows from the fractional Sobolev inequality and H\"older interpolation:
\begin{align}
\label{eq:interpol1}
\begin{split}
A_{i+1} &|I_{\delta R_{i+1}}^{\ominus} \times B_{R_{i+1}}| \le c 2^{2i} H^{-2} (A_i|I_{\delta R_i}^{\ominus} \times B_{R_i}|)^{\frac{2s}{d+2s}} \Bigg[ \sup_{t \in I^{\ominus}_{R_{i+1}}} \int_{B_{R_{i+1}}} w_{i}^2(x,t) \d x \\
& + \int_{I^{\ominus}_{R_{i+1}}} \cE^s_{B_{R_{i+1}}}(w_{i}(t),w_{i}(t)) \d t + 2^{i2s} R^{-2s} \int_{I^{\ominus}_{R_{i+1}}} \int_{B_{R_{i+1}}} w^2_{i}(t,x) \d x \d t \Bigg].
\end{split}
\end{align}
A combination of \eqref{eq:Cacc1-apply1} and \eqref{eq:interpol1} yields for some $\gamma_1 > 1$ and $c_1 > 0$:
\begin{align}
\label{eq:iteration1}
A_{i+1} \le \frac{c_1}{\delta^{2s}} 2^{\gamma_1 i} \delta^{2s \iota} \left(A_i^{1 + \frac{2s}{d+2s}} + A_i^{\frac{2s}{d+2s}} B_i^{1+\kappa} \right),
\end{align}
where $\iota := \frac{2s}{d+2s} \wedge 1 - \frac{q-1}{q(1+\kappa)}$.
Similar to the proof of \eqref{eq:interpol1}, one obtains
\begin{align}
\label{eq:interpol2}
\begin{split}
 B_{i+1} &\le c 2^{(d+2)i} \delta^{-\frac{2s(q-1)}{q(1+\kappa)}} R^{-d} H^{-2} \Bigg[ \sup_{t \in I^{\ominus}_{R_{i+1}}} \int_{B_{R_{i+1}}} w_{i}^2(x,t) \d x \\
& + \int_{I^{\ominus}_{R_{i+1}}} \cE^s_{B_{R_{i+1}}}(w_{i}(t),w_{i}(t)) \d t + 2^{i2s} R^{-2s} \int_{I^{\ominus}_{R_{i+1}}} \int_{B_{R_{i+1}}} w^2_{i}(t,x) \d x \d t \Bigg],
\end{split}
\end{align}
which yields the following estimate after combination with \eqref{eq:Cacc1-apply1} for some $\gamma_2 > 1$ and $c_2 > 0$:
\begin{align}
\label{eq:iteration2}
B_{i+1} \le \frac{c_2}{\delta^{2s}} 2^{\gamma_2 i} \delta^{2s \iota} \left(A_i + B_i^{1+\kappa} \right).
\end{align}
Finally, note that by assumption, we have $A_0 \le \nu < 1$, and therefore
\begin{align*}
A_0 + B_0^{1+\kappa} \le A_0 + A_0^{\frac{q-1}{q}} \le 2 A_0^{\frac{q-1}{q}} \le 2 \nu^{\frac{q-1}{q}}.
\end{align*}
Thus, the desired result follows by an application of a classical iteration lemma (see \cite[Chapter 1, Lemma 4.2]{DiB93}), upon choosing $\nu > 0$ small enough, depending only on $d,s,q,c_1,c_2,\gamma_1,\gamma_2$.
\end{proof}

\begin{lemma}
\label{lemma:growth-lemma3}
Let $u$ be a weak supersolution to \eqref{eq:PDE-hom} in $Q_{2R}$ and assume that $u \ge 0$ in $Q_{2R}$. Let $\alpha \in (0,1]$. Then, there exist $\eps, \delta \in (0,1)$, depending only on $d,s,\Lambda,q,\alpha$, such that for any $H > 0$ the following holds true: If
\begin{align} 
\label{eq:measure-ass}
\left| \left\{ u(t_0,\cdot) \ge H \right\} \cap B_R \right| \ge \alpha |B_R|
\end{align}
for some $t_0 \in I_{2R}^{\ominus}$, then either
\begin{align}
\label{eq:tail-small3}
\left(\dashint_{I_{2R}^{\ominus}} \tail(u_-(t);2R)^q \d t \right)^{1/q} > \delta^{\frac{2s}{q}} H,
\end{align}
or for all $t \in I_{\delta R}^{\oplus}(t_0)$ with $I_{\delta R}^{\oplus}(t_0) \subset I_{2R}^{\ominus}$: 
\begin{align*}
\left| \left\{ u(t,\cdot) \ge \eps H \right\} \cap B_R \right| \ge \frac{\alpha}{2} |B_R|.
\end{align*}
\end{lemma}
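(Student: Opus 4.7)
The plan is to apply a forward-in-time variant of the Caccioppoli inequality of Lemma \ref{lemma:Cacc} to the truncation $w := (u-H)_-$ in order to propagate the initial measure bound \eqref{eq:measure-ass} into an $L^2$ bound for $w(t,\cdot)$ on $B_R$ for $t > t_0$, and then to extract the desired level-set estimate via Chebyshev. Suppose \eqref{eq:tail-small3} fails, so that $\bigl(\dashint_{I_{2R}^{\ominus}} \tail(u_-(t); 2R)^q \,dt\bigr)^{1/q} \le \delta^{2s/q} H$. Since $u \ge 0$ in $Q_{2R}$, one has $0 \le w \le H$ there, and \eqref{eq:measure-ass} forces $w(t_0,\cdot) = 0$ on a subset of $B_R$ of measure at least $\alpha|B_R|$. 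For an auxiliary parameter $\lambda \in (0,1]$ to be chosen, it follows that
\[
\int_{B_{R(1+\lambda)}} w^2(t_0,x)\,dx \le \bigl((1-\alpha) + c_d\lambda\bigr) H^2 |B_R|.
\]

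The forward analogue of \eqref{eq:Cacc2}, obtained by redoing the testing procedure from the proof of Lemma \ref{lemma:Cacc} with a temporal cutoff equal to $1$ at $t_0$ and vanishing at the terminal time, applied on the slab $(t_0, t_0+(\delta R)^{2s}) \times B_{R(1+\lambda)}$ gives
\[
\sup_{t \in [t_0, t_0+(\delta R)^{2s}]} \int_{B_R} w^2(t,x)\,dx \le \int_{B_{R(1+\lambda)}} w^2(t_0,x)\,dx + (\text{cutoff term}) + (\text{tail term}).
\]
Using $w \le H$ bounds the cutoff term by $c\lambda^{-2s}\delta^{2s}H^2|B_R|$. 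For the tail term, I would exploit that $u \ge 0$ on $B_{2R}$, so $\tail(w(t); x_0, R(1+\lambda)) \le c_d H + c_d \tail(u_-(t); 2R)$; H\"older in time together with the assumed control on the averaged tail of $u_-$ produces a bound of the form $c\lambda^{-d-2s}\delta^{2s}H^2|B_R|$. Choosing first $\lambda$ small depending on $\alpha$ and $d$, and then $\delta$ small depending on $\alpha, d, s, q, \Lambda$, forces the right-hand side below $(1-\alpha/2)^2 H^2 |B_R|$.

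Finally, since $\{u(t,\cdot) < \epsilon H\} \subset \{w(t,\cdot) > (1-\epsilon) H\}$, Chebyshev yields
\[
|\{u(t,\cdot) < \epsilon H\} \cap B_R| \le \frac{(1-\alpha/2)^2}{(1-\epsilon)^2} |B_R|,
\]
which is at most $(1-\alpha/2)|B_R|$ once $\epsilon$ is taken sufficiently small depending only on $\alpha$; passing to complements gives the desired conclusion on the full forward slab $I_{\delta R}^{\oplus}(t_0)$. The main technical obstacle is that Lemma \ref{lemma:Cacc} is stated only for backward time intervals of the form $I_{r+\rho_1}^{\ominus}$, so one must briefly record the time-reversed variant in which $w^2$ is evaluated at the initial rather than terminal time; beyond that, the argument is essentially the careful bookkeeping required to order the three small parameters $\lambda$, $\delta$, and $\epsilon$ so that each error contribution is absorbed into the quantitative gap between $1-\alpha$ and $1-\alpha/2$.
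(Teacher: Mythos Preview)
Your proposal is correct and follows essentially the same approach as the paper: apply the forward-in-time variant of \eqref{eq:Cacc2} to $(u-H)_-$, control the cutoff and tail errors by powers of $\delta$, and conclude via Chebyshev. The only cosmetic difference is that the paper shrinks the spatial ball to $B_{(1-\sigma)R}$ (adding back the annulus $B_R\setminus B_{(1-\sigma)R}$ at the Chebyshev step) whereas you enlarge to $B_{R(1+\lambda)}$; the resulting bookkeeping and choice of parameters are otherwise identical.
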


\begin{proof}
Let us assume that \eqref{eq:tail-small3} does not hold true. Let $\sigma \in (0,1)$. We apply \eqref{eq:Cacc2} to obtain for any $t \in I_{\delta R}^{\oplus}(t_0)$
\begin{align*}
\int_{B_{(1-\sigma)R}} & (u -H)^2_-(t,x) \d x \le \int_{B_R} (u - H)_-^2(t_0,x) \d x \\
&\quad + c (\sigma R)^{-2s} \int_{I_{\delta R}^{\oplus}(t_0)} \int_{B_R} (u - H)^2_- \d x \d \tau \\
&\quad + c \sigma^{-d-2s} R^{-2s} \int_{I_{\delta R}^{\oplus}(t_0)} \int_{B_R} (u - H)_- \tail((u - H)_-;R) \d x \d \tau\\
&\le H^2 |B_R| \left(  (1 - \alpha) + c \sigma^{-2s} \delta^{2s} + c \sigma^{-d-2s} \delta^{2s} \right),
\end{align*}
where we also used \eqref{eq:measure-ass}, and applied similar arguments to derive the second estimate, as in the previous proofs. From here, the proof is standard (see \cite[Lemma 3.3]{Lia22}), namely, for any $\eps \in (0,1)$, we have
\begin{align*}
|\{ u(t,\cdot) \le \eps H \} \cap B_R| &\le |\{ u(t,\cdot) \le \eps H \} \cap B_{(1-\sigma)R}| + |B_R \setminus B_{(1-\sigma)R}|\\
&\le ((1-\eps) H)^{-2} \int_{B_{(1-\sigma)}} (u - H)^2_-(t,x) \d x +  d \sigma |B_R|\\
&\le (1-\eps)^{-2} \left( (1 - \alpha) + c \sigma^{-2s} \delta^{2s} + c \sigma^{-d-2s} \delta^{2s} + d\sigma \right) |B_R|,
\end{align*}
which yields the desired result upon choosing $\sigma , \eps, \delta$ suitably, depending only on $d,s,c,\alpha$.
\end{proof}

\begin{lemma}
\label{lemma:growth-lemma4}
Let $u$ be a weak supersolution to \eqref{eq:PDE-hom} in $Q_{2R}$ and assume that $u \ge 0$ in $Q_{2R}$. Let $\delta, \sigma \in (0,1]$ and $H > 0$. Then, the following holds true: If
\begin{align} 
\label{eq:measure-ass2}
\left| \left\{ u(t,\cdot) \ge H \right\} \cap B_R \right| \ge \alpha |B_R| ~~ \forall t \in I_{\delta R}^{\ominus}
\end{align}
for some $\alpha \in (0,1)$, then either
\begin{align}
\label{eq:tail-small4}
\left(\dashint_{I_{2R}^{\ominus}} \tail(u_-(t);2R)^q \d t \right)^{1/q} > \delta^{\frac{2s}{q}} H,
\end{align}
or there exists a constant $c > 0$, depending only on $d,s, \Lambda,q$, such that
\begin{align*}
\left| \left\{ u \le \frac{\sigma H}{4} \right\} \cap I^{\ominus}_{\delta R} \times B_R \right| \le c \frac{\sigma}{\delta^{2s} \alpha} |I^{\ominus}_{\delta R} \times B_R|.
\end{align*}
\end{lemma}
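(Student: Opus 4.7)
The plan is to first dispose of the trivial case where \eqref{eq:tail-small4} holds, so that we may assume $\bigl(\dashint_{I_{2R}^{\ominus}}\tail(u_-(t);2R)^q\d t\bigr)^{1/q}\le \delta^{2s/q}H$. The strategy is then to study the truncation $w := (u-\sigma H/2)_{-}$, which satisfies $0\le w\le \sigma H/2$ pointwise in $Q_{2R}$ (using $u\ge 0$) and, by the measure hypothesis \eqref{eq:measure-ass2}, vanishes on a subset of $B_R$ of Lebesgue measure at least $\alpha|B_R|$ for \emph{every} $t\in I_{\delta R}^{\ominus}$. The goal is to derive a space-time integral bound on $w$ and then convert it into a measure bound on $\{u\le \sigma H/4\}$ via Chebyshev.

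\textbf{Energy estimate.} The first step is to apply the Caccioppoli-type estimate \eqref{eq:Cacc2} to $w$ at level $l=\sigma H/2$ on the cylinder $I_{\delta R}^{\ominus}\times B_R$. Because the time-scale $\delta R$ is not intrinsic, a direct application of \eqref{eq:Cacc2} is unavailable; one must re-run the test-function argument of Lemma \ref{lemma:Cacc} with a time cutoff $\eta$ satisfying $\eta(t_0-(\delta R)^{2s})=0$, $\eta\equiv 1$ on most of $I_{\delta R}^{\ominus}$, and $|\eta'|\lesssim(\delta R)^{-2s}$. This choice is precisely what produces the factor $\delta^{-2s}$ in the conclusion. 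With the initial slice vanishing thanks to $\eta$, the bulk integral $\int w^2$ dominated by $(\sigma H/2)^2|I_{\delta R}^{\ominus}\times B_R|$, and the tail term controlled via H\"older and the negation of \eqref{eq:tail-small4}, one arrives at an estimate of the form
\begin{equation*}
\int_{I_{\delta R}^{\ominus}}\mathcal{E}^s_{B_R}(w,w)\d t \le c\,(\sigma H)^{2}\,\delta^{-2s}R^{-2s}\,|I_{\delta R}^{\ominus}\times B_R|.
\end{equation*}

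\textbf{Passage to the measure bound.} Next I combine this energy bound with the slice-wise fractional Poincar\'e inequality: since $|\{w(t,\cdot)=0\}\cap B_R|\ge\alpha|B_R|$ at every $t\in I_{\delta R}^\ominus$, one has
\begin{equation*}
\int_{B_R}w^2(t,x)\d x \le cR^{2s}\alpha^{-1}\,\mathcal{E}^s_{B_R}(w(t),w(t)),
\end{equation*}
which after integration in $t$ and substitution into the previous energy bound gives $\int_{I_{\delta R}^{\ominus}\times B_R}w^2\d x\d t \le c\,\sigma^2 H^2\,\delta^{-2s}\alpha^{-1}\,|I_{\delta R}^{\ominus}\times B_R|$. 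A Chebyshev inequality at the level $\sigma H/4$, combined with the sharp pointwise two-sided control $\sigma H/4\le w\le \sigma H/2$ valid on $\{u\le \sigma H/4\}$, then yields the desired measure estimate
\begin{equation*}
\bigl|\{u\le \sigma H/4\}\cap (I_{\delta R}^{\ominus}\times B_R)\bigr|\le c\,\frac{\sigma}{\delta^{2s}\alpha}\,|I_{\delta R}^{\ominus}\times B_R|.
\end{equation*}

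\textbf{Main obstacle.} The delicate point is to extract the precise \emph{linear} dependence on $\sigma$: a naive $L^2$-Chebyshev produces only the $\sigma$-independent bound $c/(\delta^{2s}\alpha)\,|I_{\delta R}^{\ominus}\times B_R|$, because the quadratic gain $\sigma^2$ inherited from $w\le\sigma H/2$ is exactly cancelled by the factor $(\sigma H/4)^{-2}$ from Chebyshev at level $\sigma H/4$. The linear factor of $\sigma$ must therefore be recovered by either a level-set iteration on the scales $k_j=2^{-j}H$ (each halving costing a definite multiplicative factor $\rho<1$, so that the product across $\log_2(4/\sigma)$ steps produces the $\sigma$ factor), or an $L^1$-based argument that exploits the sharp range $\sigma H/4\le w\le\sigma H/2$ on the level set, in the spirit of the De Giorgi ``second alternative'' used for the fractional $p$-Laplacian in \cite{Lia22}. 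A secondary technical point is the non-intrinsic time scaling of the cylinder $I_{\delta R}^{\ominus}\times B_R$, which is resolved by the custom time cutoff described above and is responsible for the $\delta^{-2s}$ factor.
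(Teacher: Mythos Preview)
Your proposal correctly sets up the truncation and correctly isolates the obstacle, but it does not actually close the gap: the $\mathcal{E}^s_{B_R}(w,w)$-plus-Poincar\'e route you outline yields only the $\sigma$-independent bound, as you yourself note, and neither of your two suggested repairs is carried out. The paper's proof resolves the linear $\sigma$-dependence in one stroke by exploiting a term in \eqref{eq:Cacc2} that your approach discards, namely the \emph{cross-term}
\[
r^{-d-2s}\int_{I_{r+\rho_1}^{\ominus}}\int_{B_r}\int_{B_r} w_{-}(t,x)\,w_{+}(t,y)\,\d y\,\d x\,\d t
\]
on the left-hand side. Writing $w=u-\sigma H/2$ (so your $w$ is the paper's $w_-$) and applying \eqref{eq:Cacc2} with spatial scales $r=R$, $\rho_2=R$ over the time interval $I_{\delta R}^{\ominus}$, one bounds the initial-data term, the bulk term, and the tail term (the latter via H\"older and the failure of \eqref{eq:tail-small4}) by $c(\sigma H)^2(\delta R)^{-2s}|I_{\delta R}^{\ominus}\times B_R|$. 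The decisive observation is that on the set $\{u(t,\cdot)\ge H\}$ one has $w_+ = u-\sigma H/2 \ge H/2$, a lower bound that is \emph{independent of $\sigma$}. Hence $\int_{B_R} w_+(t,y)\,\d y \ge c\alpha H R^d$ for every $t\in I_{\delta R}^{\ominus}$ by \eqref{eq:measure-ass2}, and dividing gives
\[
\int_{I_{\delta R}^{\ominus}}\int_{B_R} w_-(t,x)\,\d x\,\d t \;\le\; \frac{c}{\alpha H R^{d}}\int_{I_{\delta R}^{\ominus}}\int_{B_R}\int_{B_R} w_-(t,x)\,w_+(t,y)\,\d y\,\d x\,\d t.
\]
An $L^1$-Chebyshev at level $\sigma H/4$ then yields the measure bound, and the power count is $(\sigma H)^{-1}\cdot(\alpha H)^{-1}\cdot(\sigma H)^2=\sigma/\alpha$, exactly the required linear factor. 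In other words, the cross-term already encodes the nonlocal isoperimetric information that your Poincar\'e step tries to recover, but with the crucial advantage that $w_+$ sees the scale $H$ rather than $\sigma H$.

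On your secondary point: no custom time cutoff vanishing at the initial slice is needed. The paper simply retains the initial-data term $\int_{B_{2R}} w_-^2$ from \eqref{eq:Cacc2} and bounds it by $c(\sigma H)^2|B_{2R}|$ using $0\le w_-\le\sigma H/2$; the factor $\delta^{-2s}$ then appears only when rewriting $R^d$ as $(\delta R)^{-2s}|I_{\delta R}^{\ominus}\times B_R|$.
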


\begin{proof}
Let us assume that \eqref{eq:tail-small4} does not hold true. We define $w = u - \frac{\sigma H}{2}$ and apply \eqref{eq:Cacc2}, which yields:
\begin{align*}
R^{-d-2s} & \int_{I_{\delta R}^{\ominus}} \int_{B_R} \int_{B_R} w_-(t,x) w_+(t,y) \d y \d x \d t \le \int_{B_{2R}} w_-^2(t_0 - R^{2s},x) \d x \\
&\quad + c R^{-2s} \int_{I_{\delta R}^{\ominus}} \int_{B_{2R}} w_-^2 \d x \d t + c R^{-2s} \int_{I_{\delta R}^{\ominus}} \left(\int_{B_{2R}} w_-(t,x) \d x \right) \tail(w_-;R) \d t\\
&\le c (\sigma H)^2 (\delta R)^{-2s} |I^{\ominus}_{\delta R} \times B_R| + c (\sigma H) \delta^{-2s} R^d \dashint_{I_{R}^{\ominus}}\tail(w_-;2R) \d t\\
&\le c (\sigma H)^2 (\delta R)^{-2s} |I^{\ominus}_{\delta R} \times B_R|
\end{align*}
Here, we used the same arguments as in the proof of the previous lemmas to obtain the second estimate. Moreover, we used H\"older's inequality to estimate the tail term. Finally, by \eqref{eq:measure-ass2}, we conclude
\begin{align*}
\left| \left\{ u \le \frac{\sigma H}{4} \right\} \cap I^{\ominus}_{\delta R} \times B_R \right| &\le c(\sigma H)^{-1} \int_{I_{\delta R}^{\ominus}} \int_{B_R} w_-(t,x) \d x \d t \\
&\le c(\sigma\alpha)^{-1}  H^{-2} R^{-d} \int_{I_{\delta R}^{\ominus}} \int_{B_R} \int_{B_R} w_-(t,x) w_+(t,y) \d y \d x \d t\\
&\le c \frac{\sigma}{\delta^{2s} \alpha} |I^{\ominus}_{\delta R} \times B_R|,
\end{align*}
as desired.
\end{proof}

Let us state the following growth lemma, which is an immediate consequence of Lemmas \ref{lemma:growth-lemma1}, \ref{lemma:growth-lemma3}, and \ref{lemma:growth-lemma4}:

\begin{corollary}[Growth lemma]
\label{cor:growth-lemma}
Let $u$ be a weak supersolution to \eqref{eq:PDE-hom} in $Q_{2R}$ and assume that $u \ge 0$ in $Q_{2R}$. Let $ \alpha \in (0,1]$ and $H > 0$. Then, the following holds true: If for some $t_0 \in I_{R}^{\ominus}$
\begin{align} 
\label{eq:measure-ass5}
\left| \left\{ u \ge H \right\} \cap B_R \right| \ge \alpha |B_R|,
\end{align}
then there exist $\delta, \theta \in (0,1)$, depending only on $d,s,\Lambda,q,\alpha$ such that if $I_{\delta R}^{\oplus}(t_0) \subset Q_{2R}$, and 
\begin{align}
\label{eq:tail-small5}
\left(\dashint_{I_{2R}^{\ominus}} \tail(u_-(t);2R)^q \d t \right)^{1/q} \le \delta^{\frac{2s}{q}} \theta H,
\end{align}
then
\begin{align*}
u \ge \frac{\theta H}{8} ~~ \text{ in } I_{\delta R/2}^{\ominus}(t_0 + (\delta R)^{2s}) \times B_{R/2}.
\end{align*}
\end{corollary}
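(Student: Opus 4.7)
The plan is to chain Lemma \ref{lemma:growth-lemma3}, Lemma \ref{lemma:growth-lemma4}, and Lemma \ref{lemma:growth-lemma1} in this order. Lemma \ref{lemma:growth-lemma3} propagates the pointwise-in-time spatial measure bound at $t_0$ forward to every time in a short interval after $t_0$; Lemma \ref{lemma:growth-lemma4} then upgrades this to a small-measure estimate for the sublevel set on the whole space-time cylinder; and Lemma \ref{lemma:growth-lemma1} converts that smallness into the desired pointwise lower bound via De Giorgi iteration.

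More precisely, I would first invoke Lemma \ref{lemma:growth-lemma3} with the given $\alpha$ and $H$, producing constants $\eps,\delta_3\in(0,1)$ (depending only on $d,s,\Lambda,q,\alpha$) such that, provided the tail hypothesis of that lemma holds,
\[
|\{u(t,\cdot)\ge\eps H\}\cap B_R|\ge\tfrac{\alpha}{2}|B_R|\qquad\text{for every }t\in I_{\delta_3 R}^{\oplus}(t_0).
\]
Setting $\tau:=t_0+(\delta_3 R)^{2s}$, so that $I_{\delta_3 R}^{\oplus}(t_0)=I_{\delta_3 R}^{\ominus}(\tau)$, I would then apply Lemma \ref{lemma:growth-lemma4} on the cylinder anchored at $\tau$ with level $\eps H$ and measure threshold $\alpha/2$. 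For any $\sigma\in(0,1]$ this gives
\[
\Bigl|\bigl\{u\le\tfrac{\sigma\eps H}{4}\bigr\}\cap I_{\delta_3 R}^{\ominus}(\tau)\times B_R\Bigr|\le\frac{c\sigma}{\delta_3^{2s}\alpha}\,|I_{\delta_3 R}^{\ominus}(\tau)\times B_R|.
\]
Choosing $\sigma=\sigma(d,s,\Lambda,q,\alpha)$ small enough that $c\sigma/(\delta_3^{2s}\alpha)\le\nu$, with $\nu$ the threshold from Lemma \ref{lemma:growth-lemma1}, I can apply the latter on the same cylinder with level $\sigma\eps H/4$. Its conclusion is $u\ge\sigma\eps H/8$ on $I_{\delta_3 R/2}^{\ominus}(\tau)\times B_{R/2}$, and setting $\delta:=\delta_3$ and $\theta:=\sigma\eps/4$ yields the claim (indeed $\sigma\eps H/8=\theta H/2\ge\theta H/8$).

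The only point requiring genuine care — and the step I would check most carefully — is synchronizing the three tail hypotheses. Each lemma demands a bound of the form $(\dashint_{I_{2R}^{\ominus}}\tail(u_-(t);2R)^q\,dt)^{1/q}\le\delta^{2s/q}\cdot(\text{current level})$, and the current level drops from $H$ in Lemma \ref{lemma:growth-lemma3} to $\eps H$ in Lemma \ref{lemma:growth-lemma4} to $\sigma\eps H/4$ in Lemma \ref{lemma:growth-lemma1}. The choice $\theta=\sigma\eps/4$ makes \eqref{eq:tail-small5} the most stringent of these three constraints, so the single hypothesis of the corollary implies them all simultaneously; the factor $\tfrac{1}{8}$ in the final conclusion absorbs the multiplicative losses incurred along the chain. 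Since $\eps,\delta_3,\sigma,\nu$ depend only on $d,s,\Lambda,q,\alpha$, so do the final $\delta$ and $\theta$.
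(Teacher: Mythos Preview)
Your proposal is correct and follows essentially the same route as the paper: apply Lemma~\ref{lemma:growth-lemma3}, then Lemma~\ref{lemma:growth-lemma4}, then Lemma~\ref{lemma:growth-lemma1}, in that order, choosing $\sigma$ so that the sublevel measure falls below the threshold $\nu$ of Lemma~\ref{lemma:growth-lemma1}. Your explicit synchronization of the three tail hypotheses via the choice $\theta=\sigma\eps/4$ is in fact slightly more careful than the paper's own write-up, which leaves this bookkeeping implicit.
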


\begin{proof}
Let $\xi,\alpha \in (0,1]$. We choose $\delta,\eps \in (0,1)$ as in Lemma \ref{lemma:growth-lemma3} and obtain for all $t \in I_{\delta R}^{\oplus}(t_0)$
\begin{align*}
\left| \left\{ u(t,\cdot) \ge \eps H \right\} \cap B_R \right| \ge \frac{\alpha}{2} |B_R|.
\end{align*}
Next, we can apply Lemma \ref{lemma:growth-lemma4} and obtain that for $\sigma = C^{-1} \nu \delta \alpha$ (where $\nu$ denotes the constant from Lemma \ref{lemma:growth-lemma1}) and $C > 0$ such that $c \frac{\sigma}{\delta^{2s} \alpha} \le \nu$ (where $c > 0$ denotes the constant from \ref{lemma:growth-lemma4})
\begin{align*}
\left| \left\{ u \le \frac{\sigma \eps H}{4} \right\} \cap  I_{\delta R}^{\oplus}(t_0) \times B_R \right| \le c \frac{\sigma}{\delta^{2s} \alpha} |I_{\delta R}^{\oplus}(t_0) \times B_R| \le \nu |I_{\delta R}^{\oplus}(t_0) \times B_R|,
\end{align*}
Finally, we apply Lemma \ref{lemma:growth-lemma1}, which yields
\begin{align*}
u \ge \frac{\sigma \eps H}{8} ~~ \text{ in } Q_{R/2} ~~ \text{ in } I_{\delta R/2}^{\ominus}(t_0 + (\delta R)^{2s}) \times B_{R/2},
\end{align*}
as desired.
\end{proof}

We are now ready to prove the H\"older regularity estimate:

\begin{proof}[Proof of Theorem \ref{prop:Holderq}]
Having Corollary \ref{cor:growth-lemma} and Lemma \ref{lemma:locbd} at our disposal, the proof is standard (see for instance \cite{KaWe23, Lia22}): The proof goes by constructing sequences $(M_j)$ and $(m_j)$ that are non-decreasing and non-increasing, respectively, and to find a small $\gamma \in (0,1)$ and a large $\nu > 1$ such that for every $j \in \mathbb{N}$:
\begin{align}
\label{eq:osc-decay}
m_j \le u \le M_j ~~ \text{ in } Q_{\nu^{-j} R}, ~~ \text{ and } M_j - m_j = L \nu^{-\gamma j},
\end{align}
where
\begin{align*}
L := C_0 \Vert u \Vert_{L^{\infty}(Q_R)} + \left( \dashint_{I_{2R}^{\ominus}} \tail(u(t);2R)^q \d t \right)^{\frac{1}{q}}
\end{align*}
for some constant $C_0$ to be determined later. Once this is achieved, we obtain
\begin{align*}
|u(t,x) - u(s,y)| \le R^{-\gamma} L (|x-y| + |t-s|^{1/(2s)})^{\gamma}, 
\end{align*}
from which we conclude the desired result upon recalling Lemma \ref{lemma:locbd}.

We choose $\nu$ such that $ Q_{\nu^{-1} R} \subset I_{\delta R/2}^{\ominus} \times B_{\delta R/2}$, where $\delta \in (0,1]$ denotes the constant in Corollary \ref{cor:growth-lemma} corresponding to $\alpha = 1/2$.
To prove \eqref{eq:osc-decay}, we observe that upon setting $M_j = \nu^{-\gamma j}L/2$ and $m_j = - \nu^{-\gamma j}L/2$, \eqref{eq:osc-decay} holds true for every $j \le j_0$ if we choose $C_0 \ge 2 \nu^{\gamma j_0}$, where $j_0$ can be chosen arbitrarily. To prove \eqref{eq:osc-decay} for $j > j_0$, we proceed by induction. Let us assume that \eqref{eq:osc-decay} holds true for some $j \ge j_0$, and assume without loss of generality that
\begin{align}
\label{eq:wlog-osc}
|\{ u(t_0 - ,\cdot) \ge m_j + (M_j - m_j)/2 \} \cap B_{\nu^{-j} R}| \ge \frac{1}{2}|B_{\nu^{-j}R}|.
\end{align}
We apply the growth lemma Corollary \ref{cor:growth-lemma} to $v = u - m_j$, and deduce that $u \ge M_j +  \frac{\theta L \nu^{-\gamma j}}{16}$ in $Q_{\nu^{-(j+1)R}}$. This yields \eqref{eq:osc-decay} for $j+1$ after defining $M_{j+1} = M_j$ and $m_{j+1} = m_j + \frac{\theta L \nu^{-\gamma j}}{16}$. Note that in case \eqref{eq:wlog-osc} fails, we proceed in the same way but upon defining $v = M_j - u$. Note that the verification of the assumptions of Corollary \ref{cor:growth-lemma} is standard and goes exactly as in \cite{KaWe23}, relying on appropriate choices of the parameters $\gamma,\nu,j_0$.
\end{proof}

\subsection{The case of BMO drifts}

Let us explain the necessary modifications in the aforementioned proofs which allow us to prove H\"older regularity estimates (see Proposition \ref{prop:Holderq}) for solutions to \eqref{eq1} with $b \in L^{\infty}(\R;\text{BMO}(\R^d))$.

We state an adapted version of the energy estimate (see Lemma \ref{lemma:Cacc}).

\begin{lemma}
Let $v$ be a weak subsolution (supersolution) to \eqref{eq:PDE-hom} in $Q_{2R}$. Then, in the same setting as in Lemma \ref{lemma:Cacc}, if $s > 1/2$, the estimate \eqref{eq:Cacc1} and \eqref{eq:Cacc2} remain true. If $s = 1/2$, then
\begin{align}
\label{eq:Cacc1-BMO}
\begin{split}
\sup_{t \in I^{\ominus}_r} & \int_{B_r} w_{\pm}^2(x,t) \d x + \int_{I^{\ominus}_r} \cE^{1/2}_{B_r}(w_{\pm}(t),w_{\pm}(t)) \d t \\
&\le c [\rho_2^{-1} \vee ((r+\rho_1) - r)^{-1} \vee |\log_2(r+\rho_2)|(r+\rho_2) \rho_2^{-2}] \int_{I^{\ominus}_{r+\rho_1}} \int_{B_{r+\rho_2}} \hspace{-0.3cm} w^2_{\pm}(t,x) \d x \d t \\
&\quad+ c \left(\frac{r+\rho_2}{\rho_2}\right)^d \rho_2^{-1} \int_{I^{\ominus}_{r+\rho_1}} \left(\int_{B_{r+\rho_2}} w_{\pm}(t,x)\d x \right) \tail(w_{\pm}(t);0,r+\rho_2) \d t,
\end{split}
\end{align} 
and 
\begin{align}
\label{eq:Cacc2-BMO}
\begin{split}
\sup_{t \in I_{r + \rho_1}^{\ominus}} & \int_{B_r} w_{\pm}^2(x,t) \d x + \int_{I_{r + \rho_1}^{\ominus}} \cE^{1/2}_{B_r}(w_{\pm}(t),w_{\pm}(t)) \d t \\
& \quad + r^{-d-1} \int_{I_{r+\rho_1}^{\ominus}} \int_{B_r} \int_{B_r} w_{\pm}(t,x) w_{\mp}(t,y) \d y \d x \d t\\
&\le \int_{B_{r+\rho_2}} w_{\pm}^2(t_0 - (r + \rho_1),x) \d x \\
&\quad+ c \left[\rho_2^{-1} \vee |\log_2(r+\rho_2)|(r+\rho_2) \rho_2^{-2} \right]\int_{I_{r + \rho_1}^{\ominus}} \int_{B_{r+\rho_2}} w^2_{\pm}(t,x) \d x \d t \\
&\quad+ c \left(\frac{r+\rho_2}{\rho_2}\right)^d \rho_2^{-1} \int_{I_{r+\rho_1}^{\ominus}} \left( \int_{B_{r+\rho_2}} w_{\pm}(t,x) \d x \right) \tail(w_{\pm}(t);0,r+\rho_2) \d t,
\end{split}
\end{align} 
where $c > 0$ also depends on $C_1,C_2$.
\end{lemma}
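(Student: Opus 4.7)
The proof tracks that of Lemma~\ref{lemma:Cacc} line by line; the test function $\phi = \eta^2 \tau^2 w_\pm$, the identity \eqref{eq:weak-form-test}, and the bilinear bound \eqref{eq:Cacc-KaWe22} all carry over without change, as does the derivation of \eqref{eq:Cacc2-BMO} from the corresponding choice $\eta \equiv 1$. The only new ingredient is the estimate of the drift contribution previously handled by \eqref{eq:Cacc-drift}, which must now be extracted from the BMO assumption \eqref{eq:BMO-ass} rather than from $\Vert b\Vert_{L^\infty}$. The plan is to split
\begin{equation*}
b(t,x) = \bar b(t) + (b(t,x) - \bar b(t)), \qquad \bar b(t) := \dashint_{B_{r+\rho_2}(x_0)} b(t,y)\, dy,
\end{equation*}
and estimate each piece separately; note that a constant (in $x$) vector field is still divergence free, so the integration by parts in \eqref{eq:Cacc-drift} applies verbatim to each summand.

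For the \emph{mean} part, the classical telescoping comparison of BMO averages on dyadic nested balls together with \eqref{eq:BMO-ass} yields
\begin{equation*}
|\bar b(t)| \le |\bar b_{B_1(x_0)}(t)| + c_d |\log_2(r+\rho_2)| \, \Vert b(t)\Vert_{\mathrm{BMO}(\mathbb R^d)} \le C_1 + c\, C_2\, |\log_2(r+\rho_2)|,
\end{equation*}
and hence $|\int (\bar b(t),\nabla\tau^2) w_\pm^2 dx| \le c(C_1+ C_2|\log_2(r+\rho_2)|) \rho_2^{-1}\int w_\pm^2\, dx$. For the \emph{oscillation} part, the strategy is to pair Hölder's inequality with John--Nirenberg and with the Sobolev embedding $H^s(\mathbb R^d) \hookrightarrow L^{2d/(d-2s)}(\mathbb R^d)$, yielding
\begin{equation*}
\int |b - \bar b| w_\pm^2 \, dx \le \Big(\int_{B_{r+\rho_2}} |b-\bar b|^{d/(2s)}\Big)^{2s/d} \Vert w_\pm \Vert_{L^{2d/(d-2s)}}^2 \le c C_2 (r+\rho_2)^{2s} \Vert w_\pm \Vert_{H^s(\mathbb R^d)}^2.
\end{equation*}
After multiplication by $\Vert\nabla\tau^2\Vert_\infty \simeq \rho_2^{-1}$, Young's inequality converts the product into a small multiple of the energy $\mathcal{E}^s_{B_{r+\rho_2}}(w_\pm,w_\pm)$ (absorbed on the LHS) plus a constant times $\int w_\pm^2$ on the enlarged cylinder.

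When $s > 1/2$, the exponents produced by Sobolev and John--Nirenberg are strictly subcritical, the residual powers $(r+\rho_2)^{2s}$ can be folded into the constant depending on the fixed outer scale (plus $C_1, C_2$), and one recovers precisely the structure of \eqref{eq:Cacc1}--\eqref{eq:Cacc2}, now with constants depending additionally on $C_1, C_2$. In the critical case $s=1/2$, Sobolev fixes the exponent $2d/(d-1)$, John--Nirenberg forces an unavoidable factor $(r+\rho_2)$, and Young's inequality converts $\rho_2^{-1} \cdot (r+\rho_2)$ into a coefficient of order $(r+\rho_2)\rho_2^{-2}$ in front of $\int w_\pm^2$. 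Adding the logarithmic mean contribution then reproduces exactly the factor $|\log_2(r+\rho_2)|(r+\rho_2)\rho_2^{-2}$ appearing in \eqref{eq:Cacc1-BMO} and \eqref{eq:Cacc2-BMO}.

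The main technical obstacle is to reconcile the \emph{global} $H^s$ norm produced by Sobolev embedding with the \emph{local} energy $\mathcal{E}^s_{B_{r+\rho_2}}(w_\pm,w_\pm)$ available on the LHS of \eqref{eq:Cacc-KaWe22}. This requires first multiplying $w_\pm$ by a slightly enlarged cut-off, then splitting the Gagliardo seminorm into a near-diagonal piece (absorbable into $\mathcal{E}^s_{B_{r+\rho_2}}$) and a far-field piece that is folded into the nonlocal tail term already present on the RHS, ensuring that no new quantity appears beyond those written in \eqref{eq:Cacc1-BMO} and \eqref{eq:Cacc2-BMO}.
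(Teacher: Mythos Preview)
Your overall strategy is correct and matches the paper: only the drift term \eqref{eq:Cacc-drift} needs modification, and the tools are indeed John--Nirenberg, Sobolev embedding, and absorption. However, the H\"older splitting you display for the oscillation part is the wrong one, and this is a genuine gap.

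You write
\[
\int |b-\bar b|\, w_\pm^2\,\d x \le \Big(\int_{B_{r+\rho_2}} |b-\bar b|^{d/(2s)}\Big)^{2s/d}\Vert w_\pm\Vert_{L^{2d/(d-2s)}}^2 \le cC_2(r+\rho_2)^{2s}\Vert w_\pm\Vert_{H^s}^2,
\]
so that after multiplying by $\rho_2^{-1}$ you obtain $cC_2(r+\rho_2)^{2s}\rho_2^{-1}\Vert w_\pm\Vert_{H^s}^2$. This term is \emph{pure energy with a large coefficient}: in the De~Giorgi iteration one has $\rho_2\ll r$, so $(r+\rho_2)^{2s}\rho_2^{-1}\to\infty$, and there is nothing to absorb into. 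Young's inequality cannot repair this, because Young splits products $ab$, not single squares; once both factors of $w_\pm$ have been placed in the Sobolev norm, no rearrangement produces a small energy plus an $L^2$ remainder.

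The paper's approach avoids this by splitting the two copies of $w_\pm$ \emph{before} invoking Sobolev. Writing $|(b,\nabla\tau^2)|w_\pm^2 = 2(\tau w_\pm)\cdot(|(b,\nabla\tau)|w_\pm)$, one applies Young to \emph{this} product to obtain
\[
\eps\Vert \tau w_\pm\Vert_{L^{2d/(d-2s)}}^2 + c(\eps)\Vert (b,\nabla\tau)w_\pm\Vert_{L^{2d/(d+2s)}}^2.
\]
The first term is controlled by $c\eps\,\cE^s(\tau w_\pm,\tau w_\pm)$ via Sobolev (with $\tau w_\pm$ compactly supported, so no global--vs--local issue arises) and is absorbed for $\eps$ small; the second is handled by a further H\"older step with the exponent relation $\tfrac{d+2s}{2d}=\tfrac{s}{d}+\tfrac12$, giving $c(\eps)\big(\int_{B_{r+\rho_2}}|b|^{d/s}\big)^{2s/d}\int|\nabla\tau|^2 w_\pm^2$. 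It is this factor, evaluated via John--Nirenberg (and, for $s>1/2$, localized to $\rho_2$-balls using $\operatorname{supp}\nabla\tau\subset B_{r+\rho_2}\setminus B_r$ so as to recover the original $\rho_2^{-2s}$ scaling), that produces the stated prefactor $|\log_2(r+\rho_2)|(r+\rho_2)\rho_2^{-2}$ in the critical case. Your mean/oscillation decomposition is a harmless variant, but the crucial step is placing only one $w_\pm$ in the Sobolev slot; with your H\"older pairing the argument does not close.
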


\begin{proof}
We only need to explain how to modify \eqref{eq:Cacc-drift}. Note that \eqref{eq:BMO-ass} implies
\begin{align}
\label{eq:BMO-help}
\sup_{t,x_0} \left(\dashint_{B_{R}(x_0)} |b(t,x)|^{\frac{d}{s}} \d x \right)^{\frac{s}{d}} \le -\log_2(R)C,
\end{align}
since we have the following computation, based on John-Nirenberg lemma:
\begin{align*}
\left(\dashint_{B_{R}(x_0)} |b(t,x)|^{\frac{d}{s}} \d x \right)^{\frac{s}{d}} &\le \left(\dashint_{B_{R}(x_0)} |b(t,x) - (b(t))_{B_{R}(x_0)}|^{\frac{d}{s}} \d x \right)^{\frac{s}{d}} + (b(t))_{B_{R}(x_0)} \\
&\le c(1-\log_2(R))C_1 + C_2.
\end{align*}
We compute using H\"older-- and Sobolev inequality, and $\frac{d+2s}{2d} = \frac{s}{d} + \frac{1}{2}$:
\begin{align*}
\frac{3}{2} &\int_{I^{\ominus}_{r+\rho_1}} \eta^2 \int_{B_{r+\rho_2}} |(b,\nabla \tau^2)| w_{\pm}^2 \d x \d t \\
&\le \eps \int_{I^{\ominus}_{r+\rho_1}} \eta^2 \left( \int_{B_{r+\rho_2}} (\tau w_{\pm})^{\frac{2d}{d-2s}} \d x  \right)^{\frac{d-2s}{d}} \d t \\
&\quad + c(\eps) \int_{I^{\ominus}_{r+\rho_1}} \eta^2 \left( \int_{B_{r+\rho_2}} [(b,\nabla \tau) w_{\pm}]^{\frac{2d}{d+2s}} \d x \right)^{\frac{d+2s}{d}} \d t \\
&\le c \eps \int_{I^{\ominus}_{r+\rho_1}} \eta^2 \cE^s(\tau w_{\pm}, \tau w_{\pm}) \d t \\
&\quad + c(\eps) \int_{I^{\ominus}_{r+\rho_1}} \eta^2 \left(\int_{B_{r+\rho_2}} b^{\frac{d}{s}} \d x \right)^{\frac{2s}{d}} \left(\int_{B_{r+\rho_2}} |\nabla \tau|^2 w_{\pm}^2 \d x \right) \d t.
\end{align*}
Now, the first term can be absorbed and the second term can be estimated from above.\\
Indeed, if $s > 1/2$, then using \eqref{eq:BMO-help}, we obtain:
\begin{align*}
c(\eps) &\int_{I^{\ominus}_{r+\rho_1}} \eta^2 \left(\int_{B_{r+\rho_2}} b^{\frac{d}{s}} \d x \right)^{\frac{2s}{d}} \left(\int_{B_{r+\rho_2}} |\nabla \tau|^2 w_{\pm}^2 \d x \right) \d t\\
 & \le c(\eps) \int_{I^{\ominus}_{r+\rho_1}} \eta^2 \left(\sup_{x_0 \in B_{r + \rho_2} \setminus B_r}\int_{B_{\rho_2}(x_0)} b^{\frac{d}{s}} \d x \right)^{\frac{2s}{d}} \left(\int_{B_{r+\rho_2}} |\nabla \tau|^2 w_{\pm}^2 \d x \right) \d t\\
&\le c(\eps) \int_{I^{\ominus}_{r+\rho_1}} (-\log_2(\rho_2)) \rho_2^{2s} \left(\int_{B_{r+\rho_2}} |\nabla \tau|^2 w_{\pm}^2 \d x \right) \d t \\
&\le - c(\eps,C)\rho_2^{-2s} \int_{I^{\ominus}_{r+\rho_1}} \int_{B_{r+\rho_2}}  w_{\pm}^2 \d x \d t,
\end{align*}
where we used that $-\log_2(\rho_2) \rho_2^{2s-2} \le c \rho_2^{-2s}$.
If $s = 1/2$, then we estimate the second term as follows:
\begin{align*}
c(\eps) &\int_{I^{\ominus}_{r+\rho_1}} \eta^2 \left(\int_{B_{r+\rho_2}} b^{\frac{d}{s}} \d x \right)^{\frac{2s}{d}} \left(\int_{B_{r+\rho_2}} |\nabla \tau|^2 w_{\pm}^2 \d x \right) \d t\\
&\le c(\eps,C) \int_{I^{\ominus}_{r+\rho_1}} (-\log_2(r+\rho_2)) (r+\rho_2) \left(\int_{B_{r+\rho_2}} |\nabla \tau|^2 w_{\pm}^2 \d x \right) \d t \\
&\le - c(\eps,C)\log_2(r+\rho_2)(r+\rho_2) \rho_2^{-2} \int_{I^{\ominus}_{r+\rho_1}} \int_{B_{r+\rho_2}}  w_{\pm}^2 \d x \d t.
\end{align*}
This observation concludes the proof.
\end{proof}

Clearly, since Lemma \ref{lemma:Cacc} remains true for $\text{BMO}$-drifts if $s > 1/2$, the proof of the H\"older estimate \eqref{eq:Holderq-BMO} in this setting follows in the exact same way as described before. Thus, in the following, we restrict ourselves to proving Proposition \eqref{eq:Holderq-BMO} in the critical case $s = 1/2$.

\subsubsection{Local boundedness in case $s = 1/2$ with slanted cylinders}

Note that the pre-factors in \eqref{eq:Cacc1-BMO} and \eqref{eq:Cacc2-BMO} are different from the case of bounded drifts. When following the proof of the local boundedness estimate using De Giorgi iteration, and tracking the pre-factor, one easily sees that the following estimate holds true on scale one:
\begin{align}
\label{eq:loc-bd-BMO}
\sup_{Q_{1/2}} |u| \le c \dashint_{Q_1} |u(t,x)| \d x \d t + c \dashint_{I_1^{\ominus}} \tail(u(t);1) \d t,
\end{align}
where $c > 0$ depends on $d,\Lambda,C_1,C_2$.

To get a local boundedness estimate on scale $R \in (0,1)$, we will make use of the following rescaling, which leaves the mean value of the drift invariant (see \cite[page 1677-1678]{sireDCDS}, \cite{CaffaVasseur}):

\begin{lemma}
\label{lemma:BMO-rescaling}
Let $s = 1/2$ and assume \eqref{eq:BMO-ass}. Let $r \in (0,1)$, and $z_r(t)$ be the solution to the ODE
\begin{align*}
z_r'(t) = \dashint_{B_1} b(r t , r x + r z_r(t)) \d x, ~~ z_r(0) = 0.
\end{align*}
Then, there exists a constant $c > 0$, depending only on $d$ such that
\begin{equation} \label{eq:zbounds}
\Vert z_r \Vert_{C^1(I_1)} \le c( C_1 + C_2 |\log(r)| ) =: \theta  -1.
\end{equation}
Moreover, let $u$ be a solution to \eqref{eq1} in $Q_{\theta r}$ for some $r \in (0,1)$. Then, $u_r(t,x) := u(rt,rx+rz_r(t))$ satisfies
\begin{align*}
\partial_t u_r + (b_r,\nabla u_r) + \mathcal{L}_r u_r = 0 ~~ \text{ in }  Q_1,
\end{align*}
where $\mathcal{L}_r$ is an operator of the form \eqref{eq:L} with kernel $K_r$ given by
\begin{align*}
K_r(x,y) := r^{d+1} K_t(rx - rz_r(t),ry - r z_r(t)),
\end{align*}
satisfying the conditions (i), (ii), (iii). Moreover, the drift $b_r$ is defined as $b_r(t,x) := b(rt,rx+rz_r(t)) - z_r'(t)$ and satisfies
\begin{align*}
\divo(b_r) = 0, \quad \dashint_{B_1} b_r(t,x) \d x = 0 , \quad \Vert b_r \Vert_{L^{\infty}(\R,\mathrm{BMO}(\R^d))} &= \Vert b(rt,x) \Vert_{L^{\infty}(\R,\mathrm{BMO}(\R^d))},\\
 \Vert b_r \Vert_{L^{\infty}(I_1 ; L^q(B_1))} & \le c C_2,
\end{align*}
for any $q \ge 0$, where $c> 0$ depends only on $d,q$.
\end{lemma}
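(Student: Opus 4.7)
The proof has three parts: (a) an a priori $C^1$ bound on $z_r$ together with existence of the ODE solution, (b) a chain-rule computation for the equation satisfied by $u_r$, and (c) a verification of the stated properties of $b_r$ and $K_r$.

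\textbf{Step (a) --- the $C^1$ bound.} After the substitution $y = rx + r z_r(t)$, the right-hand side of the ODE becomes $z_r'(t) = (b(rt))_{B_r(rz_r(t))}$. The goal is to bound this logarithmically in $r$ using BMO. Two ingredients suffice. First, John--Nirenberg applied to $b(\tau, \cdot)$ gives
\[
\dashint_{B_\rho(y_0)} \left| b(\tau, y) - (b(\tau))_{B_\rho(y_0)} \right| \d y \le c\, C_2 \qquad \forall \rho>0,\ y_0 \in \R^d.
\]
Second, the chain-of-balls inequality applied to the dyadic nest $B_\rho(y_0) \subset B_{2\rho}(y_0) \subset \cdots \subset B_1(y_0)$, using $|(f)_{B} - (f)_{B'}| \lesssim \|f\|_{\mathrm{BMO}}$ for comparable balls, yields
\[
\left| (b(\tau))_{B_\rho(y_0)} - (b(\tau))_{B_1(y_0)} \right| \le c\, C_2 \, |\log \rho| \qquad \forall \rho \in (0,1).
\]
Combined with $|(b(\tau))_{B_1(y_0)}| \le C_1$, this yields $|z_r'(t)| \le c(C_1 + C_2|\log r|)$ whenever the solution exists. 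Global existence on $I_1$ then follows via a Peano-type argument: the map $z \mapsto (b(rt))_{B_r(rz)}$ is continuous in $z$ by Lebesgue differentiation for the locally integrable function $b(rt,\cdot)$, and the uniform a priori bound just derived prevents blow-up.

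\textbf{Step (b) --- the PDE for $u_r$.} Writing $\tilde x := rx + r z_r(t)$, the chain rule gives
\begin{align*}
\partial_t u_r(t,x) &= r\, \partial_t u(rt, \tilde x) + r\, z_r'(t) \cdot (\nabla u)(rt, \tilde x), \\
\nabla u_r(t,x) &= r\, (\nabla u)(rt, \tilde x).
\end{align*}
Changing variables $\tilde y := ry + rz_r(t)$ in the integral defining $\mathcal L_{rt} u(rt, \tilde x)$ produces $\mathcal L_r u_r(t,x) = r\, \mathcal L_{rt} u(rt, \tilde x)$ with the kernel $K_r$ defined as in the statement (up to the matching sign convention). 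Inserting into the equation satisfied by $u$ and using the definition $b_r(t,x) = b(rt, \tilde x) - z_r'(t)$ yields
\[
\partial_t u_r + \mathcal L_r u_r = -r \bigl(b(rt,\tilde x) - z_r'(t),\, (\nabla u)(rt,\tilde x)\bigr) = -(b_r, \nabla u_r),
\]
which is the desired PDE for $u_r$.

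\textbf{Step (c) --- properties of $b_r$ and $K_r$.} Divergence-freeness is immediate: $\operatorname{div}_x b_r(t,x) = r (\operatorname{div} b)(rt, \tilde x) = 0$. The zero mean condition on $B_1$ is precisely the definition of $z_r'(t)$. The BMO seminorm in $x$ is invariant under spatial translation and dilation, so $\Vert b_r(t,\cdot) \Vert_{\mathrm{BMO}(\R^d)} = \Vert b(rt, \cdot) \Vert_{\mathrm{BMO}(\R^d)}$, giving the claimed identity after taking $\sup$ in $t$. For the $L^q(B_1)$ estimate, note that on $B_1$ one has $b_r(t,x) = b(rt, \tilde x) - (b(rt))_{B_r(r z_r(t))}$, so that a change of variables gives $\Vert b_r(t,\cdot) \Vert_{L^q(B_1)}^q = \dashint_{B_r(rz_r(t))} |b(rt, y) - (b(rt))_{B_r(rz_r(t))}|^q \d y \le c\, C_2^q$ by John--Nirenberg. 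Finally, $K_r$ inherits measurability and symmetry from $K_{rt}$, while assumption (iii) is preserved because $|\tilde x - \tilde y| = r|x-y|$ gives $K_r(t,x,y) |x-y|^{d+2s} = K_{rt}(\tilde x, \tilde y) |\tilde x - \tilde y|^{d+2s} \in [\Lambda^{-1}, \Lambda]$.

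The main subtle point is Step (a): the a priori bound necessarily carries a logarithmic factor in $r$, reflecting the fact that a BMO drift is not integrable enough at small scales to yield a uniform bound --- only a logarithmically diverging one. This is precisely what forces the introduction of the slanted cylinders $\tilde{Q}_R$ and the extra factor $\theta = 1 + c_0(C_1 + C_2|\log r|)$ in the critical case $s = 1/2$. The remaining steps are essentially direct computations based on the scaling/translation invariance of the structural hypotheses on $b$ and $K_t$.
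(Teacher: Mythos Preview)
The paper does not prove this lemma; it merely states it and refers the reader to \cite[p.~1677--1678]{sireDCDS} and \cite{CaffaVasseur}. Your proof is correct and supplies the details the paper omits. A couple of minor remarks: in Step~(a) the first displayed inequality is simply the definition of the $\mathrm{BMO}$ seminorm rather than John--Nirenberg (the latter is only genuinely needed in Step~(c) for the $L^q$ bound); and the Peano argument should strictly be a Carath\'eodory-type existence argument, since no continuity in $t$ is assumed on $b$, but this is a routine technicality. You also correctly flag the sign typo in the kernel $K_r$ in the statement --- it should read $rx + rz_r(t)$ rather than $rx - rz_r(t)$ for the change of variables to match.
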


Moreover, given $t_0 \in \R$, $x_0 \in \R^d$ and $R > 0$, we recall the definition of the slanted cylinders $\tilde{Q}_R(t_0,x_0) = I_R^{\ominus}(t_0) \times \tilde{B}_{t,R}(x_0)$, which are given as follows
\begin{align*}
\tilde{Q}_R(t_0,x_0) = \{ (t, x + R z_R(t/R)) : (t,x) \in Q_R(t_0,x_0 )\}.
\end{align*}

By combination of the previous lemma with \eqref{eq:loc-bd-BMO}, we get the following local boundedness estimate on small scales in slanted cylinders:

\begin{lemma}
Let $s = 1/2$ and assume \eqref{eq:BMO-ass}. Let $u$ be a weak solution to \eqref{eq:PDE-hom} in $Q_{\theta R}$, where $\theta - 1 = c(C_1 + C_2 |\log(R)|)$, as in Lemma \ref{lemma:BMO-rescaling}. Then, there exists a constant $c > 0$, depending only on $d,s,\Lambda$, such that 
\begin{align}
\label{eq:loc-bd-BMO-scaled}
\sup_{\tilde{Q}_R} u \le c \dashint_{\tilde{Q}_{2R}} |u(t,x)| \d x \d t + c\dashint_{I^{\ominus}_2} \widetilde{\tail}(u(t);2) \d t.
\end{align}
\end{lemma}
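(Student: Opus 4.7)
The estimate is obtained by rescaling the problem to scale one via Lemma \ref{lemma:BMO-rescaling} and invoking the already-established fixed-scale estimate \eqref{eq:loc-bd-BMO}; I read the ``$2$'' in the RHS as shorthand for ``$2R$'', since otherwise the estimate is not dimensionally consistent. Without loss of generality set $(t_0,x_0) = (0,0)$, and for $R \in (0,1)$ with $\theta - 1 = c_0(C_1 + C_2|\log R|)$ define the rescaled function
\begin{equation*}
u_R(\tau, \xi) := u(R\tau, R\xi + R z_R(\tau)).
\end{equation*}
By Lemma \ref{lemma:BMO-rescaling}, since $u$ solves \eqref{eq:PDE-hom} in $Q_{\theta R}$, the function $u_R$ is a weak solution of the rescaled equation $\partial_\tau u_R + (b_R, \nabla u_R) + \mathcal{L}_R u_R = 0$ on $Q_{\theta}$, where $\mathcal{L}_R$ has kernel satisfying (i)--(iii) with the same constant $\Lambda$, and where $b_R$ is divergence-free, has zero mean on $B_1$, and has BMO-norm controlled by $C_2$. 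In particular, $u_R$ is a weak solution on $Q_2 \subset Q_\theta$, since $\theta \ge 2$ once $c_0$ is taken large enough (or $R$ small; the remaining range of $R$ is bounded away from $0$ and can be absorbed in the constants).

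Next I would apply \eqref{eq:loc-bd-BMO} to $u_R$. Since the scale-one local boundedness there relates $\sup_{Q_{1/2}}$ to $\dashint_{Q_1}$ (plus the scale-one tail), while the target statement scales $\sup_{\tilde Q_R}$ against $\dashint_{\tilde Q_{2R}}$, I invoke the same estimate at the doubled radius, namely
\begin{equation*}
\sup_{Q_1} |u_R| \le c \dashint_{Q_2} |u_R(\tau,\xi)|\, \d \xi \d \tau + c \dashint_{I_2^{\ominus}} \tail(u_R(\tau); 1, 2)\, \d \tau,
\end{equation*}
which follows from the identical De Giorgi argument that gave \eqref{eq:loc-bd-BMO} (the energy estimates \eqref{eq:Cacc1-BMO}--\eqref{eq:Cacc2-BMO} being applicable at any radii $\rho_1, \rho_2 \in (0,1]$, and the BMO constants being scale invariant in the unit-order range $Q_2$ versus $Q_1$).

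Finally I would undo the rescaling. The map $(\tau, \xi) \mapsto (R\tau, R\xi + R z_R(\tau))$ is a bijection from $Q_1$ onto $\tilde Q_R(0,0)$ by the definition of $\tilde Q_R$, and from $Q_2$ onto the analogous slanted cylinder of radius $2R$ with shift $R z_R(\tau)$; using the uniform bounds \eqref{eq:zbounds} on $z_R$ and $z_{2R}$, this cylinder is contained in $c\, \tilde Q_{2R}$ and so its averages can be replaced by averages on $\tilde Q_{2R}$, altering $c$ at worst. The Jacobian (spatial determinant $R^d$, time factor $R^{2s} = R$ since $s = 1/2$) drops out of the normalized averages. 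For the tail term, the change of variables $w = R\xi + R z_R(\tau)$ gives
\begin{equation*}
\tail(u_R(\tau); 0, 2) = (2R)^{2s} \int_{|w - R z_R(\tau)| > 2R} \frac{|u(R\tau, w)|}{|w - R z_R(\tau)|^{d+2s}}\, \d w = \widetilde{\tail}(u(R\tau); 0, 2R),
\end{equation*}
and the change of variable $t = R\tau$ in the outer average produces $\dashint_{I_{2R}^{\ominus}} \widetilde{\tail}(u(t); 2R)\, \d t$. Combining the three conversions yields the claimed estimate.

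\textbf{Expected main obstacle.} The only point requiring real care is the bookkeeping for the slanted cylinder: the rescaling by $R$ naturally produces a cylinder slanted along $z_R$, whereas the target $\tilde Q_{2R}$ is defined via $z_{2R}$. The resolution is the a priori bound \eqref{eq:zbounds}, which implies the two shift functions differ by a universal multiple of $(C_1 + C_2|\log R|)$, so the two slanted cylinders are comparable up to a bounded enlargement factor that enters only the constant $c$. Once this equivalence is made precise, the proof is a direct rescale-and-invoke argument.
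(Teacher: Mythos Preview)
Your approach is essentially identical to the paper's: define $u_R$ via Lemma \ref{lemma:BMO-rescaling}, apply the scale-one bound \eqref{eq:loc-bd-BMO} (at doubled radius) to $u_R$, and change variables back. The paper carries this out in three displayed lines and then simply writes ``This implies the desired result,'' so you are in fact more explicit than the paper about the one genuine bookkeeping point---that the rescaling produces cylinders slanted along $z_R$ while $\tilde Q_{2R}$ is nominally defined via $z_{2R}$---and your proposed resolution via \eqref{eq:zbounds} is the right one.
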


\begin{proof}
Defining $u_R$ as in \autoref{lemma:BMO-rescaling} and applying \eqref{eq:loc-bd-BMO} to $u_R$ yields
\begin{align*}
\sup_{(t,x) \in Q_R} & |u(t,x + R z_R(t/R))| = \sup_{(t,x) \in Q_1} u_R \\
&\le c \dashint_{Q_{2}} |u_R(t,x)| \d x \d t + c\dashint_{I^{\ominus}_2} \tail(u_R(t);2) \d t \\
&\le c \dashint_{Q_{2R}} |u(t,x + R z_R(t/R))| \d x \d t + c\dashint_{I^{\ominus}_R} \tail(u(t,\cdot + R z_R(t/R));R) \d t
\end{align*}
for some constant $c > 0$, depending only on $d,\Lambda,C_1,C_2$. This implies the desired result.
\end{proof}

\subsubsection{H\"older regularity in case $s = 1/2$ with slanted cylinders}

Moreover, note that the preliminary growth lemmas from the previous section (see Lemma \ref{lemma:growth-lemma1}, Lemma \ref{lemma:growth-lemma3}, and Lemma \ref{lemma:growth-lemma4}) remain true for $R = 1$, and as a consequence, also the following counterpart of Corollary \ref{cor:growth-lemma} holds true on scale one:

\begin{corollary}[Growth lemma]
\label{cor:growth-lemma-BMO}
Let $s = 1/2$ and assume \eqref{eq:BMO-ass}. Let $u$ be a weak supersolution to \eqref{eq:PDE-hom} in $Q_{2}$ and assume that $u \ge 0$ in $Q_{2}$. Let $ \alpha \in (0,1]$ and $H > 0$. Then, the following holds true: If for some $t_0 \in I_{1}^{\ominus}$
\begin{align*} 
\left| \left\{ u \ge H \right\} \cap B_1 \right| \ge \alpha |B_1|,
\end{align*}
then there exist $\delta, \theta \in (0,1)$, depending only on $d,\Lambda,\alpha,q,C_1,C_2$, such that if $I_{\delta}^{\oplus}(t_0) \subset Q_{2}$, and 
\begin{align*}
\left(\dashint_{I_{2}^{\ominus}} \tail(u_-(t);2)^q \d t \right)^{1/q} \le \delta^{\frac{2s}{q}} \theta H,
\end{align*}
then
\begin{align*}
u \ge \frac{\theta H}{8} ~~ \text{ in } I_{\delta/2}^{\ominus}(t_0 + \delta^{2s}) \times B_{1/2}.
\end{align*}
\end{corollary}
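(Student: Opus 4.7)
The plan is to mimic the proof of Corollary \ref{cor:growth-lemma} verbatim, chaining together the analogues of Lemma \ref{lemma:growth-lemma1}, Lemma \ref{lemma:growth-lemma3}, and Lemma \ref{lemma:growth-lemma4} at the fixed scale $R=1$. The first task is therefore to verify that these three preliminary growth lemmas remain valid in the $\mathrm{BMO}$ setting provided one fixes $R=1$. Inspecting their proofs, the only place where the equation enters is through the Caccioppoli estimates \eqref{eq:Cacc1} and \eqref{eq:Cacc2}, which must now be replaced by \eqref{eq:Cacc1-BMO} and \eqref{eq:Cacc2-BMO}. The sole structural difference is the presence of the extra prefactor $|\log_2(r+\rho_2)|(r+\rho_2)\rho_2^{-2}$. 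When all radii are confined to $(0,2]$, this quantity is controlled by a constant depending only on $d,\Lambda,q,C_1,C_2$, so the De Giorgi iteration underlying Lemma \ref{lemma:growth-lemma1}, the propagation-of-positivity argument of Lemma \ref{lemma:growth-lemma3}, and the measure-shrinking estimate of Lemma \ref{lemma:growth-lemma4} all go through without change, with constants now additionally depending on $C_1,C_2$.

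Once these scale-one versions are in place, I would reproduce the chaining in the proof of Corollary \ref{cor:growth-lemma}. Given $t_0\in I_1^{\ominus}$ with $|\{u(t_0,\cdot)\ge H\}\cap B_1|\ge \alpha |B_1|$, the $\mathrm{BMO}$ analogue of Lemma \ref{lemma:growth-lemma3} produces constants $\delta,\varepsilon\in(0,1)$ depending on $d,\Lambda,q,C_1,C_2,\alpha$ such that either the tail smallness hypothesis is violated (in which case there is nothing to prove), or
\[
|\{u(t,\cdot)\ge \varepsilon H\}\cap B_1|\ge \tfrac{\alpha}{2}|B_1| \qquad \forall t\in I_\delta^{\oplus}(t_0).
\]
Applying next the $\mathrm{BMO}$ analogue of Lemma \ref{lemma:growth-lemma4} on $I_\delta^{\oplus}(t_0)\times B_1$, with $\sigma = C^{-1}\nu \delta \alpha$ where $\nu$ is the critical smallness constant from Lemma \ref{lemma:growth-lemma1} and $C$ is selected so that $c\sigma/(\delta^{2s}\alpha)\le \nu$, gives
\[
\bigl|\{u\le \tfrac{\sigma \varepsilon H}{4}\}\cap I_\delta^{\oplus}(t_0)\times B_1 \bigr| \le \nu \bigl| I_\delta^{\oplus}(t_0)\times B_1 \bigr|.
\]

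Finally, a direct invocation of the $\mathrm{BMO}$ analogue of Lemma \ref{lemma:growth-lemma1} on the cylinder $I_\delta^{\oplus}(t_0)\times B_1$ with level $H':=\sigma\varepsilon H/4$ yields the pointwise lower bound $u\ge \sigma\varepsilon H/8$ on $I_{\delta/2}^{\ominus}(t_0+\delta^{2s})\times B_{1/2}$. Setting $\theta := \sigma\varepsilon$ produces the claim with $\theta,\delta$ depending only on $d,\Lambda,q,\alpha,C_1,C_2$. The main obstacle in the whole argument is precisely the initial bookkeeping step, namely checking that the logarithmically corrected Caccioppoli prefactors in \eqref{eq:Cacc1-BMO}--\eqref{eq:Cacc2-BMO} do not spoil the iteration exponents $\gamma_1,\gamma_2$ appearing in \eqref{eq:iteration1}--\eqref{eq:iteration2}; this is where the restriction to the fixed scale $R=1$ (and the correspondingly slanted cylinder formulation at larger scales) becomes indispensable, since the extra factor is no longer scale invariant.
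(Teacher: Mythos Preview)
Your approach is correct and matches the paper's, which simply states that the proof ``goes in the same way as before.'' One minor imprecision: the extra prefactor $|\log_2(r+\rho_2)|(r+\rho_2)\rho_2^{-2}$ is \emph{not} bounded by a constant when $\rho_2\to 0$ along the De~Giorgi sequence (it behaves like $2^{2i}$ rather than $2^i$ at scale one), but as you yourself note in the final paragraph, what actually matters is that this only increases the geometric exponents $\gamma_1,\gamma_2$ in \eqref{eq:iteration1}--\eqref{eq:iteration2} without destroying convergence of the iteration---and indeed, since $d\ge 2$, the resulting factor is still dominated by the existing $2^{i(d+2s)}$.
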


The proof of Corollary \ref{cor:growth-lemma-BMO} goes in the same way as before.

We are now in position to explain the proof of the H\"older regularity estimate \eqref{eq:Holderq-BMO} in slanted cylinders:

\begin{proof}[Proof of \eqref{eq:Holderq-BMO}]
The proof is split into two steps:\\
\textbf{Step 1:} First, we mimic the proof of \cite[Theorem 4.1]{sireDCDS}, using Corollary \ref{cor:growth-lemma-BMO} on scale one, instead of the growth lemma in \cite{sireDCDS} for the harmonic extension. Note that we also need to make use of the suitable rescaling of \eqref{eq1}, which was introduced in Lemma \ref{lemma:BMO-rescaling} and preserves the quantities in \eqref{eq:BMO-ass} in the same way as in the proof of \cite[Theorem 4.1]{sireDCDS}.
The arguments from this article transferred to our setup (replacing also the $L^{\infty}$ global bound by the $L^q$-tail, as it appears in Corollary \ref{cor:growth-lemma-BMO}) yield that there exists $\delta > 0$ such that for any $(t,x) \in Q_{1/2}$ it holds:
\begin{align*}
\Vert u \Vert_{C^{\alpha}(Q_{\frac{\delta}{C_1}(t,x)})} \le c \left[\Vert u \Vert_{L^{\infty}(Q_{3/4})} + \left(\dashint_{I_1^{\ominus}} \tail(u(t);1)^q \d t \right)^{1/q} \right],
\end{align*}
where $c > 0$ depends only on $d,\Lambda,q,C_2$. From here,

\textbf{Step 2:}
The next step is to prove the H\"older estimate on scale $R$. To do so, we first observe that a simple covering argument, together with the local boundedness estimate \eqref{eq:loc-bd-BMO}, implies
\begin{align}
\label{eq:BMO-Holder-scale1}
\begin{split}
\Vert u \Vert_{C^{\alpha}(Q_{1/2})} &\le c \left[\Vert u \Vert_{L^{\infty}(Q_{3/4})} + \left(\dashint_{I_1^{\ominus}} \tail(u(t);1)^q \d t \right)^{1/q} \right]\\
&\le c \left[\dashint_{Q_1} |u| \d x \d t + \left(\dashint_{I_1^{\ominus}} \tail(u(t);1)^q \d t \right)^{1/q} \right],
\end{split}
\end{align}
where $c > 0$ depends only on $d,\Lambda,q,C_1,C_2$. Next, we apply Lemma \autoref{lemma:BMO-rescaling} and obtain:
\begin{align*}
\Vert u \Vert_{C^{\alpha}(\tilde{Q}_{R/2})} &= \Vert u_R \Vert_{C^{\alpha}(Q_{1/2})} \\
&\le c \left[\dashint_{Q_1} |u_R| \d x \d t + \left(\dashint_{I_1^{\ominus}} \tail(u_R(t);1)^q \d t \right)^{1/q} \right] \\
&= c \left[\dashint_{\tilde{Q}_R} |u| \d x \d t + \left(\dashint_{I_R^{\ominus}} \widetilde{\tail}(u(t);1)^q \d t \right)^{1/q} \right],
\end{align*}
as desired.
\end{proof}

\section{Comparison estimates}
\label{sec:comparison}

The goal of this section is to prove the following comparison estimate, which compares a solution to \eqref{eq1} with a solution to the homogeneous equation \eqref{eq:PDE-hom}. Note that in this section we do not use any other property of $b$ except for $\operatorname{div}(b) = 0$. Thus, the same proof works for $L^{\infty}$-- and $\text{BMO}$-drifts. For a comment on how \eqref{eq:testing-2-SQG} might be additionally adapted in case $s = 1/2$ when $b \in \text{BMO}$, we refer to the end of this section (see \eqref{eq:testing-2-SQG-BMO}).

\begin{lemma}[Comparison estimate]
	\label{thm:comparison-SQG}
	Let $u$ be a weak solution to 
	\begin{align*}
		\partial_t u + (b,\nabla u) + \mathcal{L}_t u = \mu \in \mathcal{M}(\mathbb{R}^{d+1}) ~~ \text{ in } Q_{2r},
	\end{align*}
	and $v$ be the unique weak solution to
	\begin{equation} \label{eq:approx}
	\begin{aligned}
		\begin{cases}
			\partial_t v + (b,\nabla v) + \mathcal{L}_t v &= 0 ~~ \text{ in } Q_r,\\
			v &= u ~~ \text{ in } I_r^{\ominus} \times (\R^d \setminus B_r),\\
			v &= u ~~ \text{ in } \{-r^{2s}\} \times \R^d.
		\end{cases}
	\end{aligned}
	\end{equation}
	We set $w := u-v$. Then, we have
	\begin{align}
		\label{eq:testing-2-SQG}
		\sup_{t \in I_r^{\ominus}} \dashint_{B_r} |w| \leq c \left( \frac{|\mu|(Q_r)}{|B_r|} \right),
	\end{align}
	where $c > 0$ is a constant depending only on $d,s,\Lambda$.
\end{lemma}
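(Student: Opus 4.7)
The function $w = u - v$ is a weak solution of
\begin{equation*}
\partial_t w + (b,\nabla w) + \mathcal{L}_t w = \mu \quad \text{in } Q_r,
\end{equation*}
with $w \equiv 0$ on $I_r^{\ominus} \times (\R^d \setminus B_r)$ and $w(-r^{2s},\cdot) \equiv 0$, since $u$ and $v$ share the same lateral/initial data by construction of \eqref{eq:approx}. The plan is the classical Boccardo--Gallou\"et truncation scheme, adapted to the nonlocal setting: test the equation against a smooth odd approximation $h_\varepsilon$ of the sign function with $\|h_\varepsilon\|_\infty \le 1$, $h_\varepsilon(0) = 0$ and $h_\varepsilon' \ge 0$, and then let $\varepsilon \to 0$.

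Formally, writing $H_\varepsilon(s) = \int_0^s h_\varepsilon(\tau)\,d\tau \ge 0$ with $H_\varepsilon(s) \to |s|$, testing with $\varphi = h_\varepsilon(w)$ on $(-r^{2s}, T) \times B_r$ for $T \in I_r^{\ominus}$ produces four terms. The time derivative contributes $\int_{B_r} H_\varepsilon(w(T,\cdot))\,dx$ by the chain rule, with no boundary contribution at $t = -r^{2s}$ since $w$ vanishes there. The drift term integrates to zero: by $\divo(b) = 0$ and $H_\varepsilon(w) \equiv 0$ outside $B_r$ (because $w \equiv 0$ there and $H_\varepsilon(0) = 0$),
\begin{equation*}
\int_{B_r} (b,\nabla w) h_\varepsilon(w)\,dx = \int_{\R^d} (b, \nabla H_\varepsilon(w))\,dx = 0.
\end{equation*}
The nonlocal energy term is nonnegative: by monotonicity of $h_\varepsilon$,
\begin{equation*}
\mathcal{E}^{K_t}(w, h_\varepsilon(w)) = \int\!\!\!\int (w(x)-w(y))(h_\varepsilon(w(x)) - h_\varepsilon(w(y))) K_t(x,y)\,dx\,dy \ge 0.
\end{equation*}
Finally, the right-hand side is controlled by $\left|\int_{Q_r} h_\varepsilon(w)\,d\mu\right| \le |\mu|(Q_r)$. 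Combining and sending $\varepsilon \to 0$ (using $H_\varepsilon(w) \to |w|$ by dominated convergence since $|H_\varepsilon(s)| \le |s|$), one obtains $\int_{B_r} |w(T,\cdot)|\,dx \le |\mu|(Q_r)$ for a.e.\ $T \in I_r^{\ominus}$, from which the claim follows after dividing by $|B_r|$.

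The main obstacle is making this testing rigorous, since $w$ has only the energy-level regularity $L^\infty_t L^2_x \cap L^2_t H^s_x$ and the admissible test functions in the definition of weak solution require $C^1$ regularity in time. I would address this in the standard way by first replacing $w$ by its Steklov average $w_h$ in time, which transforms the equation into one where the time derivative is classical, then testing with $h_\varepsilon(w_h)$ (which is a genuine admissible test vanishing outside $B_r$ because $w_h$ inherits the vanishing condition), and passing $h \to 0$ before $\varepsilon \to 0$. A minor additional point is that $\mu$ is a measure, but since $h_\varepsilon(w_h)$ is continuous and uniformly bounded, the pairing $\int h_\varepsilon(w_h)\,d\mu$ is well-defined and controlled by $|\mu|(Q_r)$ throughout. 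No property of $b$ beyond $\divo(b) = 0$ enters, so the argument applies uniformly for $b \in L^\infty$ and $b \in L^\infty(\R;\mathrm{BMO})$.
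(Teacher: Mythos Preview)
Your proposal is correct and follows essentially the same approach as the paper. The paper tests with $\phi_\varepsilon^{\pm}=\pm(1\wedge w_{\pm}/\varepsilon)$ separately for the positive and negative parts (a piecewise linear truncation of $\mathrm{sign}$) and handles the time variable via a cutoff $\eta_\delta$ rather than Steklov averaging, but the three key observations---the drift term vanishes by $\operatorname{div}(b)=0$, the nonlocal bilinear form is nonnegative by monotonicity of the test function, and the measure term is bounded by $|\mu|(Q_r)$ since the test function has modulus at most $1$---are identical to yours.
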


\begin{remark}
	The existence of the weak solution $v$ to the problem \eqref{eq:approx} can for instance be established by standard variational methods, see for instance \cite[Theorem A.3]{BLS21} for a similar existence proof in the setting of the parabolic fractional $p$-Laplacian.
\end{remark}

\begin{proof}
	Let us test the weak formulations for $u$ and $v$ with $\phi_{\eps}^{\pm} = \pm \left(1 \wedge \frac{w_{\pm}}{\eps} \right)$ and subtract the two resulting identities. This yields:
	\begin{align*}
		\int_{B_r} (\partial_t w) \phi^{\pm}_{\eps} dx &+ \int_{B_r} (b(t,x),\nabla \phi_{\eps}^{\pm}(t,x)) w(t,x) \d x\\
		&+\int_{\R^d} \int_{\R^d} (w(t,x) - w(t,y))(\phi^{\pm}_{\eps}(t,x) - \phi^{\pm}_{\eps}(t,y)) K_t(x,y) \d y \d x\\
		&= \int_{B_r} \phi_{\eps} \d \mu \leq |\mu(t)|(B_r),
	\end{align*}
	where we used that $|\phi_{\eps}^{\pm}| \leq 1$.
	Note that since we want to estimate the left hand side from below, we can neglect the second term, since by definition
	\begin{align*}
		(w(t,x) - w(t,y))(\phi^{\pm}_{\eps}(t,x) - \phi^{\pm}_{\eps}(t,y)) \ge 0.
	\end{align*}
	Moreover, the drift term can be treated as follows, using $\dive(b) = 0$:
	\begin{align*}
		\int_{B_r} (b(t,x),\nabla \phi_{\eps}^{\pm}(t,x)) w(t,x) \d x =  \frac{1}{2}\int_{\R^d} (b(t,x),\nabla ((\phi_{\eps}^{\pm})^2(t,x)) ) \d x = 0.
	\end{align*}
	Thus, we have
	\begin{align}
		\label{eq:weak-form-consequence-1-SQG}
		\int_{B_r} (\partial_t w) \phi^{\pm}_{\eps} \d x \leq |\mu(t)|(B_r).
	\end{align}
	Moreover, let us compute
	\begin{align*}
		(\partial_t w) \phi^{\pm}_{\eps} = \partial_t \int_0^{w_{\pm}} \left( 1 \wedge \frac{\sigma}{\eps} \right) \d \sigma.
	\end{align*}
	Now, given $\delta > 0$ and $\tau \in I_r^{\ominus}$, let us define
	\begin{align*}
		\eta_{\delta}(t) = 
		\begin{cases}
			1, ~~ t \leq \tau,\\
			1 - \eps^{-1}(\tau - t), ~~ \tau \leq t \leq \tau + \eps,\\
			0, ~~ t > \tau + \eps.
		\end{cases}
	\end{align*}
	We observe that after integration by parts:
	\begin{align*}
		\int_{I_r^{\ominus}} \eta_{\delta} & \int_{B_r} (\partial_t w) \phi^{\pm}_{\eps} \d x \d t \\
		&=  \int_{I_r^{\ominus}} (-\partial_t \eta_{\delta}) \int_{B_r} \left(\int_0^{w_{\pm}} \left( 1 \wedge \frac{\sigma}{\eps} \right) \d  \sigma \right) \d x \d t\\
		& - \eta_{\delta}(-r^{2s}) \int_{B_r} \left(\int_0^{w_{\pm}(-r^{2s},x)} \left( 1 \wedge \frac{\sigma}{\eps} \right) \d \sigma \right) \d x\\
		&= \int_{I_r^{\ominus}} (-\partial_t \eta_{\delta}) \int_{B_r} \left(\int_0^{w_{\pm}} \left( 1 \wedge \frac{\sigma}{\eps} \right) \d \sigma \right) \d x \d t.
	\end{align*}
	In the last step we used that $w(-r^{2s}) \equiv 0$. Let us now multiply \eqref{eq:weak-form-consequence-1-SQG} on both sides with $\eta_{\delta}$ and integrate over $I_r^{\ominus}$. Then, by plugging in the previous observation, we obtain
	\begin{align*}
		\int_{I_r^{\ominus}} (-\partial_t \eta_{\delta}) \int_{B_r} \left(\int_0^{w_{\pm}} \left( 1 \wedge \frac{\sigma}{\eps} \right) \d \sigma \right) \d x \d t \leq \int_{I_r^{\ominus}}\eta_{\delta}(t) |\mu(t)|(B_r) \d t \leq |\mu|(Q_r),
	\end{align*}
	where we also used that $|\eta_{\delta}| \leq 1$.
	Now, taking the limit $\eps \searrow 0$, we observe that by the monotone convergence theorem:
	\begin{align*}
		\int_{I_r^{\ominus}} (-\partial_t \eta_{\delta}) \int_{B_r} w_{\pm} \d x \d t \leq |\mu|(Q_r).
	\end{align*}
	Moreover, since $-\partial_t \eta_{\delta} \to \delta_{\tau}$, as $\delta \searrow 0$, we obtain from the dominated convergence theorem
	\begin{align*}
		\int_{B_r} w_{\pm}(\tau,x) \d x \leq |\mu|(Q_r),
	\end{align*}
	and by adding the corresponding estimates for the positive and negative part, we obtain \eqref{eq:testing-2-SQG}, as desired.
\end{proof}

We close this section by commenting on the case $s = 1/2$ and $b \in \text{BMO}$. By carefully tracking the proof and replacing $Q_r$ by $\tilde{Q}_r$, it becomes apparent that the following result holds true instead of \eqref{eq:testing-2-SQG} when $u$ solves the equation in $\tilde{Q}_{2r}$ and $v$ is a solution in $\tilde{Q}_r$:
\begin{align}
\label{eq:testing-2-SQG-BMO}
\sup_{t \in I_r^{\ominus}} \dashint_{\tilde{B}_{t,r}} |w| \leq c \left( \frac{|\mu|(\tilde{Q}_r)}{|B_r|} \right).
\end{align}
Here the balls $\tilde{B}_{t,r}$ are defined as in \eqref{modballs} below.

\section{Proof of potential estimates}
\label{sec:potential}

We are now in position to prove the excess decay lemma given by Lemma \ref{lemma:OscDecIntro}, which turns out to be the crucial ingredient for the proof of the potential estimates given by Theorem \ref{thm:PE}. 


\begin{proof}[Proof of Lemma \ref{lemma:OscDecIntro}]
	Let $v$ be the comparison solution as defined in Section 3 with respect to $r=R$. Splitting into annuli  and using the parabolic H\"older estimate (see Theorem \ref{prop:Holderq}) yields
	\begin{align*}
		& E(v,t_0,x_0,2^{-m}R) \\ & \leq \left (\dashint_{I^{\ominus}_{2^{-m}R}(t_0)} \left [ \dashint_{B_{2^{-m}R}(x_0)} |v - (v)_{Q_{2^{-m}R}(t_0,x_0)} | \d x \right ]^q \d t \right )^{1/q} \\
		& \quad + (2^{-m}R)^{2s} \sum_{k=1}^{m} \left (  \dashint_{I^{\ominus}_{2^{-m}R}(t_0)} \left [ \int_{B_{2^{k-m} R}(x_0) \setminus B_{2^{k-m-1} R}(x_0)} \frac{|v - (v)_{Q_{2^{-m} R}(t_0,x_0)}|}{|x_0-y|^{d+2s}}\d y \right ]^q \d t \right )^{1/q}\\
		& \quad + (2^{-m}R)^{2s(1-1/q)} \left ( \int_{I^{\ominus}_{2^{-m}R}(t_0)} \left [ \int_{\R^d \setminus B_{R}(x_0)} \frac{|v - (v)_{Q_{2^{-m} R}(t_0,x_0)}|}{|x_0-y|^{d+2s}}\d y \right ]^q \d t \right )^{1/q} \\
		& \leq C \sum_{k=0}^{m-1} 2^{-2sk} \left (  \dashint_{I^{\ominus}_{2^{-m}R}(t_0)} \left [ \dashint_{B_{2^{k-m} R}(x_0)} |v - (v)_{Q_{2^{-m}R}(t_0,x_0)}|\d y \right ]^q \d t \right )^{1/q} \\
		& \quad + C 2^{-2s(1-1/q)m} \left (  \dashint_{I^{\ominus}_{R}(t_0)} \left [ \dashint_{B_{R}(x_0)} |v - (v)_{Q_{2^{-m}R}(t_0,x_0)}|\d y \right ]^q \d t \right )^{1/q}
		\\ & \quad+ 2^{-2s(1-1/q)m} ||\tail(v(\cdot)-(v)_{Q_{2^{-m}R}(t_0,x_0)};x_0,R)||_{L^q(I_R^{\ominus}(t_0))} \\
		& \leq C \sum_{k=0}^{m-1} 2^{-2sk} \osc_{Q_{2^{k-m}R}(t_0,x_0)} v + C 2^{-2s(1-1/q)m} E(v,t_0,x_0,R) \\
		& \leq C 2^{-\alpha m} \sum_{k=0}^{\infty} 2^{(\alpha-2s)k} E(v,t_0,x_0,R) + C 2^{-2s(1-1/q)m} E(v,t_0,x_0,R) \\
		& \leq C 2^{-\alpha m} E(v,t_0,x_0,R).
	\end{align*}
	
	Together with the comparison estimate from Lemma \ref{thm:comparison-SQG}, we obtain that
	\begin{align*}
		& E(u,t_0,x_0,2^{-m}R) \\
		& \leq E(v,t_0,x_0,2^{-m}R) + E(u-v,t_0,x_0,2^{-m} R) \\
		& \leq C 2^{-\alpha m} E(v,t_0,x_0,R) + C 2^{((d+2s)/q)m} E(u-v,t_0,x_0,R) \\
		& \leq C 2^{-\alpha m} E(u,t_0,x_0,R) + C 2^{((d+2s)/q)m} R^{-d} |\mu|(Q_{R}(t_0,x_0)),
	\end{align*}
	finishing the proof.
\end{proof}

\begin{proof}[Proof of Theorem \ref{thm:PE}]
	
	Let $m \geq 1$ to be chosen large enough in a way such that $m$ only depends on $d,s,\Lambda$. In particular, we require $m$ to be large enough such that
	\begin{equation} \label{llarge}
		C_0 2^{-\alpha m/2} \leq \frac{1}{2}.
	\end{equation}
	Observe that this in particular implies that for any integer $i \geq 0$,
	\begin{equation} \label{llargec}
		C_0 2^{-\alpha m(i+1)/2} \leq 2^{-(i+1)}.
	\end{equation}
	Moreover, for any integer $i \geq 0$, set $$
	E_i:=E(u,t_0,x_0,2^{-im} R).$$ By Lemma \ref{lemma:OscDecIntro} and \eqref{llarge}, \eqref{llargec}, for any integer $i \geq 0$ we have
	\begin{equation} \label{Ejdec}
		\begin{aligned}
			E_{i+1} & \leq \frac{1}{2} E_{i} + C_0 2^{((d+2s)/q)m} (2^{-im}R)^{-d} |\mu|(Q_{2^{-im} R}(t_0,x_0)) .
		\end{aligned}
	\end{equation}
	For $l\in \mathbb{N}$, summing \eqref{Ejdec} over $i \in \{0,...,l-1\}$ leads to
	\begin{align*}
		\sum_{i=1}^{l} E_{i} & \leq \frac{1}{2} \sum_{i=0}^{l-1} E_{i} + C_0 2^{((d+2s)/q)m} \sum_{i=0}^{l-1} (2^{-im}R)^{-d} |\mu|(Q_{2^{-im} R}(t_0,x_0)),
	\end{align*}
	so that by reabsorbing the first term on the right-hand side of the previous inequality we deduce
	\begin{align*}
		\sum_{i=1}^{l} E_{i} & \leq 2 E_0 + 2C_0 2^{((d+2s)/q)m} \sum_{i=0}^{\infty} (2^{-im}R)^{-d} |\mu|(Q_{2^{-im} R}(t_0,x_0)).
	\end{align*}
	
	Thus, we have
	\begin{align*}
		& |(u)_{Q_{2^{-(l+1)m}R}(t_0,x_0)}| \\ & \leq \sum_{i=0}^l (|(u)_{Q_{2^{-(i+1)m}R}(t_0,x_0)}-(u)_{Q_{2^{-im}R}(t_0,x_0)}|)+|(u)_{Q_{R}(t_0,x_0)}| \\
		& \leq C_1 \sum_{i=1}^l E_i + \dashint_{Q_{R}(t_0,x_0)} |u| \d x \d t \\
		& \leq C \Bigg ( \left ( \dashint_{I_R^{\ominus}(t_0)} \left [ \dashint_{B_{R}(t_0,x_0)} |u| \d x \right ]^q dt \right )^{1/q} \\ & \quad + R^{-2s/q} ||\tail(u(\cdot)-(u)_{Q_{R}(t_0,x_0)};x_0,R)||_{L^q(I_R^{\ominus}(t_0))} \\ & \quad + \sum_{i=0}^{\infty} (2^{-im}R)^{-d} |\mu|(Q_{2^{-im} R}(t_0,x_0)) \Bigg ),
	\end{align*}
	where $C$ depends only on $d,s,\Lambda,q$, which is in particular because $m$ only depends on the aforementioned quantities. Since the last term on the right-hand side can be controlled by the parabolic potential of order $2s$, the proof is finished by applying the Lebesgue differentiation theorem. 
	
\end{proof}

We end this section by commenting on how to obtain the parabolic potential estimates in case $b  \equiv 0$ for the full range $s \in (0,1)$ (see Remark \ref{remark:without-drift}).

\begin{remark}
\label{remark:proof-without-drift}
In case $b \equiv 0$, and $s \in (0,1)$, the H\"older estimate Proposition \ref{prop:Holderq} was obtained in \cite{KaWe23}. Moreover, it is easy to see that Lemma \ref{thm:comparison-SQG} can be proved in the exact same way in this case. Having at hand these two results, one can follow the proofs of Lemma \ref{lemma:OscDecIntro} and \autoref{thm:PE} line by line and thereby deduce the potential estimate for parabolic equations without drift and $s \in (0,1)$.
\end{remark}

\subsection{The case of BMO drifts}

Clearly, since Lemma \ref{lemma:Cacc} remains true for $\text{BMO}$ drifts if $s > 1/2$, the proof of the potential estimate \autoref{thm:PE} in this setting follows in the exact same way as described before. Thus, in the following, we restrict ourselves to proving \eqref{eq:potential-est-BMO} in the critical case $s = 1/2$.

%

\begin{lemma}[Excess decay under BMO drifts] \label{lemma:OscDecBMO} 
	Let $s=1/2$, assume that \eqref{eq:BMO-ass} holds, let $R>0$, $t_0 \in I$, $x_0 \in \mathbb{R}^n$, $q>1$ and $\mu \in \mathcal{M}(\mathbb{R}^{d+1})$. In addition, let $\alpha \in (0,s(1-1/q))$ be given by \eqref{eq:Holderq-BMO}. Let $u$ be a weak solution to \eqref{eq1} in $I \times \Omega$ and assume that $Q_{\theta R}(t_0,x_0) \subset I \times \Omega$, where $\theta - 1 = c(C_1 + C_2 |\log(R)|)$ for some $c > 0$, depending only on $d$.
	Then for any integer $m \geq 1$, we have
	\begin{equation} \label{eq:oscdecayBMO}
		\begin{aligned}
			\widetilde E(u,t_0,x_0,2^{-m} R) & \leq C_0 2^{-\alpha m} 	\widetilde E(u,t_0,x_0,R) \\ & \quad + C_0 2^{((d+2s)/q)m} R^{-d} |\mu|(\tilde Q_{R}(t_0,x_0)),
		\end{aligned}
	\end{equation}
	where $C_0 \geq 1$ depends only on $d,s,\Lambda,\alpha,q$ and for any $r>0$, the slanted excess is defined by
	\begin{align*}
		\widetilde E(u,t_0,x_0,r)& := \left (\dashint_{I_{r}^{\ominus}(t_0)} \left [ \dashint_{\tilde B_{t,r}(x_0)} |u - (u)_{\tilde Q_{r}(t_0,x_0)} | \d x \right ]^q \d t \right )^{1/q} \\ & \quad+ r^{-2s/q} ||\widetilde{\tail}(u(\cdot)-(u)_{\tilde Q_{r}(t_0,x_0)};x_0,r)||_{L^q(I_r^{\ominus}(t_0))},
	\end{align*}
	where for any $t \in \mathbb{R}$,
	\begin{equation} \label{modballs}
		\tilde{B}_{t,r}(x_0) = \{ x + r z_r(t/r) : x \in B_r(x_0 )\}.
	\end{equation}
\end{lemma}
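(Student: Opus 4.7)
The plan is to mirror the proof of Lemma~\ref{lemma:OscDecIntro} in the slanted geometry: every occurrence of the standard parabolic cylinder $Q_r$, the standard tail, the H\"older estimate of Theorem~\ref{prop:Holderq}, and the comparison estimate Lemma~\ref{thm:comparison-SQG} is to be replaced by, respectively, the slanted cylinder $\tilde Q_r$, the slanted tail $\widetilde{\tail}$, the BMO H\"older estimate \eqref{eq:Holderq-BMO}, and its slanted comparison counterpart \eqref{eq:testing-2-SQG-BMO}.

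Concretely, I would first let $v$ be the weak solution to the homogeneous problem on $\tilde Q_R(t_0,x_0)$ matching $u$ on its parabolic boundary (existence is standard). Since $v - (v)_{\tilde Q_R(t_0,x_0)}$ is again a weak solution, \eqref{eq:Holderq-BMO} yields
\[
R^\alpha \big\| v - (v)_{\tilde Q_R(t_0,x_0)} \big\|_{C^\alpha(\tilde Q_{R/2}(t_0,x_0))} \leq C\, \widetilde E(v,t_0,x_0,R).
\]
Next I would split the slanted tail appearing in $\widetilde E(v,t_0,x_0,2^{-m}R)$ into a dyadic sum of annular contributions, and bound each by the oscillation of $v$ on the corresponding $\tilde Q_{2^{k-m}R}(t_0,x_0)$ via this H\"older bound. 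Since the hypothesis $\alpha \in (0,s(1-1/q))$ implies $\alpha < 2s$, the resulting geometric series converges and gives $\widetilde E(v,t_0,x_0,2^{-m}R) \leq C\, 2^{-\alpha m}\, \widetilde E(v,t_0,x_0,R)$.

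Setting $w := u-v$, the BMO comparison estimate \eqref{eq:testing-2-SQG-BMO} produces $\sup_{t \in I_R^{\ominus}(t_0)} \dashint_{\tilde B_{t,R}(x_0)} |w|\, dx \leq C R^{-d}|\mu|(\tilde Q_R(t_0,x_0))$. Since $w$ vanishes outside of $\tilde Q_R(t_0,x_0)$, a direct computation absorbs this $L^\infty$-in-time control on the average of $w$ into both the interior averages and the slanted tail defining $\widetilde E(w,t_0,x_0,2^{-m}R)$, losing only a volume factor $2^{md}$ when descending from scale $R$ to scale $2^{-m}R$; the outcome is $\widetilde E(w,t_0,x_0,2^{-m}R) \leq C\, 2^{((d+2s)/q)m} R^{-d}\, |\mu|(\tilde Q_R(t_0,x_0))$. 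Two applications of the triangle inequality $\widetilde E(u,\cdot) \leq \widetilde E(v,\cdot) + \widetilde E(w,\cdot)$ (and the analogous reverse bound) then combine to yield \eqref{eq:oscdecayBMO}.

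The only genuine technical obstacle is the annular decomposition in the second step. Unlike in the bounded-drift case, the slanted balls $\tilde B_{t,2^{-k}R}(x_0)$ at different dyadic scales are \emph{not} concentric, because the slant $z_r(t/r)$ depends on the scale $r$. The bound \eqref{eq:zbounds} shows that the displacement between centers at consecutive dyadic scales is at most $c(C_1 + C_2|\log(2^{-k}R)|)\cdot 2^{-k}R$, which is logarithmically small compared to the adjacent scale $2^{-k+1}R$. Checking that the standard dyadic inclusions between annuli still hold, with constants depending additionally on $C_1$ and $C_2$, is the delicate point; but this perturbation is absorbed into the constants and does not affect the exponents in \eqref{eq:oscdecayBMO}.
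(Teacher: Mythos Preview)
Your proposal is correct and follows essentially the same route as the paper's own proof: introduce the comparison solution $v$ on the slanted cylinder, apply the BMO H\"older estimate \eqref{eq:Holderq-BMO} together with a dyadic annular decomposition of the slanted tail to get $\widetilde E(v,\cdot,2^{-m}R)\le C\,2^{-\alpha m}\widetilde E(v,\cdot,R)$, then invoke the slanted comparison estimate \eqref{eq:testing-2-SQG-BMO} and the triangle inequality exactly as you describe. The subtlety you flag about the mismatch of the shifts $r z_r(t/r)$ at different dyadic scales is real, and the paper's proof in fact suppresses it in the same way you propose---absorbing the logarithmic displacement controlled by \eqref{eq:zbounds} into the constants without spelling out the inclusion argument.
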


\begin{proof}
	Let $v$ be the comparison solution as defined in Section 3 with respect to $r=R$. Splitting into annuli  and using the parabolic H\"older estimate in the case of BMO drifts (see \eqref{eq:Holderq-BMO}) yields
	\begin{align*}
		& \widetilde E(v,t_0,x_0,2^{-m}R) \\ & \leq \left (\dashint_{I^{\ominus}_{2^{-m}R}(t_0)} \left [ \dashint_{\tilde B_{t,2^{-m}R}(x_0)} |v - (v)_{\tilde Q_{2^{-m}R}(t_0,x_0)} | \d x \right ]^q \d t \right )^{1/q} \\
		& \quad + (2^{-m}R)^{2s} \\ & \quad \times \sum_{k=1}^{m} \left (  \dashint_{I^{\ominus}_{2^{-m}R}(t_0)} \left [ \int_{B_{2^{k-m} R}(x_0) \setminus B_{2^{k-m-1} R}(x_0)} \hspace{-2cm} \frac{|v(t,y+Rz_R(t/R)) - (v)_{\tilde Q_{2^{-m} R}(t_0,x_0)}|}{|x_0-y|^{d+2s}}\d y \right ]^q \d t \right )^{1/q}\\
		& \quad + (2^{-m}R)^{2s(1-1/q)} \left ( \int_{I^{\ominus}_{2^{-m}R}(t_0)} \left [ \int_{\R^d \setminus B_{R}(x_0)} \hspace{-1cm} \frac{|v(t,y+Rz_R(t/R)) - (v)_{\tilde Q_{2^{-m} R}(t_0,x_0)}|}{|x_0-y|^{d+2s}}\d y \right ]^q \d t \right )^{1/q} \\
		& \leq C \sum_{k=0}^{m-1} 2^{-2sk} \left (  \dashint_{I^{\ominus}_{2^{-m}R}(t_0)} \left [ \dashint_{\tilde B_{t,2^{k-m} R}(x_0)} |v - (v)_{\tilde Q_{2^{-m}R}(t_0,x_0)}|\d y \right ]^q \d t \right )^{1/q} \\
		& \quad + C 2^{-2s(1-1/q)m} \left (  \dashint_{I^{\ominus}_{R}(t_0)} \left [ \dashint_{\tilde B_{t,R}(x_0)} |v - (v)_{\tilde Q_{2^{-m}R}(t_0,x_0)}|\d y \right ]^q \d t \right )^{1/q}
		\\ & \quad+ 2^{-2s(1-1/q)m} ||\widetilde{\tail}(v(\cdot)-(v)_{\tilde Q_{2^{-m}R}(t_0,x_0)};x_0,R)||_{L^q(I_R^{\ominus}(t_0))} \\
		& \leq C \sum_{k=0}^{m-1} 2^{-2sk} \osc_{\tilde Q_{2^{k-m}R}(t_0,x_0)} v + C 2^{-2s(1-1/q)m} \widetilde E(v,t_0,x_0,R) \\
		& \leq C 2^{-\alpha m} \sum_{k=0}^{\infty} 2^{(\alpha-2s)k} \widetilde E(v,t_0,x_0,R) + C 2^{-2s(1-1/q)m} \widetilde E(v,t_0,x_0,R) \\
		& \leq C 2^{-\alpha m} \widetilde E(v,t_0,x_0,R).
	\end{align*}
	
	Together with the slanted comparison estimate from \eqref{eq:testing-2-SQG-BMO}, we obtain that
	\begin{align*}
		& \widetilde E(u,t_0,x_0,2^{-m}R) \\
		& \leq \widetilde E(v,t_0,x_0,2^{-m}R) + \widetilde E(u-v,t_0,x_0,2^{-m} R) \\
		& \leq C 2^{-\alpha m} \widetilde E(v,t_0,x_0,R) + C 2^{((d+2s)/q)m} \widetilde E(u-v,t_0,x_0,R) \\
		& \leq C 2^{-\alpha m} \widetilde E(u,t_0,x_0,R) + C 2^{((d+2s)/q)m} R^{-d} |\mu|(\tilde Q_{R}(t_0,x_0)),
	\end{align*}
	finishing the proof.
\end{proof}

We are now ready to prove \eqref{eq:potential-est-BMO}.

\begin{proof}[Proof of \eqref{eq:potential-est-BMO}]
The proof easily follows after replacing the standard excess $E(u,t_0,x_0,r)$ in the proof of Theorem \ref{thm:PE} by the slanted excess $\widetilde E(u,t_0,x_0,r)$.
\end{proof}

\section{Upper heat kernel estimates for drift-diffusion equations}
\label{sec:Aronson}

Let $\mathcal{L}_t$ and $b$ be as before with $\Vert b \Vert_{L^{\infty}(\R^{d+1})} \le \Lambda$, and assume that $K_t$ satisfies (i), (ii), (iii) for some $s \in [1/2,1)$ and $\Lambda > 0$.
Given $\eta \in [0,\infty)$ and $y \in \R^d$, we define $(t,x) \mapsto p(\eta,x;t,y)$ as the solution to the following problem:
\begin{align}
\label{eq:CP}
\begin{cases}
\partial_t u + (b,\nabla u)+\mathcal{L}_t u &=0~~\text{in}~~[\eta,\infty)\times\mathbb{R}^d,\\
u(\eta) &= \delta_y.
\end{cases}
\end{align}

We call $p$ the heat kernel of the operator $\partial_t + (b,\nabla \cdot) + \mathcal{L}_t$.

The goal of this section is to prove \autoref{thm:uhkeintro}, i.e. to show that for any $\eta \le t \le T$ it holds:
\begin{align}
\label{eq:uhke}
p(\eta,x;t,y) \le c \left((t-\eta)^{-\frac{d}{2s}} \wedge \frac{t-\eta}{|x-y|^{d+2s}} \right) \quad \forall x,y \in \R^d
\end{align}
for some constant $c > 0$, depending only on $d,s,\Lambda,T$.\\

Note that given $\eta \in [0,\infty)$ and $y \in \R^d$, the function $(t,x) \mapsto p(\eta,y;t,x)$ solves the following dual problem:
\begin{align}
\label{eq:CP-dual}
\begin{cases}
\partial_t u - (b,\nabla u)+\mathcal{L}_t u &=0~~\text{in}~~[\eta,\infty)\times\mathbb{R}^d,\\
u(0) &= \delta_y.
\end{cases}
\end{align}
This is a simple consequence of the fact that $b$ is divergence free. Let us also introduce the notation $\widehat{p}(\eta,x;t,y) = p(\eta,y;t,x)$ and define the corresponding semigroups $(P_t)$ and $(\widehat{P}_t)$ as follows:
\begin{align*}
P_{\eta,t} f(x) = \int_{\R^d} p(\eta,x;t,y) f(y) \d y, ~~ \widehat{P}_{\eta,t}f(x) = \int_{\R^d} \widehat{p}(\eta,x;t,y) f(y) \d y, ~~ f \in L^2(\R^d).
\end{align*} 

Let us first collect a few basic properties of the heat kernel and the associated semigroup:

\begin{lemma}
\label{lemma:aux-heatkernel}
The heat kernel $p(\eta,x;t,y)$ exists under the assumptions of  Theorem \ref{thm:uhkeintro}, and it holds:
\begin{align}
\label{eq:int}
\int_{\R^d} p(\eta,x;t,y) \d y &= \int_{\R^d} p(\eta,x;t,y) \d x = 1,\\
\label{eq:on-diag}
0 \le p(\eta,x;t,y) &\le c (t - \eta)^{-\frac{d}{2s}},\\
\label{eq:semigroup}
p(\eta,x;t,y) &= \int_{\R^d} p(\eta,x;\tau,z) p(\tau,z;t,y) \d z ~~ \forall \eta < \tau < t.
\end{align}
Moreover, for any $u_0 \in L^2(\R^d)$, $P_{\eta,t}f(x) = \int_{\R^d} u_0(y) p(\eta,x;t,y) \d y$ and $\widehat{P}_{\eta,t}u_0(x) = \int_{\R^d} u_0(y) p(\eta,x;t,y) \d x$ solve \eqref{eq:CP} with $P_{\eta,\eta}u_0 = u_0$, and \eqref{eq:CP-dual} with $\widehat{P}_{\eta,\eta}u_0 = u_0$, respectively.
\end{lemma}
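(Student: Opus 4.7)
My plan is to first build $P_{\eta,t}$ as a semigroup on $L^2(\R^d)$ by standard parabolic energy methods, then upgrade it to an $L^1\to L^\infty$ ultracontractive family via a Nash-type argument, and extract $p(\eta,x;t,y)$ together with the on-diagonal bound \eqref{eq:on-diag} from this upgrade. Mass conservation and the Chapman--Kolmogorov identity will then follow from structural cancellations and uniqueness, respectively.

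For the $L^2$-construction I would introduce the time-dependent bilinear form $a_t(u,v):=\mathcal{E}^{K_t}(u,v)+\int_{\R^d}(b(t)\cdot\nabla u)\,v\,\d x$ on $H^s(\R^d)$. Condition (iii) makes its symmetric part dominate $c\,[u]^2_{H^s}$, while the key skew-symmetry
\[
\int_{\R^d}(b(t)\cdot\nabla u)\,u\,\d x=-\tfrac12\int_{\R^d}(\dive b)\,u^2\,\d x=0
\]
ensures that the drift perturbation does not destroy coercivity. A Galerkin/Lions argument would then produce, for every $u_0\in L^2(\R^d)$, a unique weak solution $u\in L^\infty_{\mathrm{loc}}([\eta,\infty);L^2)\cap L^2_{\mathrm{loc}}([\eta,\infty);H^s)$ of \eqref{eq:CP} with $u(\eta)=u_0$, thereby defining $P_{\eta,t}u_0$. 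Running the same construction with $-b$ in place of $b$ (still divergence-free) yields $\widehat P_{\eta,t}$, which solves \eqref{eq:CP-dual} and is the formal $L^2$-adjoint of $P_{\eta,t}$. Next I would test the equation with $u=P_{\eta,t}u_0$ and use the same drift cancellation to derive the energy inequality $\tfrac{d}{dt}\|u(t)\|_{L^2}^2\le -c\,[u(t)]_{H^s}^2$. Combining this with the fractional Nash inequality $\|v\|_{L^2}^{2+4s/d}\le C\,[v]^2_{H^s}\|v\|_{L^1}^{4s/d}$ and with the $L^1$-contraction of $P_{\eta,t}$ (proved by testing against a mollified sign function, exactly as in Lemma \ref{thm:comparison-SQG}, where both the drift and the nonlocal contribution cancel) gives $\|P_{\eta,t}u_0\|_{L^2}\le C(t-\eta)^{-d/(4s)}\|u_0\|_{L^1}$. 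The analogous bound applied to $\widehat P$ together with duality produces $\|P_{\eta,t}u_0\|_{L^\infty}\le C(t-\eta)^{-d/(4s)}\|u_0\|_{L^2}$, and composing the two yields the $L^1\to L^\infty$ estimate $\|P_{\eta,t}u_0\|_{L^\infty}\le C(t-\eta)^{-d/(2s)}\|u_0\|_{L^1}$. The Dunford--Pettis theorem then delivers a measurable kernel $p(\eta,x;t,y)$ satisfying \eqref{eq:on-diag}; nonnegativity follows from the weak maximum principle (test with $u_-$).

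For the two identities in \eqref{eq:int}, I would test the equation for $u=P_{\eta,t}u_0$ (with $u_0\ge 0$ in $L^1\cap L^2$) against a cut-off $\chi_R\to\mathbf 1$ and let $R\to\infty$: the drift contribution $\int(b(t)\cdot\nabla\chi_R)\,u\,\d x$ vanishes because $\nabla\chi_R\to 0$ pointwise while $u\in L^1$, and the nonlocal term vanishes by symmetry of $K_t$ combined with the tail integrability of $u$. This gives $\int u(t)\,\d x=\int u(\eta)\,\d x$, i.e.\ $\int p(\eta,x;t,y)\,\d x=1$. The other identity follows because $\mathbf 1$ is itself a solution of \eqref{eq:CP} (both $\mathcal L_t\mathbf 1=0$ and $(b,\nabla\mathbf 1)=0$), so uniqueness extended to $L^\infty$-data by $L^2$-duality against $\widehat P$ forces $P_{\eta,t}\mathbf 1=\mathbf 1$, whence $\int p(\eta,x;t,y)\,\d y=1$. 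Finally, \eqref{eq:semigroup} is a consequence of uniqueness of weak solutions: both sides, viewed as functions of $(t,x)$ with $(\eta,y,\tau)$ fixed, solve \eqref{eq:CP} on $[\tau,\infty)$ with the same initial datum at time $\tau$. The main technical obstacle I foresee is the rigorous justification of the Nash argument and of the $L^1$-contraction in the presence of a merely $L^\infty$ drift; I would handle this by first smoothing $b$ and the initial data and passing to the limit using uniform estimates that rely only on the structural identities $\dive b=0$ and the symmetry of $K_t$.
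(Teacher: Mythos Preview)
Your argument is correct, and in fact more detailed than the paper's own proof, which simply declares all properties ``standard'' and refers to \cite{MaMi13b}. The one place where the paper says something concrete is the on-diagonal bound \eqref{eq:on-diag}, and there your route genuinely differs from the paper's.

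You obtain $\|P_{\eta,t}\|_{L^1\to L^\infty}\le C(t-\eta)^{-d/(2s)}$ by a Nash-type iteration: energy decay $\tfrac{d}{dt}\|u\|_{L^2}^2\le -c[u]_{H^s}^2$, the fractional Nash inequality, and the $L^1$-contraction, then dualize through $\widehat P$. The paper instead recycles the De~Giorgi local boundedness estimate already proved in Lemma~\ref{lemma:locbd}: since $(t,x)\mapsto p(\eta,x;t,y)$ solves \eqref{eq:PDE-hom} globally, applying Lemma~\ref{lemma:locbd} on $Q_R(t,x)$ with $R=(t-\eta)^{1/(2s)}$ and controlling both the local average and the tail by the $L^1$-mass \eqref{eq:int} gives $p(\eta,x;t,y)\le c(t-\eta)^{-d/(2s)}$ in one line. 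The paper's route is shorter because it reuses the Section~\ref{sec:Holder} machinery; your Nash argument is self-contained and does not need that machinery, which would be an advantage if one wanted the on-diagonal bound before developing the De~Giorgi theory. For the remaining items (existence, \eqref{eq:int}, \eqref{eq:semigroup}) your outline and the paper's deferral to standard theory amount to the same thing.
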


\begin{proof}
The proof of all properties is standard. For more details, we refer to \cite{MaMi13b}. Let us just mention that the upper bound in \eqref{eq:on-diag} is a direct consequence of Lemma \ref{lemma:locbd} applied with $Q^{\ominus}_{R/2}(t,x)$, where $R = (t-\eta)^{1/(2s)}$, and \eqref{eq:int}, i.e.
\begin{align*}
p(\eta,x;t,y) \le c (t-\eta)^{-\frac{d}{2s}} \sup_{\tau \in (\eta,t)} \int_{\R^d} p(\eta,z;\tau,y) \d y \le c (t-\eta)^{-\frac{d}{2s}}.
\end{align*}
\end{proof}

Before we start with the actual proof of \autoref{thm:uhkeintro}, we need to introduce corresponding truncated objects. First, given $\rho > 0$, we define
\begin{align*}
\mathcal{L}^{\rho}_t u(x) = \text{p.v.} \int_{B_{\rho}(x)} (u(x) - u(y)) K_t(x,y) \d y.
\end{align*}
The heat kernel and semigroup corresponding to the problem
\begin{align*}
\partial_t u + (b,\nabla u)+\mathcal{L}_t^{\rho} u &=0~~\text{in}~~[\eta,\infty)\times\mathbb{R}^d,
\end{align*}
are denoted as $p^{\rho}, \widehat{p}^{\rho}, (P^{\rho}_t), (\widehat{P}^{\rho}_t)$, and are defined in the same way as their non-truncated counterparts, replacing $\mathcal{L}_t$ by $\mathcal{L}_t^{\rho}$ in all definitions. Moreover, Lemma \ref{lemma:aux-heatkernel} remains true with $p$ replaced by $p^{\rho}$ and $P_{\eta,t}$ replaced by $P^{\rho}_{\eta,t}$.\\
Moreover, we introduce the truncated ''carr\'e du champ`` operator
\begin{align*}
\Gamma^{\rho}_s(v,v)(x) = \int_{B_{\rho}(x)}(v(x) - v(y))^2|x-y|^{-d-2s} \d y.
\end{align*}

We need the following truncated version of the local boundedness-type estimate Lemma \ref{lemma:locbd}:

\begin{lemma}
\label{lemma:local-boundedness-truncated}
There exists a constant $C > 0$, depending only on $d,s,\Lambda$, such that for every $t_0 \in (0,\infty)$, $x_0 \in \R^d$, and $\rho, R > 0$ with $R \le \rho/2 \wedge t_0^{1/(2s)}$, and every solution $u$ to $\partial_t u +(b,\nabla u) + \mathcal{L}_t^{\rho} u = 0$ in $Q_R(t_0,x_0)$ it holds:
\begin{equation}
\sup_{Q_{R/2}(t_0,x_0)} u \le C \left( \frac{\rho}{R} \right)^{\frac{d}{2}} R^{-\frac{d}{2}} \sup_{t \in I_{R}^{\ominus}(t_0)} \left( \int_{B_{2\rho}(x_0)} u^2(t,x) \d x \right)^{1/2}.
\end{equation}
\end{lemma}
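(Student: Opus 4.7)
The plan is to adapt the De Giorgi scheme used to prove Lemma \ref{lemma:locbd-subsol} to the truncated operator $\mathcal L_t^\rho$, exploiting the geometric fact that under the hypothesis $R\le \rho/2$, every ball $B_\rho(x)$ with $x\in B_R(x_0)$ is contained in $B_{R+\rho}(x_0) \subset B_{2\rho}(x_0)$. As a consequence, all nonlocal interactions that enter the estimates involve only values of $u$ in $B_{2\rho}(x_0)$, so nonlocal tails can be traded for an $L^2$-norm of $u$ on $B_{2\rho}(x_0)$.

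First I would revisit the Caccioppoli estimate \eqref{eq:Cacc1} with $\mathcal L_t$ replaced by $\mathcal L_t^\rho$, whose kernel $K_t^\rho(x,y)=K_t(x,y)\mathbbm{1}_{|x-y|<\rho}$ still satisfies the upper bound in (iii) and the lower bound on $\{|x-y|<\rho\}$. The derivation of \eqref{eq:Cacc-KaWe22} and of the drift bound \eqref{eq:Cacc-drift} go through unchanged, so the only modification is that on the right-hand side of \eqref{eq:Cacc1} the ordinary tail $\tail(w_\pm(t);0,r+\rho_2)$ is replaced by its truncated analogue,
\[
\tail^\rho(u(t);x,R):= R^{2s}\int_{B_\rho(x)\setminus B_R(x_0)} \frac{|u(t,y)|}{|x_0-y|^{d+2s}} \d y.
\]
By the nesting noted above, for every $x\in B_R(x_0)$,
\[
\tail^\rho(u(t);x,R) \le R^{2s}\cdot R^{-d-2s}\int_{B_{2\rho}(x_0)}|u(t,y)|\d y = R^{-d}\int_{B_{2\rho}(x_0)}|u(t,y)|\d y,
\]
uniformly in $x$ and $t$.

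Next I would carry out the De Giorgi iteration exactly as in the proof of \eqref{eq:loc-bd-sup} of Lemma \ref{lemma:locbd-subsol}: the iteration only uses the structural form of the Caccioppoli inequality, with the truncated tail now playing the role of the tail term. This produces
\[
\sup_{Q_{R/2}(t_0,x_0)} u \;\le\; c\left(\dashint_{Q_R(t_0,x_0)} u^2 \d x\d t\right)^{1/2} + c\sup_{t\in I_R^\ominus(t_0)} \tail^\rho(u(t);x_0,R).
\]

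To conclude I would convert each term into the claimed bound. By Cauchy--Schwarz in space,
\[
\left(\dashint_{Q_R(t_0,x_0)} u^2\right)^{1/2} \le R^{-d/2}\sup_{t\in I_R^\ominus(t_0)}\|u(t)\|_{L^2(B_R(x_0))},
\]
while the previous tail estimate combined with a second Cauchy--Schwarz gives
\[
\tail^\rho(u(t);x_0,R) \le R^{-d}|B_{2\rho}|^{1/2}\|u(t)\|_{L^2(B_{2\rho}(x_0))} \le C\rho^{d/2}R^{-d}\|u(t)\|_{L^2(B_{2\rho}(x_0))}.
\]
Since $R\le \rho$, the first right-hand side is dominated by the second, and $\rho^{d/2}R^{-d}=(\rho/R)^{d/2}R^{-d/2}$, which yields the desired inequality.

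The main obstacle is the first step: one has to check carefully that the truncated tail appearing in the Caccioppoli estimate, whose integrand is centered at the running point $x\in B_r(x_0)$ rather than at $x_0$, can indeed be uniformly controlled by an integral on $B_{2\rho}(x_0)$ with weight $R^{-d-2s}$. This is where the hypothesis $R\le \rho/2$ is used decisively through the nesting $B_\rho(x)\subset B_{R+\rho}(x_0)\subset B_{2\rho}(x_0)$; once this geometric observation is in place the remainder is a careful transcription of the De Giorgi argument of \cite{KaWe22b,KaWe23}, with the constants tracked to produce the scaling factor $(\rho/R)^{d/2}$.
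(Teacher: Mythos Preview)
Your proposal is correct and follows the same approach as the paper, which simply observes that the Caccioppoli estimate \eqref{eq:Cacc1} persists for $\mathcal L_t^\rho$ with the tail replaced by its truncated analogue and then refers to \cite[Lemma 2.4]{KaWe23b} for the De Giorgi iteration; you spell out precisely the geometric containment $B_\rho(x)\subset B_{2\rho}(x_0)$ for $x\in B_R(x_0)$ (which is where $R\le\rho/2$ is used) and the ensuing conversion of the truncated tail into $R^{-d}\rho^{d/2}\|u(t)\|_{L^2(B_{2\rho})}$. One cosmetic point: in your definition of $\tail^\rho$ the notation mixes the center $x$ (from the kernel truncation $|x-y|<\rho$) with the center $x_0$ (from the Caccioppoli cutoff), but after the nesting step this distinction disappears and the argument goes through as written.
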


\begin{proof}
Recall that we have the same energy estimate \eqref{eq:Cacc1} as in  case $b \equiv 0$. The energy estimate remains true if $\mathcal{L}_t$ is replaced by $\mathcal{L}^{\rho}_t$ with $\tail(w_+;0,r+\rho_2)$ replaced by $\int_{B_{\rho} \setminus B_{r+\rho_2}} w_+(y) |y|^{-d-2s} \d y$. From here, the proof follows along the exact same arguments as the proof of \cite[Lemma 2.4]{KaWe23b}.
\end{proof}

\subsection{Aronson's weighted estimate}

The following lemma is an adaptation of \cite[Lemma 3.1]{KaWe23b}. It is a nonlocal adaptation of Aronson's weighted $L^2$-estimate, which is the main ingredient in the proof of the upper heat kernel estimates. Due to the presence of the divergence free drift term, we get the additional summand $|\nabla H|$ in the first condition on $H$, compared to \cite{KaWe23b}.

\begin{lemma}[Aronson's weighted estimate]
\label{lemma:Aronson}
Assume that $\rho > 0$, $0 \le \eta < T$. Let $u \in L^{\infty}((\eta,T) \times \R^d)$ be a solution to the $\rho$-truncated  problem $\partial_t u+(b,\nabla u)+ \mathcal{L}_t^{\rho} u =0$ in $(\eta,T) \times \R^d$ with $u(\eta) = u_0$. Then there exists a constant $C > 0$, depending only on $d,s,\Lambda$, such that for every bounded function $H : [\eta,t] \times \R^d \to [0,\infty)$ satisfying
\begin{itemize}
\item $C \left(|\nabla H| + \Gamma^{\rho}_s(H^{1/2},H^{1/2})\right) \le -\partial_t H$ in $(\eta,t) \times \R^d$,
\item $H^{1/2} \in L^2((\eta,t);H^{s}(\R^d))$,
\end{itemize}
the following estimate holds true:
\begin{equation}
\label{eq:Aronson}
\sup_{\tau \in (\eta,t)} \int_{\R^d} H(\tau,x) u^2(\tau,x) \d x \le \int_{\R^d} H(\eta,x) u_0^2(x) \d x.
\end{equation}
\end{lemma}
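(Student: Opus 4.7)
The strategy is the classical Aronson weighted energy identity, adapted to the nonlocal drift–diffusion setting: compute the time derivative of $\tau\mapsto \int_{\R^d} H(\tau,x)u^2(\tau,x)\,\d x$ along the equation, then show that the assumptions on $H$ make this derivative nonpositive, so that Gronwall (in fact direct integration) yields \eqref{eq:Aronson}.

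\textbf{Step 1: Test against $Hu$ and compute the time derivative.} Formally differentiating and using $\partial_\tau u = -(b,\nabla u) - \mathcal{L}_t^\rho u$,
\begin{align*}
\frac{d}{d\tau}\int_{\R^d} H u^2\,\d x
= \int_{\R^d}(\partial_\tau H)\,u^2\,\d x - 2\int_{\R^d} H u\,(b,\nabla u)\,\d x - 2\int_{\R^d} H u\,\mathcal{L}_t^\rho u\,\d x.
\end{align*}
Since $u\in L^\infty$ and $H,|\nabla H|, H^{1/2}$ have the stated integrability, one can first replace $u$ by a smooth approximation (e.g.\ a time-mollification and/or the $\varepsilon\Delta u$ perturbation already mentioned in the paper for the Caccioppoli estimates), carry out the computation, and pass to the limit.

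\textbf{Step 2: Drift term via divergence-freeness.} Rewriting $2Hu\,(b,\nabla u) = H\,(b,\nabla u^2)$ and integrating by parts using $\dive(b)=0$,
\begin{align*}
-2\int_{\R^d} H u\,(b,\nabla u)\,\d x = \int_{\R^d} u^2\,(b,\nabla H)\,\d x \;\le\; \Lambda\int_{\R^d} |\nabla H|\,u^2\,\d x.
\end{align*}

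\textbf{Step 3: Symmetric decomposition of the nonlocal term.} Using the identity $2u(x)(u(x)-u(y)) = (u(x)-u(y))^2 + (u^2(x)-u^2(y))$ together with the symmetry of $K_t$, I would symmetrize
\begin{align*}
2\int_{\R^d} H u\,\mathcal{L}_t^\rho u\,\d x
&= \iint H(x)\,(u(x)-u(y))^2\,\mathbf{1}_{|x-y|<\rho}K_t\,\d y\,\d x \\
&\quad + \tfrac{1}{2}\iint (H(x)-H(y))(u^2(x)-u^2(y))\,\mathbf{1}_{|x-y|<\rho}K_t\,\d y\,\d x.
\end{align*}
The first integral, by symmetry, equals $\tfrac12\iint(H(x)+H(y))(u(x)-u(y))^2\,\mathbf{1}_{|x-y|<\rho}K_t\,\d y\,\d x \ge 0$, and this is the good dissipative term.

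\textbf{Step 4: Cross term via the square-root trick.} The key algebraic identity is
\begin{align*}
H(x)-H(y) = \bigl(H^{1/2}(x)-H^{1/2}(y)\bigr)\bigl(H^{1/2}(x)+H^{1/2}(y)\bigr),
\qquad u^2(x)-u^2(y) = \bigl(u(x)-u(y)\bigr)\bigl(u(x)+u(y)\bigr),
\end{align*}
and the Cauchy–Schwarz / Young bound with a small parameter $\varepsilon>0$:
\begin{align*}
|(H(x)-H(y))(u^2(x)-u^2(y))|
\le \varepsilon\,(H(x)+H(y))(u(x)-u(y))^2 + \tfrac{1}{\varepsilon}\,(H^{1/2}(x)-H^{1/2}(y))^2(u^2(x)+u^2(y)).
\end{align*}
Choosing $\varepsilon$ small enough (say $\varepsilon = 1/2$), the first piece is absorbed into the good term from Step 3, while the second piece, after symmetrizing in $x \leftrightarrow y$ and using $K_t \le \Lambda|x-y|^{-d-2s}$ on $\{|x-y|<\rho\}$, is bounded by
\begin{align*}
C\Lambda \int_{\R^d} u^2(x)\,\Gamma_s^\rho(H^{1/2},H^{1/2})(x)\,\d x.
\end{align*}

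\textbf{Step 5: Combine and use the structural assumption on $H$.} Collecting Steps 2–4 yields
\begin{align*}
\frac{d}{d\tau}\int_{\R^d} H u^2\,\d x \;\le\; \int_{\R^d}\Bigl(\partial_\tau H + C'\,|\nabla H| + C'\,\Gamma_s^\rho(H^{1/2},H^{1/2})\Bigr)u^2\,\d x
\end{align*}
for a constant $C'$ depending only on $d,s,\Lambda$. The hypothesis $C\bigl(|\nabla H|+\Gamma_s^\rho(H^{1/2},H^{1/2})\bigr)\le -\partial_\tau H$, applied with the constant $C$ of the lemma chosen as $C = C'$, forces the integrand to be $\le 0$. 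Thus $\tau\mapsto \int H(\tau)u^2(\tau)\,\d x$ is nonincreasing, which is exactly \eqref{eq:Aronson}.

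\textbf{Main obstacle.} The principal technical point is the cross term in Step 4: one has to extract precisely the quantity $\Gamma_s^\rho(H^{1/2},H^{1/2})$ (and not, say, $\Gamma_s^\rho(H,H)$) from a difference of the form $H(x)-H(y)$, which is what forces the square-root factorization and the Young splitting with absorption. A secondary issue is rigorously justifying the time differentiation under the integral and the use of $Hu$ as a test function, but thanks to $u\in L^\infty$, the truncation to $B_\rho(x)$, and the integrability assumption $H^{1/2}\in L^2((\eta,t);H^s(\R^d))$, a standard mollification/approximation argument (as already alluded to earlier in the paper) handles this.
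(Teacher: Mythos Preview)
Your proof is correct and follows essentially the same route as the paper: test with (a version of) $Hu$, use $\dive(b)=0$ to turn the drift term into $\int u^2(b,\nabla H)$, and split the nonlocal bilinear form via the square-root factorization to extract exactly $\Gamma_s^{\rho}(H^{1/2},H^{1/2})$ plus an absorbable piece. The only substantive difference is that the paper does not test directly with $Hu$ but with $\gamma_R^2 H u$, where $\gamma_R\in C_c^\infty(B_R)$ is a spatial cutoff; this produces additional error terms of the form
\[
\|u\|_\infty^2 \int_{\eta}^{t}\!\!\int_{\R^d\setminus B_{(R-1)/2}} \Gamma_s^\rho(H^{1/2},H^{1/2})\,\d x\,\d\tau
\quad\text{and}\quad
\|u\|_\infty^2 \int_{\eta}^{t}\!\!\int_{\R^d} \big(\Gamma_s^\rho(\gamma_R,\gamma_R)+|\nabla\gamma_R|\big)H\,\d x\,\d\tau,
\]
which vanish as $R\to\infty$ precisely because of the hypotheses $H^{1/2}\in L^2((\eta,t);H^s(\R^d))$ and $H\in L^1$; your version hides this step behind the generic ``standard approximation'' remark.
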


\begin{proof}
The proof is very similar to the one of \cite[Lemma 3.1]{KaWe23b}. As in \cite{KaWe23b}, let us take $\gamma_R \in C^{\infty}_c(\R^d)$ is such that $\gamma_R \equiv 1$ in $B_{R-1}(0)$, $\gamma_R \equiv 0$ in $\R^d \setminus B_{R}(0)$, $0 \le \gamma_R \le 1$, $\vert \nabla\gamma_R\vert \le 2$ for $R > 0$.\\
Note that
\begin{align*}
\int_{\eta}^t & \int_{\R^d} (b,\nabla u(\tau,x)) \gamma_R u H(\tau,x) \d x \d \tau = \frac{1}{2}\int_{\eta}^t \int_{\R^d} (b,\nabla(u^2)) \gamma_R H(\tau,x) \d x \d \tau\\
&= \frac{1}{2}\int_{\eta}^t \int_{\R^d} (b,\nabla \gamma_R) u^2 H(\tau,x) \d x \d \tau + \frac{1}{2}\int_{\eta}^t \int_{\R^d} (b,\nabla H) u^2 \gamma_R \d x \d \tau.
\end{align*}
We test the weak formulation with $\phi = \gamma_R^2 H u$, and obtain from the previous computation, as well as the same arguments as in \cite{KaWe23b}:
\begin{align*}
&\sup_{\tau \in (\eta,t)}\int_{\R^d} u^2(\tau,x) \gamma_R^2(x) H(\tau,x) \d x \le 
\int_{\R^d} u^2_0(x) \gamma_R^2(x) H(\eta,x) \d x\\
&\qquad\quad + c_2 \int_{\eta}^t \int_{\R^d} \nabla(u^2)\gamma_R H  \d x \d \tau\\ 
&\qquad\quad + \int_{\eta}^t \int_{\R^d} u^2 \gamma_R^2 \left(2c_2\Gamma^{\rho}_s(H^{1/2},H^{1/2}) + 2c_2|\nabla H| + \partial_t H \right) \d x \d \tau\\
&\qquad\quad + 2c_2 \int_{\eta}^{t} \int_{\R^d}\int_{B_{\rho}(x)} u^2(\tau,x)\gamma_R^2(z)(H^{1/2}(\tau,x) - H^{1/2}(\tau,z))^2 \vert x-y \vert^{-d-2s} \d z \d x \d \tau\\
&\qquad\quad + 2c_2 \int_{\eta}^{t}\int_{\R^d} u^2  H \Gamma^{\rho}_s(\gamma_R,\gamma_R) \d x \d \tau\\
&\qquad\quad + 2c_2 \int_{\eta}^{t}\int_{\R^d}\int_{B_{\rho}(x)} u^2(\tau,z) H(\tau,x) (\gamma_R(x)-\gamma_R(z))^2 \vert x-y \vert^{-d-2s} \d z \d x \d \tau\\
&\qquad\quad + 2 c_2 \int_{\eta}^{t}\int_{\R^d} u^2 H |\nabla \gamma_R|  \d x \d \tau\\
&\qquad\le \int_{\R^d} u^2_0(x) \gamma_R^2(x) H(\eta,x) \d x\\
&\qquad\quad + \int_{\eta}^{t} \int_{\R^d} u^2 \gamma_R^2 \left(4c_2\Gamma^{\rho}_s(H^{1/2},H^{1/2}) + 4c_2 |\nabla H| + \partial_t H \right) \d x \d \tau\\
&\qquad\quad + 2c_2 \Vert u\Vert_{\infty}^2\int_{\eta}^{t} \int_{\R^d \setminus B_{\frac{R-1}{2}(0)}} \Gamma^{\rho}_s(H^{1/2},H^{1/2}) \d x \d \tau\\
&\qquad\quad + 4c_2 \Vert u\Vert_{\infty}^2 \int_{\eta}^{t}\int_{\R^d} \left(\Gamma^{\rho}_s(\gamma_R,\gamma_R) + |\nabla \gamma_R | \right) H  \d x \d \tau.
\end{align*}
By the definition of $\gamma_R$, and since $H \in L^1((\eta,t) \times \R^d)$ by assumption, we have that the last term in the previous estimate converges to zero, as $R \to \infty$. The second term goes to zero by the same arguments as in \cite[Lemma 3.1]{KaWe23b}. This implies the desired result.
\end{proof}

\subsection{Gluing lemma}

The goal of this section is to prove the following relation between the truncated heat kernel $p^{\rho}$ and $p$:

\begin{lemma}[Gluing lemma]
\label{lemma:gluing}
Let $\rho > 0$ and $0 \le \eta \le t$. Then,
\begin{align*}
p(\eta,x;t,y) \le p^{\rho}(\eta,x;t,y) + c (t-\eta) \rho^{-d-2s} \quad \forall x,y \in \R^d.
\end{align*}
Moreover, it holds
\begin{align*}
p^{\rho}(\eta,x;t,y) \le e^{C (t-\eta) \rho^{-2s}}p(\eta,x;t,y) \quad \forall x,y \in \R^d.
\end{align*}
The constants $c,C > 0$ depend only on $d,s,\Lambda$.
\end{lemma}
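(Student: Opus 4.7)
The plan is to relate $p$ and $p^\rho$ via Duhamel perturbation identities, using the splitting
\begin{align*}
\mathcal{J}_\tau^\rho := \mathcal{L}_\tau - \mathcal{L}_\tau^\rho, \qquad \mathcal{J}_\tau^\rho u(z) = J_\tau^\rho(z) u(z) - I_\tau^\rho u(z),
\end{align*}
where $J_\tau^\rho(z) := \int_{\R^d \setminus B_\rho(z)} K_\tau(z,w) \d w$ and $I_\tau^\rho u(z) := \int_{\R^d \setminus B_\rho(z)} u(w) K_\tau(z,w) \d w$. Assumption (iii) yields the two scale-explicit bounds $J_\tau^\rho(z) \le C \rho^{-2s}$ and $K_\tau(z,w) \le \Lambda \rho^{-d-2s}$ on $\{|z-w| > \rho\}$; these are the two inputs driving the two inequalities respectively.

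For the first inequality, I would observe that since $\partial_\tau p^\rho + (b,\nabla) p^\rho + \mathcal{L}_\tau^\rho p^\rho = 0$, the truncated kernel is a solution of the full equation with source $\mathcal{J}_\tau^\rho p^\rho$. Applying Duhamel against the full semigroup $P_{\tau,t}$, and using $p^\rho(\eta,\cdot;\eta,y) = \delta_y = p(\eta,\cdot;\eta,y)$, one obtains
\begin{align*}
p^\rho(\eta,x;t,y) = p(\eta,x;t,y) + \int_\eta^t \int_{\R^d} p(\tau,x;t,z)\, \mathcal{J}_\tau^\rho p^\rho(\eta,\cdot;\tau,y)(z) \d z \d \tau.
\end{align*}
The pointwise lower bound
\begin{align*}
\mathcal{J}_\tau^\rho p^\rho(\eta,\cdot;\tau,y)(z) \ge -I_\tau^\rho p^\rho(\eta,\cdot;\tau,y)(z) \ge -\Lambda \rho^{-d-2s}\!\!\int_{\R^d}\! p^\rho(\eta,w;\tau,y) \d w = -\Lambda \rho^{-d-2s},
\end{align*}
together with $\int p(\tau,x;t,z) \d z = 1$ (both consequences of \eqref{eq:int} applied to the respective kernel), immediately yields $p \le p^\rho + \Lambda (t-\eta) \rho^{-d-2s}$.

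For the second inequality, I would introduce the heat kernel $\tilde p^\rho$ associated to the \emph{killed} truncated operator $(b,\nabla) + \mathcal{L}_\tau^\rho + J_\tau^\rho$ and chain two bounds. First, using $\mathcal{L}_\tau = \mathcal{L}_\tau^\rho + J_\tau^\rho - I_\tau^\rho$, a direct computation gives
\begin{align*}
\partial_\tau p + (b,\nabla)p + \mathcal{L}_\tau^\rho p + J_\tau^\rho p = I_\tau^\rho p \ge 0,
\end{align*}
so that $p$ is a supersolution of the equation defining $\tilde p^\rho$ with the same initial value $\delta_y$, and parabolic comparison gives $p \ge \tilde p^\rho$. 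Second, setting $M := \|J^\rho\|_{L^\infty} \le C \rho^{-2s}$ and $\psi(\tau,\cdot) := e^{M(\tau-\eta)} \tilde p^\rho(\eta,\cdot;\tau,y)$, one computes
\begin{align*}
\partial_\tau \psi + (b,\nabla)\psi + \mathcal{L}_\tau^\rho \psi = (M - J_\tau^\rho)\psi \ge 0, \qquad \psi(\eta,\cdot) = \delta_y = p^\rho(\eta,\cdot),
\end{align*}
so comparison against $p^\rho$ yields $\psi \ge p^\rho$, i.e. $\tilde p^\rho \ge e^{-M(t-\eta)} p^\rho$. Chaining both estimates gives $p \ge e^{-C(t-\eta)\rho^{-2s}} p^\rho$, which rearranges to the second inequality.

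The main obstacle I expect is the rigorous justification of the Duhamel identity and the parabolic comparison principle when the initial datum is the singular measure $\delta_y$, as well as ensuring that $\tilde p^\rho$ is available as an object with the same basic structural properties (nonnegativity, $L^\infty$ control) as $p$ and $p^\rho$. I would handle both issues by the standard approximation device: mollify $\delta_y$ to a smooth nonnegative $\phi_\varepsilon$, carry out all three arguments at the level of the resulting smooth bounded solutions (where Duhamel is a direct integration and weak comparison follows by testing the equation for the difference against $(\cdot)_-$, exploiting $\mathrm{div}(b)=0$ and the positivity of $I_\tau^\rho$), and pass $\varepsilon \to 0$ using continuity of the three semigroups together with the pointwise upper bounds from Lemma \ref{lemma:aux-heatkernel} and Lemma \ref{lemma:local-boundedness-truncated}.
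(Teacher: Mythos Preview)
Your argument is correct and is a genuinely different route from the paper's. The paper works entirely through the weak parabolic maximum principle (Lemma~\ref{lemma:pmp}): for the first inequality it exhausts $\R^d$ by bounded domains $\Omega_n$, builds the explicit supersolution $P^{\Omega_n,\rho}_{\eta,t}f + C(t-\eta)\rho^{-d-2s}\phi_n$ (with $\phi_n$ a cutoff of $\Omega_n$), verifies by an energy computation that $P^{\Omega_n}_{\eta,t}f$ minus this is a subsolution, and then lets $n\to\infty$; for the second inequality it applies the same maximum principle to $P^{\rho,\Omega_n}_{\eta,t}f - e^{(t-\eta)\rho^{-2s}}P^{\Omega_n}_{\eta,t}f$ in one step, without introducing your intermediate killed kernel $\tilde p^\rho$.

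Your approach trades the domain exhaustion for a Duhamel identity and the auxiliary Feynman--Kac type kernel $\tilde p^\rho$; this is the more semigroup-theoretic path and makes the algebraic origin of the two scales $\rho^{-d-2s}$ (from the pointwise kernel bound off the diagonal) and $\rho^{-2s}$ (from the mass of the tail $J_\tau^\rho$) completely transparent. The paper's approach has the advantage of staying inside the variational framework already set up (Lemma~\ref{lemma:pmp} is proved directly for weak subsolutions), so it does not need to justify Duhamel for time-dependent non-symmetric generators or construct $\tilde p^\rho$. Your mollification of $\delta_y$ plays the same role that the paper's reduction to $L^1$ data $f$ and bounded domains plays: both are devices to place the comparison arguments in a setting where testing against $(\cdot)_-$ is rigorous.
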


A central ingredient in its proof is the following parabolic maximum principle, which remains true in the presence of a divergence free drift $b$. For $\Omega \subset \R^d$ open, we define $H^s_{\Omega}(\R^d) = \{u \in H^s(\R^d) : u \equiv 0 ~~ \text{ in } \R^d \setminus \Omega\}$.

\begin{lemma}
\label{lemma:pmp}
Let $\Omega \subset \R^d$ be open. Assume that $u$ solves
\begin{equation}
\begin{cases}
&\partial_t u + (b,\nabla u) + \mathcal{L}_t u \le 0,~  \text{in } (\eta,\infty) \times \Omega,\\
&u_+(t) \in H^{s}_{\Omega}(\R^d) ~~ \forall t \in (\eta,\infty),\\
&u_+(t) \to 0 ~ \text{in } L^2(\Omega), ~ \text{as } t \searrow \eta.
\end{cases}
\end{equation}
Then $u \le 0$ a.e. in $(\eta,\infty) \times \Omega$. The same result holds for subsolutions to $\partial_t u + (b,\nabla u) + \mathcal{L}_t^{\rho} u \le 0$ in $(\eta,\infty) \times \Omega$.
\end{lemma}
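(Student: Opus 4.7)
The plan is a standard testing argument in the spirit of the classical De Giorgi / Stampacchia method, adapted to the nonlocal setting with a divergence-free drift. The key observation is that the assumption $u_+(t)\in H^s_\Omega(\R^d)$ makes $u_+$ an admissible test function in the weak formulation on $(\eta,\infty)\times\Omega$. Once this is clear, everything reduces to identifying the correct sign in each of the three terms arising from the equation.

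First, I would test the differential inequality $\partial_t u + (b,\nabla u)+\mathcal L_t u \le 0$ against $\varphi = u_+(t,\cdot)$ on a time interval $(\eta,\tau)$. The time-derivative term formally yields $\frac12\partial_t \int_\Omega u_+^2\,dx$, since $(\partial_t u)u_+ = \frac12 \partial_t(u_+^2)$ a.e.; this identity can be justified rigorously via the usual Steklov averages (or a standard time-mollification) combined with the fact that $u_+(t)\to 0$ in $L^2(\Omega)$ as $t\searrow\eta$. For the drift term I would use that $u\,(b,\nabla u_+) = u_+\,(b,\nabla u_+) = \tfrac12(b,\nabla(u_+^2))$ (note that on $\{u\le 0\}$ both sides vanish), so after integration by parts and using $\dive(b(t))=0$ together with $u_+(t)\in H^s_\Omega(\R^d)$ (which gives the required vanishing at the boundary of $\Omega$ in a trace sense), this contribution is zero.

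For the nonlocal energy I would invoke the pointwise inequality
\[
\bigl(u(x)-u(y)\bigr)\bigl(u_+(x)-u_+(y)\bigr) \ge \bigl(u_+(x)-u_+(y)\bigr)^2 \ge 0,
\]
which is elementary (check the three cases according to the signs of $u(x)$ and $u(y)$). Combined with the nonnegativity of $K_t$ from condition (iii), this gives $\mathcal E^{K_t}(u,u_+)\ge 0$. Putting these three ingredients together, one arrives at
\[
\frac{1}{2}\frac{d}{dt}\int_\Omega u_+^2(t,x)\,dx \le 0 \qquad \text{for a.e. } t\in(\eta,\infty),
\]
which after integrating from $\eta$ to any $\tau>\eta$ and using the initial condition $u_+(t)\to 0$ in $L^2(\Omega)$ forces $u_+(\tau,\cdot)\equiv 0$, i.e.\ $u\le 0$ a.e.\ in $(\eta,\infty)\times\Omega$.

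The case of the truncated operator $\mathcal L_t^\rho$ is identical: the kernel $K_t\mathbbm{1}_{B_\rho(x)}(y)$ is still symmetric and nonnegative, so the same pointwise inequality yields $\mathcal E^{K_t,\rho}(u,u_+)\ge 0$, and nothing else in the argument changes. I expect the main technical obstacle to lie not in the algebra above but in the rigorous justification of using $u_+$ as a test function and of the identity $(\partial_t u)u_+ = \tfrac12\partial_t(u_+^2)$ under the low regularity assumed. This is resolved by the standard Steklov-average approach (or equivalently by the artificial-viscosity regularization already mentioned in the remark after Lemma \ref{lemma:Cacc}), which produces sufficiently smooth approximations for which the chain rule applies, and then one passes to the limit.
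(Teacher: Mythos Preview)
Your proposal is correct and follows essentially the same approach as the paper: test the weak formulation with $u_+$, observe that the drift contribution vanishes because $b$ is divergence free and $u\,(b,\nabla u_+)=\tfrac12(b,\nabla(u_+^2))$, and that the nonlocal energy $\mathcal E^{K_t}(u,u_+)$ is nonnegative by the pointwise sign inequality, then conclude monotonicity of $\int_\Omega u_+^2$ and pass $t_1\searrow\eta$. Your write-up is in fact slightly more detailed than the paper's (you make the pointwise inequality and the Steklov-average justification explicit), but the argument is the same.
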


\begin{proof}
First, we observe that
\begin{align*}
\int_{\R^d} (b,\nabla u) u_+ \d x = \frac{1}{2} \int_{\R^d} (b , \nabla(u^2)) \d x = 0
\end{align*}
since $b$ is divergence free. Moreover, note that $\cE^{K_t}(u(t),u_+(t))\ge 0$. Thus, testing the weak formulation for $u$ with $\phi = u_+$, we obtain for any $ \eta < t_1 < t_2$:
\begin{align*}
\int_{\Omega} u_+^2(t_2,x) \d x - \int_{\Omega} u_+(t_1,x) \d x \le 0.
\end{align*}
The desired result follows by taking the limit $t_1 \searrow \eta$.
\end{proof}

We are now in the position to give the  

\begin{proof}[Proof of Lemma \ref{lemma:gluing}]
Let $\Omega_n \subset \R^d$, $n \in \mathbb{N}$, be an increasing sequence of domains, such that $\Omega_n \nearrow \R^d$. Let $f \in L^1(\R^d)$ with $\Vert f \Vert_{L^1(\R^d)} \le 1$. We define $(t,x) \mapsto P^{\Omega_n}_{\eta,t}f(x)$ to be the solution to $\partial_t u + (b , \nabla u) + \mathcal{L}_t u = 0$ with $u(\eta) = f$, and $(t,x) \mapsto P^{\Omega_n,\rho}_{\eta,t}f(x)$ to solve the corresponding problem with $\mathcal{L}_t$ replaced by $\mathcal{L}_t^{\rho}$.\\
The proof of the result goes as in \cite[Lemma 2.2]{KaWe23b} and is based on an application of the parabolic maximum principle Lemma \ref{lemma:pmp} to the function $(t,x) \mapsto P_{\eta,t}^{\Omega_n} f(x) - u(t,x)$, where
\begin{equation*}
u(t,x) =  P^{\Omega_n,\rho}_{\eta,t}f(x) + C (t-\eta) \rho^{-d-2s} \phi_n(x),
\end{equation*}
and $\phi_n \in C_c^{\infty}(\R^d)$ with $0 \le \phi_n \le 1$, $\phi_n \equiv 1$ in $\Omega_n$.\\
In order to apply Lemma \ref{lemma:pmp}, it remains to prove that $u$ solves $\partial_t u + (b,\nabla u) + \mathcal{L}_t u \le 0$ in $(\eta,\infty) \times \Omega_n$. To prove it, we take an arbitrary test function $\psi \in H^s_{\Omega_n}(\R^d)$. First, note 
\begin{align*}
\cE^{K_t}(P_{\eta,t}^{\rho,\Omega_n}f,\psi) - \cE^{K_t^{\rho}}(P_{\eta,t}^{\rho,\Omega_n}f,\psi)  &\ge - c \rho^{-d-2s} \Vert \psi \Vert_{L^1(\R^d)} \Vert P_{\eta,t}^{\rho,\Omega_n}f  \Vert_{L^1(\R^d)} \\
&\ge - c \rho^{-d-2s} \Vert \psi \Vert_{L^1(\R^d)},
\end{align*}
where $\cE^{K_t}$ was defined in \eqref{eq:energy-def}, we wrote $K_t^{\rho} (x,y) = K_t(x,y) \mathbbm{1}_{|x-y| \le \rho}(x,y)$, and we used:
\begin{align*}
\Vert P_{\eta,t}^{\rho,\Omega_n}f \Vert_{L^1(\R^d)} \le \Vert P_{\eta,t}^{\rho}f \Vert_{L^1(\R^d)} \le \Vert \widehat{P}_{\eta,t}^{\rho} 1 \Vert_{L^{\infty}(\R^d)} \Vert f \Vert_{L^1(\R^d)} \le  1.
\end{align*}
Note that in the first estimate, we applied Lemma \ref{lemma:pmp}, and in the last estimate, we used \eqref{eq:int}.
Moreover, we have
\begin{align*}
\int_{\R^d} \phi_n \psi \d x \ge \Vert \psi \Vert_{L^1(\R^d)}, ~~ \int_{\R^d}(b,\nabla \phi_n)\psi \d x = 0, ~~ \cE^{K_t}(\phi_n,\psi) \ge 0
\end{align*}
by the definitions of $\phi_n,\psi$.
Therefore,
\begin{align*}
\int_{\R^d}&\partial_t u(t,x) \psi(x) \d x + \int_{\R^d} (b,\nabla u) \psi \d x + \cE^{K_t}(u,\psi) \\
&= \left[\int_{\R^d} \partial_t P_{\eta,t}^{\rho,\Omega_n}f(x) \psi(x) \d x + \int_{\R^d} (b,\nabla P_{\eta,t}^{\rho,\Omega_n}f) \psi \d x  + \cE^{K_t}(P_{\eta,t}^{\rho,\Omega_n}f,\psi) \right]\\
&\quad+  C \rho^{-d-2s} \left[ \int_{\R^d}\phi_n \psi \d x + (t-\eta) \int_{\R^d} (b,\nabla \phi_n) \psi \d x+ (t-\eta)\cE^{K_t}(\phi_n,\psi) \right]\\
&\ge - c \rho^{-d-2s} \Vert \psi \Vert_{L^1(\R^d)} + C \rho^{-d-2s} \Vert \psi \Vert_{L^1(\R^d)}\\
&\ge \rho^{-d-2s} \Vert \psi \Vert_{L^1(\R^d)} \left(C - c\right).
\end{align*}
Thus, we can choose $C > c$ large enough such that, indeed $(t,x) \mapsto P_{\eta,t}^{\Omega_n} f(x) - u(t,x)$ is a subsolution.\\
Therefore, by the maximum principle Lemma \ref{lemma:pmp}, we obtain:
\begin{align}
\label{eq:MDapprox}
P^{\Omega_n}_{\eta,t} f(x) \le P^{\Omega_n,\rho}_{\eta,t} f(x) + C(t-\eta) \rho^{-d-2s} \Vert f \Vert_{L^1(\R^d)}.
\end{align}
Taking $n \to \infty$ in \eqref{eq:MDapprox} implies by the same arguments as in \cite[Lemma 5.3]{KaWe23b}
\begin{equation*}
P_{\eta,t} f(x) \le P^{\rho}_{\eta,t} f(x) + C(t-\eta) \rho^{-d-2s} \Vert f \Vert_{L^1(\R^d)}.
\end{equation*}
This yields the desired result by recalling the relation between $P_{\eta},t f$ and $p$ in Lemma \ref{lemma:aux-heatkernel}, and choosing $f = \mathbbm{1}_A$ for any measurable set $A \subset \R^d$.

The proof of the second claim follows by an application of the parabolic maximum principle to $u(t,x) = P_{\eta,t}^{\rho,\Omega}f - P_t^{\Omega_n} f(x) e^{(t-\eta) \rho^{-2s}}$.
\end{proof}

\subsection{Upper estimates for the truncated heat kernel}

The goal of this section is to deduce upper estimates for the heat kernel $p^{\rho}$ corresponding to the truncated operator $\mathcal{L}_t^{\rho}$:

\begin{proposition}[Truncated heat kernel estimate]
\label{thm:offdiagtrunc}
Let $T > 0$. There exist $c > 0$, $\nu > 1$, depending only on $d,s,\Lambda,T$, such that for every $\rho > 0$, $0 \le \eta \le t \le T$, and $x,y \in \R^d$ with  $t-\eta \le \frac{1}{4\nu}\rho^{2s}$:
\begin{equation}
\label{eq:offdiagtrunc}
p^{\rho}(\eta,x;t,y) \le c (t-\eta)^{-\frac{d}{2s}} 2^{\frac{\vert x-y \vert}{12\rho}} \left( \frac{\rho^{2s}}{\nu (t-\eta)} \right)^{-\frac{\vert x-y \vert}{12\rho} + \frac{1}{2} + \frac{d}{4s}}.
\end{equation}
\end{proposition}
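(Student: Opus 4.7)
The plan is to adapt Davies' method to the truncated nonlocal semigroup with divergence-free drift, following the scheme in \cite{KaWe23b}. Three main steps are envisaged: (i) derive a weighted $L^2$ estimate by inserting a suitable exponential weight into Aronson's Lemma \ref{lemma:Aronson}; (ii) upgrade to a pointwise off-diagonal bound using the semigroup identity and Lemma \ref{lemma:local-boundedness-truncated} at the midpoint time; (iii) optimize over a free slope parameter to obtain the precise Davies-type decay of the statement.

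For Step (i), I would fix a bounded $1$-Lipschitz function $\phi$ on $\R^d$ and, for parameters $\lambda \geq 1$ and $\gamma = \gamma(\lambda)$ to be chosen, introduce
\begin{equation*}
H_\lambda(\tau, z) := \exp\!\left(\tfrac{2\lambda}{\rho}\phi(z) - 2\gamma(\lambda)\rho^{-2s}(\tau - \eta)\right).
\end{equation*}
Using $|e^a - e^b| \leq |a-b|\, e^{\max(a,b)}$ on differences bounded by $2\lambda$ (which requires the truncation $|x-y| \leq \rho$), a direct computation gives $|\nabla H_\lambda| \leq (2\lambda/\rho) H_\lambda$ and $\Gamma^\rho_s(H_\lambda^{1/2}, H_\lambda^{1/2}) \leq C_0 \lambda^2 e^{C_0\lambda} \rho^{-2s} H_\lambda$, with $C_0 = C_0(d,s)$. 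Since $s \geq 1/2$ and $\lambda \geq 1$, the carré-du-champ term dominates, so choosing $\gamma(\lambda) = C_1 \lambda^2 e^{C_0\lambda}$ with $C_1$ sufficiently large makes $H_\lambda$ admissible in Lemma \ref{lemma:Aronson}. Applied to $u(\tau,\cdot) := P^\rho_{\eta,\tau} f$ for $f \in L^2 \cap L^\infty$ nonnegative, the lemma yields the weighted bound $\sup_{\tau \in (\eta,t)} \int H_\lambda(\tau,\cdot) u^2 \leq \int H_\lambda(\eta,\cdot) f^2$.

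For Steps (ii)--(iii), I would use the semigroup identity at the midpoint $\tau_* = (\eta+t)/2$,
\begin{equation*}
p^\rho(\eta, x; t, y) = \int p^\rho(\eta, x; \tau_*, z)\, p^\rho(\tau_*, z; t, y)\, dz,
\end{equation*}
combined with Lemma \ref{lemma:local-boundedness-truncated} applied on cylinders $Q_{R/2}(\tau_*, z)$ with $R$ comparable to $(t-\eta)^{1/(2s)}$; the constraint $R \leq \rho/2$ required by the lemma is guaranteed by the standing hypothesis $t - \eta \leq \rho^{2s}/(4\nu)$. Specializing $\phi$ to a bounded $1$-Lipschitz approximation of $z \mapsto -|z-y|$, pairing the weighted $L^2$ bound from Step (i) with Cauchy--Schwarz in each factor, and using the on-diagonal bound \eqref{eq:on-diag} for the truncated kernel, one arrives at an estimate of the form
\begin{equation*}
p^\rho(\eta, x; t, y) \leq C(t-\eta)^{-d/(2s)}\,(\rho/R)^{d/2}\exp\!\bigl(\gamma(\lambda)\rho^{-2s}(t-\eta) - c_2(\lambda/\rho)|x-y|\bigr).
\end{equation*}
With $\tau := \nu(t-\eta)/\rho^{2s} \in (0, 1/4]$ and $r := |x-y|/\rho$, the exponent reads $C_1 \nu^{-1}\lambda^2 e^{C_0\lambda}\tau - c_2 \lambda r$, which is minimized by $\lambda_* = C_0^{-1}\log(c'/\tau)$ (chosen so that $e^{C_0\lambda_*}\tau$ equals a fixed constant). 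This produces the factor $(\rho^{2s}/(\nu(t-\eta)))^{-|x-y|/(12\rho)}$, the $1/12$ emerging as $c_2/C_0$ after absorbing constants into $\nu$; the polynomial correction $(\rho^{2s}/(\nu(t-\eta)))^{1/2 + d/(4s)}$ then collects the local-boundedness contribution $(\rho/R)^{d/2}$ together with the residual power of $\tau$ left from the optimization.

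The main obstacle will be the bookkeeping of constants across these steps: the coefficient $1/12$ and the exponent $1/2 + d/(4s)$ in the statement are rigid, and matching them requires carefully tracking $C_0$ (from the carré-du-champ bound on $H_\lambda$), $c_2$ (from the $L^2$ off-diagonal step), $d/2$ (from Lemma \ref{lemma:local-boundedness-truncated}), and $d/(2s)$ (the on-diagonal exponent), while absorbing all multiplicative remainders into the single constant $\nu$. The hypothesis $t - \eta \leq \rho^{2s}/(4\nu)$ serves the dual purpose of ensuring $\lambda_* \geq 1$, so that the carré-du-champ bound in Step (i) is tight, and of guaranteeing the truncation constraint $R \leq \rho/2$ in Step (ii).
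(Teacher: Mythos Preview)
Your strategy is a genuine Davies-method attack, which the paper explicitly mentions as an alternative route; but it is \emph{not} the route the paper takes, and the way you predict the constants will fall out is where the proposal goes wrong.

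The paper does not use exponential weights and does not optimize. Instead it constructs the power-law weight
\[
H(\tau,x)=\Bigl(\tfrac{\rho^{2s}}{\nu[2(t-\eta)-(\tau-\eta)]}\Bigr)^{-1}\wedge\Bigl(\tfrac{\rho^{2s}}{\nu[2(t-\eta)-(\tau-\eta)]}\Bigr)^{-|x-y|/(3\rho)},
\]
checks directly (Lemma~\ref{lemma:H}) that this $H$ satisfies the differential inequality in Aronson's Lemma~\ref{lemma:Aronson}, and then uses it in Lemma~\ref{lemma:auxtrunc} together with Lemma~\ref{lemma:local-boundedness-truncated} to obtain an $L^2\to L^\infty$ off-diagonal bound for solutions with initial data vanishing on $B_\sigma(y)$. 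The final pointwise estimate follows by the semigroup identity at the midpoint time and Cauchy--Schwarz, applied to the $L^2$ bounds \eqref{eq:weakl2esttrunc1-nonsym}--\eqref{eq:l2esttrunc2-nonsym}. The exponent $-\tfrac{|x-y|}{12\rho}$ arises from successive halvings: $\tfrac13$ in the weight, a square root in Lemma~\ref{lemma:auxtrunc}, and $\sigma=\tfrac12|x-y|$ in the semigroup step; the term $\tfrac12+\tfrac{d}{4s}$ comes from $\inf_{B_{2\rho}(y)}H\gtrsim(\rho^{2s}/(\nu(t-\eta)))^{-1}$ (giving $\tfrac12$ after the square root) and the factor $(\rho/R)^{d/2}$ from local boundedness. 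No free parameter is tuned.

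By contrast, your Davies optimization produces a coefficient $c_2/C_0$ in front of $\tfrac{|x-y|}{\rho}\log(\rho^{2s}/(\nu(t-\eta)))$, where $C_0$ is fixed by the carr\'e-du-champ bound on $e^{\lambda\phi/\rho}$ and $c_2$ by the Lipschitz constant of $\phi$. This ratio is determined by the analysis, not adjustable, and there is no reason it should equal $\tfrac{1}{12}$; changing $\nu$ only shifts the base of the power, not the exponent's coefficient. Likewise, the ``residual power of $\tau$'' from your optimization carries a $\lambda_*^2$ factor whose contribution is a polylogarithm in $\tau$, not a clean $\tfrac12$. So your method would yield an estimate of the correct shape with \emph{some} positive exponent coefficient (which would suffice for the downstream Theorem~\ref{thm:uhkeintro}, where $\rho$ is chosen proportional to $|x-y|$), but it would not reproduce the proposition with the specific numbers $\tfrac{1}{12}$ and $\tfrac12+\tfrac{d}{4s}$ as stated.
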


Given $y \in \R^d$, $\rho > 0$, $0 \le \eta \le t \le T$, with $t-\eta \le \frac{1}{4\nu}\rho^{2s}$, where $\nu > 1$, we set
\begin{equation}
\label{eq:H}
\begin{split}
H(\tau,x) &:= \left(\frac{\rho^{2s}}{\nu[2(t-\eta) - (\tau-\eta)]}\right)^{-1} \wedge \left(\frac{\rho^{2s}}{\nu[2(t-\eta) - (\tau-\eta)]}\right)^{-\frac{\vert x-y \vert}{3\rho}}\\
&= e^{-\log\left( \frac{\rho^{2s}}{\nu[2(t-\eta) - (\tau-\eta)]} \right) \left( \frac{\vert x-y \vert}{3\rho} \vee 1\right)}.
\end{split}
\end{equation}

The following lemma shows that $H$ satisfies the assumption of Lemma \ref{lemma:Aronson} if $\nu > 1$ is chosen appropriately.

\begin{lemma}
\label{lemma:H}
For every $C,T > 0$, there exists $\nu = \nu(d,s,C,T) > 0$ such that for every $\rho > 0$, $y \in \R^d$ and $0 \le \eta \le t \le T$ with $t-\eta \le \frac{1}{4\nu}\rho^{2s}$, the function $H$ defined above satisfies:
\begin{align*}
C\left[\Gamma^{\rho}_s(H^{1/2},H^{1/2}) + |\nabla H|\right] \le -\partial_t H ~~ \text{ in } (\eta,t) \times \R^d, \qquad  H^{1/2} \in L^2((\eta,t);H^{s}(\R^d)).
\end{align*}
\end{lemma}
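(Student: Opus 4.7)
The plan is to write $H(\tau,x) = e^{-\alpha(\tau)\beta(x)}$ with
\[
\alpha(\tau) := \log\Bigl(\frac{\rho^{2s}}{\nu w(\tau)}\Bigr), \quad w(\tau) := 2(t-\eta) - (\tau-\eta) \in [t-\eta, 2(t-\eta)], \quad \beta(x) := \tfrac{|x-y|}{3\rho} \vee 1,
\]
and then verify the two required pointwise bounds on the LHS separately, playing off $-\partial_\tau H$. The assumption $t-\eta \le \rho^{2s}/(4\nu)$ ensures $\alpha(\tau) \ge \log 2 > 0$ on $[\eta,t]$, in particular $H \le 1/2$. A direct computation gives $\partial_\tau H = -\beta(x) H / w(\tau)$, and since $1/w = \nu e^{\alpha}/\rho^{2s}$ by the definition of $\alpha$, we obtain the lower bound
\[
-\partial_\tau H = \tfrac{\beta(x)}{w(\tau)} H \ge \tfrac{2\nu \beta(x)\, e^{\alpha}}{\rho^{2s}}\, H,
\]
which is the benchmark both competitors must beat.

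For the nonlocal piece, I would use the mean-value-based inequality $|e^{-a}-e^{-b}|^2 \le (a-b)^2 e^{-2\min(a,b)}$ with $a = \alpha\beta(x)/2$, $b=\alpha\beta(z)/2$, together with the fact that $\beta$ is $1/(3\rho)$-Lipschitz and that $z \in B_\rho(x)$ implies $\beta(z) \ge \beta(x) - 1/3$, hence $H(z) \le e^{\alpha/3}H(x)$. This yields
\[
\Gamma^{\rho}_s(H^{1/2},H^{1/2})(x) \le c \int_{B_\rho(x)} \tfrac{\alpha^2\,|x-z|^2}{36\rho^2}\, (H(x)+H(z))\, |x-z|^{-d-2s}\,\d z \le \tfrac{c\,\alpha^2 e^{\alpha/3}}{\rho^{2s}}\, H(x).
\]
Comparing with $-\partial_\tau H$ reduces the inequality $C\,\Gamma^{\rho}_s(H^{1/2},H^{1/2}) \le \tfrac{1}{2}(-\partial_\tau H)$ to $Cc\,\alpha^2 e^{-2\alpha/3} \le \beta\nu$, and since $\alpha^2 e^{-2\alpha/3} \le 9/e^2$ uniformly in $\alpha \ge 0$ and $\beta \ge 1$, this holds whenever $\nu \ge 9Cc/e^2$.

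For the drift piece, note that $\nabla \beta$ vanishes on $\{|x-y| < 3\rho\}$, so $\nabla H \equiv 0$ there; on $\{|x-y| > 3\rho\}$ we have $|\nabla H| = (\alpha/(3\rho))H$ and $\beta(x) = |x-y|/(3\rho) \ge 1$. The remaining inequality $C\alpha/(3\rho) \le \tfrac{1}{2}\cdot 2\nu\beta e^{\alpha}/\rho^{2s}$ becomes $C\alpha w \le 3\nu\rho\beta$. The key analytic input is the elementary calculus fact $\alpha(\tau)w(\tau) = w\log(\rho^{2s}/(\nu w)) \le \rho^{2s}/(e\nu)$, the maximum of $w \mapsto w\log(\rho^{2s}/(\nu w))$ on $(0, \rho^{2s}/(2\nu)]$ being attained at $w = \rho^{2s}/(e\nu)$. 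Using $3\rho\beta = |x-y| > 3\rho$ and selecting $\nu$ sufficiently large (depending on $C$ and $T$ through the scaling that forces $\rho$ to be bounded in terms of $T,\nu$ via $t-\eta \le T$) closes this estimate; this is the step I expect to require the most care, as the gradient term and the nonlocal energy scale differently in $\rho$.

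Finally, to verify $H^{1/2} \in L^2((\eta,t); H^s(\R^d))$, I would integrate in space: $\int_{\R^d} H(\tau,x)\,\d x \le c\rho^d + \int_{|x-y|>3\rho} e^{-\alpha|x-y|/(3\rho)}\,\d x = c\rho^d(1 + \alpha^{-d})$, which is finite and uniformly integrable in $\tau$ by $\alpha \ge \log 2$; the Gagliardo seminorm in $x$ is controlled via the same $\Gamma^\rho_s$ estimate above (plus an elementary tail bound on $\{|x-z|>\rho\}$ using exponential decay of $H^{1/2}$), and then time integration over the bounded interval $(\eta,t)$ gives the claim.
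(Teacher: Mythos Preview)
Your overall scheme coincides with the paper's: the same factorisation $H=e^{-\alpha\beta}$, the same Lipschitz/mean-value bound for $\Gamma^{\rho}_s(H^{1/2},H^{1/2})$, and the same observation that $\nabla H\equiv 0$ on $\{|x-y|<3\rho\}$. Where the two diverge is in closing the gradient estimate on $\{|x-y|>3\rho\}$, and your proposed resolution there is incorrect.

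Your bound $\alpha w \le \rho^{2s}/(e\nu)$ is fine, and it does reduce the required inequality to $\rho^{2s-1}\lesssim \nu$. But your claim that ``$\rho$ is bounded in terms of $T,\nu$ via $t-\eta\le T$'' is false: the hypothesis $t-\eta\le \rho^{2s}/(4\nu)$ is a \emph{lower} bound on $\rho$ for given $t-\eta$, not an upper bound, and $\rho>0$ is otherwise arbitrary in the statement. So for $s>1/2$ your argument does not close.

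The paper proceeds differently at this point. Rather than bounding the product $\alpha w$, it bounds $w$ itself \emph{linearly} in $\rho$ through the chain
\[
w \;\le\; 2(t-\eta) \;\le\; 2T^{1-\frac{1}{2s}}\,(t-\eta)^{\frac{1}{2s}} \;\le\; \frac{c(s,T)}{\nu^{1/(2s)}}\,\rho \;\le\; \frac{c(s,T)}{\nu^{1/(2s)}}\,|x-y|.
\]
The second inequality uses $(t-\eta)^{1-1/(2s)}\le T^{1-1/(2s)}$, which is exactly where the assumption $s\ge 1/2$ enters; the third uses $t-\eta\le \rho^{2s}/(4\nu)$; the last uses $|x-y|>3\rho$. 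This is the mechanism by which the $T$-dependence of $\nu$ arises --- not by bounding $\rho$, but by trading $(t-\eta)$ for $(t-\eta)^{1/(2s)}$ at the cost of a power of $T$. Your writeup misses this step entirely, and without it the gradient term cannot be absorbed for $s>1/2$.
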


\begin{proof}
Note that $H$ is the same function as in \cite{KaWe23b}. Therefore, we refer to \cite[Lemma 3.3]{KaWe23b} for the computations involving $\partial_t H$ and $\Gamma^{\rho}_s(H^{1/2},H^{1/2})$.
In case $|x-y| \le 2 \rho$, we have
\begin{align*}
-\partial_t H(\tau,x) = \nu \rho^{-2s}, ~~ \Gamma^{\rho}_s(H^{1/2},H^{1/2})(\tau,x) = |\nabla H(\tau,x)| = 0.
\end{align*}
In case $2 \rho \le |x-y| \le 3 \rho$, we have
\begin{align*}
-\partial_t H(\tau,x) = \nu \rho^{-2s}, ~~ \Gamma^{\rho}_s(H^{1/2},H^{1/2})(\tau,x) \le c \rho^{-2s}, ~~ |\nabla H(\tau,x)| = 0.
\end{align*}
In case $|x-y| > 3 \rho$, it holds
\begin{align*}
- \partial_t H(\tau,x) &= \frac{\vert x-y \vert}{3\rho [2(t-\eta) - (\tau-\eta)]}e^{-\log\left( \frac{\rho^{2s}}{\nu[2(t-\eta) - (\tau-\eta)]} \right) \left( \frac{\vert x-y \vert}{3\rho}\right)},\\
\Gamma^{\rho}_s(H^{1/2},H^{1/2}) &\le c \frac{\vert x-y \vert}{3\rho \nu [2(t-\eta) - (\tau-\eta)]}e^{-\log\left( \frac{\rho^{2s}}{\nu[2(t-\eta) - (\tau-\eta)]} \right) \left( \frac{\vert x-y \vert}{3\rho}\right)},\\
|\nabla H(\tau,x)| &\le c \frac{1}{3\rho} e^{-\log\left( \frac{\rho^{2s}}{\nu[2(t-\eta) - (\tau-\eta)]} \right) \left( \frac{\vert x-y \vert}{3\rho}\right)} \\
&\le c \frac{\vert x-y \vert}{3\rho \nu^{\frac{1}{2s}} [2(t-\eta) - (\tau-\eta)]}e^{-\log\left( \frac{\rho^{2s}}{\nu[2(t-\eta) - (\tau-\eta)]} \right) \left( \frac{\vert x-y \vert}{3\rho}\right)}.
\end{align*}
In the last step, we used that since $s \in [1/2,1)$:
\begin{align*}
[2(t-\eta) - (\tau - \eta)] \le c(s,T) (t-\eta)^{\frac{1}{2s}} \le \frac{c(s,T)}{\nu^{\frac{1}{2s}}}\rho \le \frac{c(s,T)}{\nu^{\frac{1}{2s}}} |x-y|,
\end{align*}
where $c(s,T) > 0$ is a constant. This proves the desired result upon choosing $\nu > 1$ accordingly.
\end{proof}

Next, we apply Lemma \ref{lemma:Aronson} to $H$. This yields the following lemma:

\begin{lemma}
\label{lemma:auxtrunc}
Let $y \in \R^d$, $\sigma,\rho > 0$ and $0 \le \eta \le T$. Let $u_0 \in L^2(\R^d)$ be such that $u_0 \equiv 0$ in $B_{\sigma}(y)$. Assume that $u \in L^{\infty}( (\eta,T) \times \R^d)$ is a weak solution to $\partial_t u \pm (b,\nabla u) + \mathcal{L}_t^{\rho} u = 0$ in $(\eta,T) \times \R^d$ with $u(\eta) = u_0$. Then there exist $\nu > 1, C > 0$, depending only on $d,s,\Lambda,T$, such that for every $t \in (\eta,T)$ with $ t-\eta \le \frac{1}{4\nu}\rho^{2s}$:
\begin{equation*}
\vert u(t,y) \vert \le C (t-\eta)^{-\frac{d}{4s}}2^{\frac{\sigma}{6\rho}} \left(\frac{\rho^{2s}}{\nu (t-\eta)}\right)^{-\frac{\sigma}{6\rho} + \frac{1}{2} + \frac{d}{4s}} \Vert u_0\Vert_{L^2(\R^d)}.
\end{equation*}
\end{lemma}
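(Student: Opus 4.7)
The strategy is to combine Aronson's weighted $L^2$-estimate with the truncated local boundedness lemma. The weight $H$ defined in \eqref{eq:H} is tuned precisely so that it is large where $u_0$ may be nonzero (i.e. outside $B_\sigma(y)$) and comparable to $\nu(t-\eta)/\rho^{2s}$ near $y$, which will allow us to first bound a weighted $L^2$ quantity and then convert it to a pointwise bound at $(t,y)$.

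First, I would fix $\nu=\nu(d,s,\Lambda,T)>1$ large enough so that Lemma \ref{lemma:H} supplies, for the constant $C$ of Lemma \ref{lemma:Aronson}, both conditions required by Aronson's estimate on $[\eta,t]$. Since $\operatorname{div}(b)=0$ and the proof of Lemma \ref{lemma:Aronson} uses only this divergence-free property of the drift, the estimate applies equally to the equation with $+(b,\nabla u)$ and with $-(b,\nabla u)$. I therefore obtain
\[
\sup_{\tau\in(\eta,t)}\int_{\R^d} H(\tau,x)\,u^2(\tau,x)\,\d x \;\le\; \int_{\R^d} H(\eta,x)\,u_0^2(x)\,\d x.
\]
The assumption $t-\eta\le \rho^{2s}/(4\nu)$ gives $\rho^{2s}/(2\nu(t-\eta))>1$, so from \eqref{eq:H} the function $x\mapsto H(\eta,x)$ is non-increasing in $|x-y|$. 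Combining this with $u_0\equiv 0$ on $B_\sigma(y)$, and writing $b_0:=(\sigma/(3\rho))\vee 1$, I arrive at
\[
\int_{\R^d} H(\eta,x)\,u_0^2(x)\,\d x \;\le\; \left(\frac{\rho^{2s}}{2\nu(t-\eta)}\right)^{-b_0}\|u_0\|_{L^2(\R^d)}^2.
\]

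Next, I exploit that for $x\in B_{2\rho}(y)$ one has $(|x-y|/(3\rho))\vee 1=1$, so by \eqref{eq:H} and $\tau\le t$,
\[
H(\tau,x)=\frac{\nu[2(t-\eta)-(\tau-\eta)]}{\rho^{2s}}\ge \frac{\nu(t-\eta)}{\rho^{2s}} \qquad\text{for } \tau\in(\eta,t),\ x\in B_{2\rho}(y).
\]
Dividing the weighted $L^2$-bound by this lower bound yields, for every $\tau\in(\eta,t)$,
\[
\int_{B_{2\rho}(y)} u^2(\tau,x)\,\d x \;\le\; \frac{\rho^{2s}}{\nu(t-\eta)}\left(\frac{\rho^{2s}}{2\nu(t-\eta)}\right)^{-b_0}\|u_0\|_{L^2(\R^d)}^2.
\]

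Finally, I choose $R=c_0(t-\eta)^{1/(2s)}$ with $c_0=c_0(s)$ small enough so that the hypothesis $R\le\rho/2$ of Lemma \ref{lemma:local-boundedness-truncated} is satisfied (this uses $t-\eta\le\rho^{2s}/(4\nu)$ and $\nu>1$) and $I_R^{\ominus}(t)\subset(\eta,t)$. Applying Lemma \ref{lemma:local-boundedness-truncated} on $Q_R(t,y)$ to $\pm u$, and passing to $(t,y)$ through the continuity of bounded solutions to the truncated equation, I obtain
\[
|u(t,y)|\;\le\; C\left(\frac{\rho}{R}\right)^{d/2}R^{-d/2}\sup_{\tau\in I_R^{\ominus}(t)}\left(\int_{B_{2\rho}(y)}u^2(\tau,x)\,\d x\right)^{1/2}.
\]
Substituting the previous display, absorbing $c_0$, $\nu$ and numerical constants into $C$, and writing $A:=\rho^{2s}/(\nu(t-\eta))$, the prefactor becomes
\[
\rho^{d/2}R^{-d}=c\,(t-\eta)^{-d/(4s)}A^{d/(4s)},
\]
while the weighted estimate contributes an extra $A^{1/2}(A/2)^{-b_0/2}$. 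Because $A>1$ and $b_0\ge \sigma/(3\rho)$, I have $A^{-b_0/2}\le A^{-\sigma/(6\rho)}$ and $2^{b_0/2}\le c\cdot 2^{\sigma/(6\rho)}$, yielding the claimed inequality. The only subtle points to check carefully will be (a) the case distinction $\sigma\ge 3\rho$ vs.\ $\sigma<3\rho$ in the monotonicity step, and (b) the justification that the sup on $Q_{R/2}(t,y)$ indeed controls $|u(t,y)|$ via continuity at the top of the cylinder; both are routine but deserve a line of comment.
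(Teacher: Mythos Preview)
Your proposal is correct and follows essentially the same approach as the paper: apply Aronson's weighted estimate (Lemma \ref{lemma:Aronson}) with the weight $H$ from \eqref{eq:H}, bound $H(\eta,\cdot)$ from above on $\R^d\setminus B_\sigma(y)$ and from below on $[\eta,t]\times B_{2\rho}(y)$, and then convert the resulting $L^2$-in-space control to a pointwise bound via the truncated local boundedness Lemma \ref{lemma:local-boundedness-truncated} with $R\asymp (t-\eta)^{1/(2s)}$. Your explicit handling of the case $\sigma<3\rho$ via $b_0=(\sigma/(3\rho))\vee 1$ is slightly more careful than the paper's presentation, but the argument and the resulting estimate are the same.
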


\begin{proof}
We apply Lemma \ref{lemma:Aronson} with $H$ as in \eqref{eq:H}. This is possible due to Lemma \ref{lemma:H}. It follows that for every $y \in \R^d$, $0 \le \eta \le t \le T$ with $t-\eta \le \frac{1}{4\nu}\rho^{2s}$, applying also Lemma \ref{lemma:local-boundedness-truncated} with $R = (t-\eta)^{\frac{1}{2s}}$, $t_0 = t$, $x_0 = y$:
\begin{equation*}
\begin{split}
|u(t,y)| &\le c_1 (t-\eta)^{-\frac{d}{4s}} \left( \frac{\rho^{2s}}{t-\eta} \right)^{\frac{d}{4s}} \sup_{\tau \in (\eta,t)}\left(\int_{B_{2\rho}(y)} u^2(\tau,x)  \d x \right)^{1/2}\\
&\le c_2 (t-\eta)^{-\frac{d}{4s}} \left( \frac{\rho^{2s}}{t-\eta} \right)^{\frac{d}{4s}} \left( \frac{\sup_{x \in \R^d \setminus B_{\sigma}(y)} H(\eta,x)}{\inf_{\tau \in (\eta,t), x \in B_{2\rho}(y)} H(\tau,x)} \right)^{1/2} \Vert u_0 \Vert_{L^2(\R^d)}
\end{split}
\end{equation*}
for some constants $c_1, c_2 > 0$.\\
Note that there exist $c_3, c_4 > 0$ such that for $x \in \R^d \setminus B_{\sigma}(y)$ it holds 
\begin{equation*}
H(\eta,x) \le c_3 \left(\frac{\rho^{2s}}{2\nu (t-\eta)}\right)^{-\frac{\sigma}{3\rho}}
\end{equation*}
and for $(\tau,x) \in [\eta,t] \times B_{2\rho}(y)$ we have:
\begin{equation*}
H(\tau,x)\ge c_4 \left(\frac{\rho^{2s}}{\nu (t-\eta)}\right)^{-1}.
\end{equation*}
This follows directly from the definition of $H$ and $t-\eta \le \frac{1}{4\nu}\rho^{2s}$. 
Together, we obtain 
\begin{equation*}
\vert u(t,y)\vert \le c_5 (t-\eta)^{-\frac{d}{4s}} 2^{\frac{\sigma}{6\rho}} \left(\frac{\rho^{2s}}{\nu (t-\eta)}\right)^{-\frac{\sigma}{6\rho} + \frac{1}{2} + \frac{d}{4s}} \Vert u_0 \Vert_{L^2(\R^d)}
\end{equation*}
for some constant $c_5 > 0$, as desired.
\end{proof}

Now, we are in the position to deduce off-diagonal heat kernel estimates for the truncated heat kernel $p^{\rho}$:

\begin{proof}[Proof of Proposition \ref{thm:offdiagtrunc}]
First, observe the following on-diagonal bound for $p^{\rho}$, which follows from \eqref{eq:on-diag} combined with the second estimate in Lemma \ref{lemma:gluing}:
\begin{align}
\label{eq:on-diag-trunc}
p^{\rho}(t,x,y) \le e^{C(t-\eta)\rho^{-2s}} (t-\eta)^{-\frac{d}{2s}}.
\end{align}
The on-diagonal bound \eqref{eq:on-diag-trunc} and \eqref{eq:int} immediately imply for every $0 \le \eta \le t \le T$ with $t-\eta \le \frac{1}{4\nu}\rho^{2s}$ and $x,y \in \R^d$:
\begin{align}
\label{eq:weakl2esttrunc1-nonsym}
\left(\int_{\R^d} p^{\rho}(\eta,x;t,z)^2 \d z\right)^{1/2} \le c_1 (t-\eta)^{-\frac{d}{4s}},\\
\label{eq:weakl2esttrunc2-nonsym}
\left(\int_{\R^d} p^{\rho}(\eta,z;t,x)^2 \d z\right)^{1/2} \le c_1 (t-\eta)^{-\frac{d}{4s}}
\end{align}
for some $c_1 > 0$. On the other hand, from Lemma \ref{lemma:auxtrunc}, it follows for every $0 \le \eta \le t \le T$ with $t-\eta \le \frac{1}{4\nu}\rho^{2s}$ and $y \in \R^d$:
\begin{align}
\label{eq:l2esttrunc1-nonsym}
\left(\int_{\R^d \setminus B_{\sigma}(y)} p^{\rho}(\eta,y;t,z)^2 \d z\right)^{1/2} \le c_2 (t-\eta)^{-\frac{d}{4s}}2^{\frac{\sigma}{6\rho}} \left(\frac{\rho^{2s}}{\nu (t-\eta)}\right)^{-\frac{\sigma}{6\rho} + \frac{1}{2} + \frac{d}{4s}},\\
\label{eq:l2esttrunc2-nonsym}
\left(\int_{\R^d \setminus B_{\sigma}(y)} p^{\rho}(\eta,z;t,y)^2 \d z\right)^{1/2} \le c_2 (t-\eta)^{-\frac{d}{4s}}2^{\frac{\sigma}{6\rho}} \left(\frac{\rho^{2s}}{\nu (t-\eta)}\right)^{-\frac{\sigma}{6\rho} + \frac{1}{2} + \frac{d}{4s}}
\end{align}
for some $c_2 > 0$. To prove \eqref{eq:l2esttrunc1-nonsym}, one observes that
\begin{align*}
u(\tau,x) = \int_{\R^d \setminus B_{\sigma}(y)} p^{\rho}(\eta,y;t,z)p^{\rho}(\eta,x;\tau,z)\d z
\end{align*}
solves $\partial_t u +(b,\nabla u) + \mathcal{L}_t^{\rho} u = 0$ and satisfies the assumptions of Lemma \ref{lemma:auxtrunc} with 
\begin{align*}
u_0(x) = p^{\rho}(\eta,y;t,x)\mathbbm{1}_{\{\vert x-y \vert > \sigma\}}(x), \qquad u(t,y) = \int_{\R^d \setminus B_{\sigma}(y)} p^{\rho}(\eta,y;t,z)^2 \d z.
\end{align*}
From here, \eqref{eq:l2esttrunc2-nonsym} follows by the same arguments, choosing 
\begin{align*}
\widehat{u}(\tau,x) = \int_{\R^d \setminus B_{\sigma}(y)} p^{\rho}(\eta,z;t,y)p^{\rho}(\eta,z;\tau,x)\d z,
\end{align*}
which solves the dual equation $\partial_t u -(b,\nabla u) + \mathcal{L}_t^{\rho} u = 0$ with $\widehat{u}_0(x) = p^{\rho}(\eta,x;t,y)\mathbbm{1}_{\{\vert x-y \vert > \sigma\}}(x)$.\\
Note that boundedness of $u$ and $\widehat{u}$, which is required in order to apply Lemma \ref{lemma:auxtrunc}, is a direct consequence of the on-diagonal bound \eqref{eq:on-diag-trunc} and integrability of the truncated heat kernel, which follows from \eqref{eq:int} and the second property in Lemma \ref{lemma:gluing}.\\
To prove \eqref{eq:offdiagtrunc}, let us fix $0 \le \eta \le t \le T$ with $t-\eta \le \frac{1}{4\nu}\rho^{2s}$, and $x,y \in \R^d$. Then we define $\sigma = \frac{1}{2}\vert x-y \vert$ and compute, using the semigroup property:
\begin{align*}
p^{\rho}(\eta,x;t,y) &= \int_{\R^d} p^{\rho}(\eta,x;(t-\eta)/2,z)p^{\rho}((t-\eta)/2,z;t,y) \d z\\
&= \int_{\R^d \setminus B_{\sigma}(y)}p^{\rho}(\eta,x;(t-\eta)/2,z)p^{\rho}((t-\eta)/2,z;ty) \d z \\
&\quad + \int_{B_{\sigma}(y)}p^{\rho}(\eta,x;(t-\eta)/2,z)p^{\rho}((t-\eta)/2,z;ty) \d z\\
& =: J_1+J_2.
\end{align*}
For $J_1$, we compute, using \eqref{eq:weakl2esttrunc1-nonsym}, \eqref{eq:l2esttrunc2-nonsym}
\begin{align*}
J_1 &\le \left(\int_{\R^d \setminus B_{\sigma}(y)} p^{\rho}(\eta,x;(t-\eta)/2,z)^2 \d z \right)^{1/2} \left(\int_{\R^d \setminus B_{\sigma}(y)}p^{\rho}((t-\eta)/2,z;t,y)^2 \d z \right)^{1/2}\\
&\le c_3 (t-\eta)^{-\frac{d}{2s}} 2^{\frac{\vert x-y \vert}{12\rho}} \left( \frac{\rho^{2s}}{\nu (t-\eta)} \right)^{-\frac{\vert x-y \vert}{12\rho} + \frac{1}{2} + \frac{d}{4s}}
\end{align*}
for some $c_3 > 0$. For $J_2$, observe that $B_{\sigma}(y) \subset \R^d \setminus B_{\sigma}(x)$, and therefore by \eqref{eq:weakl2esttrunc2-nonsym}, \eqref{eq:l2esttrunc1-nonsym}:
\begin{align*}
J_2 &\le \left(\int_{\R^d \setminus B_{\sigma}(x)} p^{\rho}(\eta,x;(t-\eta)/2,z)^2 \d z \right)^{1/2} \left(\int_{\R^d \setminus B_{\sigma}(x)} p^{\rho}((t-\eta)/2,z;t,y)^2 \d z \right)^{1/2}\\
&\le c_3 (t-\eta)^{-\frac{d}{2s}} 2^{\frac{\vert x-y \vert}{12\rho}} \left( \frac{\rho^{2s}}{\nu (t-\eta)} \right)^{-\frac{\vert x-y \vert}{12\rho} + \frac{1}{2} + \frac{d}{4s}}.
\end{align*}
Together, we obtain the desired result.
\end{proof}

\subsection{Proof of the upper heat kernel estimate}

Now, we are in the position to prove Theorem \ref{thm:uhkeintro}:

\begin{proof}[Proof of Theorem \ref{thm:uhkeintro}]
Let $x,y \in \R^d$ be fixed. By the on-diagonal estimate \eqref{eq:on-diag} it suffices to show that for some constants $c_0, c_1,c_2 > 0$ and $t-\eta \le c_0 \vert x-y \vert^{2s}$ it holds
\begin{equation*}
p(\eta,x;t,y) \le c_1\frac{t-\eta}{\vert x-y \vert^{d+2s}}.
\end{equation*}
By Lemma \ref{lemma:gluing}, we know that for every $\rho > 0$ and $0 \le \eta \le t \le T$ with $t-\eta \le \frac{1}{4\nu} \rho^{2s}$:
\begin{equation}
\label{eq:gluinglemma-nonsym}
p(\eta,x;t,y) \le p^{\rho}(\eta,x;t,y) + C (t-\eta) \rho^{-d-2s}
\end{equation}
for some $C > 0$.
We choose $\rho = \frac{\vert x-y \vert}{12}\left(\frac{d+2s}{2s} + \frac{1}{2} + \frac{d}{4s}\right)^{-1}$ and define $c_2 = 12 \left(\frac{d+2s}{2s} + \frac{1}{2} + \frac{d}{4s}\right)$. Then by combination of \eqref{eq:offdiagtrunc} and \eqref{eq:gluinglemma-nonsym}, we deduce for $t-\eta \le \frac{1}{4\nu}\rho^{2s}$:
\begin{equation*}
p(\eta,x;t,y) \le c_1 (t-\eta) \vert x-y \vert^{-d-2s},
\end{equation*}
where $c_1 >  0$. This proves the desired result.
\end{proof}

\section*{Acknowledgments}

S. N. gratefully acknowledges the support of the German Research Foundation - SFB 1283/2 2021 - 317210226. Y. S. is partially supported by NSF DMS grant $2154219$, " Regularity {\sl vs} singularity formation in elliptic and parabolic equations". Part of this work was done while S. N. was visiting Johns Hopkins University whose hospitality is acknowledged.
M. W. was supported by the European Research Council (ERC) under the Grant Agreement No 801867 and by the AEI project PID2021-125021NA-I00 (Spain). Q. H. N. is supported by the Academy of Mathematics and Systems Science, Chinese Academy of Sciences startup fund; CAS Project for Young Scientists in Basic Research, Grant No. YSBR-031;  and the National Natural Science Foundation of China (No. 12288201);  and the National Key R$\&$D Program of China under grant 2021YFA1000800.

\bibliographystyle{alpha} 
\bibliography{biblio}

\newcommand{\etalchar}[1]{$^{#1}$}
\begin{thebibliography}{BDGO97}

\bibitem[APT22]{APT22}
Karthik Adimurthi, Harsh Prasad, and Vivek Tewary.
\newblock H{\"o}lder regularity for fractional p-{L}aplace equations.
\newblock {\em arXiv:2203.13082}, 2022.

\bibitem[Aro68]{Aro68}
Donald Aronson.
\newblock Non-negative solutions of linear parabolic equations.
\newblock {\em Ann. Sc. Norm. Super. Pisa Cl. Sci.}, 22(4):607--694, 1968.

\bibitem[BCD{\etalchar{+}}18]{BCDKS}
Dominic Breit, Andrea Cianchi, Lars Diening, Tuomo Kuusi, and Sebastian
  Schwarzacher.
\newblock Pointwise {C}alder\'{o}n-{Z}ygmund gradient estimates for the
  {$p$}-{L}aplace system.
\newblock {\em J. Math. Pures Appl. (9)}, 114:146--190, 2018.

\bibitem[BDGO97]{BDG}
Lucio Boccardo, Andrea Dall'Aglio, Thierry Gallou\"{e}t, and Luigi Orsina.
\newblock Nonlinear parabolic equations with measure data.
\newblock {\em J. Funct. Anal.}, 147(1):237--258, 1997.

\bibitem[BK23]{ByKy23}
Sun-Sig Byun and Kyeongbae Kim.
\newblock H{\"o}lder estimate with an optimal tail for nonlocal parabolic
  p-{L}aplace equations.
\newblock 2023.
\newblock To appear in Ann. Mat. Pura Appl.

\bibitem[BLS21]{BLS21}
Lorenzo Brasco, Erik Lindgren, and Martin Str\"{o}mqvist.
\newblock Continuity of solutions to a nonlinear fractional diffusion equation.
\newblock {\em J. Evol. Equ.}, 21(4):4319--4381, 2021.

\bibitem[BY19]{BYP}
Sun-Sig Byun and Yeonghun Youn.
\newblock Potential estimates for elliptic systems with subquadratic growth.
\newblock {\em J. Math. Pures Appl. (9)}, 131:193--224, 2019.

\bibitem[CCV11]{CCV}
Luis Caffarelli, Chi~Hin Chan, and Alexis Vasseur.
\newblock Regularity theory for parabolic nonlinear integral operators.
\newblock {\em J. Amer. Math. Soc.}, 24(3):849--869, 2011.

\bibitem[Cia11]{Cianchi}
Andrea Cianchi.
\newblock Nonlinear potentials, local solutions to elliptic equations and
  rearrangements.
\newblock {\em Ann. Sc. Norm. Super. Pisa Cl. Sci. (5)}, 10(2):335--361, 2011.

\bibitem[CKS87]{CKS87}
Eric Carlen, Shigeo Kusuoka, and Daniel~W Stroock.
\newblock Upper bounds for symmetric {M}arkov transition functions.
\newblock In {\em Ann. Inst. H. Poincaré. Probab. Statist.}, volume~23, pages
  245--287, 1987.

\bibitem[CMT94]{CMT}
Peter Constantin, Andrew Majda, and Esteban Tabak.
\newblock Formation of strong fronts in the {$2$}-{D} quasigeostrophic thermal
  active scalar.
\newblock {\em Nonlinearity}, 7(6):1495--1533, 1994.

\bibitem[Coz17]{Coz17}
Matteo Cozzi.
\newblock Regularity results and {H}arnack inequalities for minimizers and
  solutions of nonlocal problems: a unified approach via fractional {D}e
  {G}iorgi classes.
\newblock {\em J. Funct. Anal.}, 272(11):4762--4837, 2017.

\bibitem[CV10]{CaffaVasseur}
Luis Caffarelli and Alexis Vasseur.
\newblock Drift diffusion equations with fractional diffusion and the
  quasi-geostrophic equation.
\newblock {\em Ann. of Math. (2)}, 171(3):1903--1930, 2010.

\bibitem[Dav87]{Dav87}
E.~Davies.
\newblock Explicit constants for {G}aussian upper bounds on heat kernels.
\newblock {\em Amer. J. Math.}, 109(2):319--333, 1987.

\bibitem[Dav89]{Dav89}
E.~Davies.
\newblock {\em Heat kernels and spectral theory}.
\newblock Number~92. Cambridge university press, 1989.

\bibitem[DCKP16]{DKP}
Agnese Di~Castro, Tuomo Kuusi, and Giampiero Palatucci.
\newblock Local behavior of fractional {$p$}-minimizers.
\newblock {\em Ann. Inst. H. Poincar\'{e} C Anal. Non Lin\'{e}aire},
  33(5):1279--1299, 2016.

\bibitem[DF22]{DFJMPA}
Cristiana De~Filippis.
\newblock Quasiconvexity and partial regularity via nonlinear potentials.
\newblock {\em J. Math. Pures Appl. (9)}, 163:11--82, 2022.

\bibitem[DiB93]{DiB93}
Emmanuele DiBenedetto.
\newblock {\em Degenerate parabolic equations}.
\newblock Springer Science \& Business Media, 1993.

\bibitem[DM11]{duzaarMin}
Frank Duzaar and Giuseppe Mingione.
\newblock Gradient estimates via non-linear potentials.
\newblock {\em Amer. J. Math.}, 133(4):1093--1149, 2011.

\bibitem[DN23]{DNCZ}
Lars Diening and Simon Nowak.
\newblock Calder\'on-{Z}ygmund estimates for the fractional $p$-{L}aplacian.
\newblock {\em arXiv:2111.05768}, 2023.

\bibitem[DS18]{delgadinoSmith}
Mat\'{\i}as Delgadino and Scott Smith.
\newblock H\"{o}lder estimates for fractional parabolic equations with critical
  divergence free drifts.
\newblock {\em Ann. Inst. H. Poincar\'{e} C Anal. Non Lin\'{e}aire},
  35(3):577--604, 2018.

\bibitem[DZ21]{DongJEMS}
Hongjie Dong and Hanye Zhu.
\newblock Gradient estimates for singular $p$-{L}aplace type equations with
  measure data.
\newblock {\em arXiv:2102.08584; to appear in J. Eur. Math. Soc.}, 2021.

\bibitem[FK13]{FelsKass}
Matthieu Felsinger and Moritz Kassmann.
\newblock Local regularity for parabolic nonlocal operators.
\newblock {\em Comm. Partial Differential Equations}, 38(9):1539--1573, 2013.

\bibitem[Kas09]{KassCalcVar}
Moritz Kassmann.
\newblock A priori estimates for integro-differential operators with measurable
  kernels.
\newblock {\em Calc. Var. Partial Differential Equations}, 34(1):1--21, 2009.

\bibitem[Kis10]{kiselev}
A.~Kiselev.
\newblock Regularity and blow up for active scalars.
\newblock {\em Math. Model. Nat. Phenom.}, 5(4):225--255, 2010.

\bibitem[KM94]{KM94}
Tero Kilpel{\"a}inen and Jan Mal{\'y}.
\newblock The {W}iener test and potential estimates for quasilinear elliptic
  equations.
\newblock {\em Acta Math.}, 172(1):137--161, 1994.

\bibitem[KM14]{KuMi}
Tuomo Kuusi and Giuseppe Mingione.
\newblock Riesz potentials and nonlinear parabolic equations.
\newblock {\em Arch. Ration. Mech. Anal.}, 212(3):727--780, 2014.

\bibitem[KM18]{KuMiV}
Tuomo Kuusi and Giuseppe Mingione.
\newblock Vectorial nonlinear potential theory.
\newblock {\em J. Eur. Math. Soc.}, 20(4):929--1004, 2018.

\bibitem[KMS15]{KMS1}
Tuomo Kuusi, Giuseppe Mingione, and Yannick Sire.
\newblock Nonlocal equations with measure data.
\newblock {\em Comm. Math. Phys.}, 337(3):1317--1368, 2015.

\bibitem[KMS18]{KMS2}
Tuomo Kuusi, Giuseppe Mingione, and Yannick Sire.
\newblock Regularity issues involving the fractional {$p$}-{L}aplacian.
\newblock In {\em Recent developments in nonlocal theory}, pages 303--334. De
  Gruyter, Berlin, 2018.

\bibitem[KN09]{KN}
A.~Kiselev and F.~Nazarov.
\newblock A variation on a theme of {C}affarelli and {V}asseur.
\newblock {\em Zap. Nauchn. Sem. S.-Peterburg. Otdel. Mat. Inst. Steklov.
  (POMI)}, 370(Kraevye Zadachi Matematichesko\u{\i} Fiziki i Smezhnye Voprosy
  Teorii Funktsi\u{\i}. 40):58--72, 220, 2009.

\bibitem[KNS22]{KNS}
Tuomo Kuusi, Simon Nowak, and Yannick Sire.
\newblock Gradient regularity and first-order potential estimates for a class
  of nonlocal equations.
\newblock {\em arXiv:2212.01950}, 2022.

\bibitem[KNV07]{KNV}
A.~Kiselev, F.~Nazarov, and A.~Volberg.
\newblock Global well-posedness for the critical 2{D} dissipative
  quasi-geostrophic equation.
\newblock {\em Invent. Math.}, 167(3):445--453, 2007.

\bibitem[KW22a]{KaWe22a}
Moritz Kassmann and Marvin Weidner.
\newblock Nonlocal operators related to nonsymmetric forms {I}: {H}{\"o}lder
  estimates.
\newblock {\em arXiv:2203.07418}, 2022.

\bibitem[KW22b]{KaWe22b}
Moritz Kassmann and Marvin Weidner.
\newblock Nonlocal operators related to nonsymmetric forms {II}: {H}arnack
  inequalities.
\newblock {\em arXiv:2205.05531, to appear in Anal. PDE}, 2022.

\bibitem[KW23a]{KaWe23}
Moritz Kassmann and Marvin Weidner.
\newblock The parabolic {H}arnack inequality for nonlocal equations.
\newblock {\em arXiv:2303.05975}, 2023.

\bibitem[KW23b]{KaWe23b}
Moritz Kassmann and Marvin Weidner.
\newblock Upper heat kernel estimates for nonlocal operators via {A}ronson’s
  method.
\newblock {\em Calc. Var. Partial Differential Equations}, 62(2):68, 2023.

\bibitem[Lia22]{Lia22}
Naian Liao.
\newblock H{\"o}lder regularity for parabolic fractional p-{L}aplacian.
\newblock {\em arXiv:2205.10111}, 2022.

\bibitem[Min11]{Min}
Giuseppe Mingione.
\newblock Gradient potential estimates.
\newblock {\em J. Eur. Math. Soc.}, 13(2):459--486, 2011.

\bibitem[MM13a]{MaMi13b}
Yasunori Maekawa and Hideyuki Miura.
\newblock On fundamental solutions for non-local parabolic equations with
  divergence free drift.
\newblock {\em Adv. Math.}, 247:123--191, 2013.

\bibitem[MM13b]{MaMi13a}
Yasunori Maekawa and Hideyuki Miura.
\newblock Upper bounds for fundamental solutions to non-local diffusion
  equations with divergence free drift.
\newblock {\em J. Funct. Anal.}, 264(10):2245--2268, 2013.

\bibitem[Ngu14]{QH}
Quoc-Hung Nguyen.
\newblock Potential estimates and quasilinear parabolic equations with measure
  data.
\newblock {\em arXiv:1405.2587, to appear in Mem. Amer. Math. Soc.}, 2014.

\bibitem[NST23]{sireDCDS}
Quoc-Hung Nguyen, Yannick Sire, and Le~Xuan Truong.
\newblock H\"{o}lder continuity of solutions for a class of drift-diffusion
  equations.
\newblock {\em Discrete Contin. Dyn. Syst.}, 43(3-4):1657--1685, 2023.

\bibitem[Sil11]{SilvestreAdv}
Luis Silvestre.
\newblock On the differentiability of the solution to the {H}amilton-{J}acobi
  equation with critical fractional diffusion.
\newblock {\em Adv. Math.}, 226(2):2020--2039, 2011.

\bibitem[Sil12a]{SilvestrePisa}
Luis Silvestre.
\newblock H\"{o}lder estimates for advection fractional-diffusion equations.
\newblock {\em Ann. Sc. Norm. Super. Pisa Cl. Sci. (5)}, 11(4):843--855, 2012.

\bibitem[Sil12b]{SilvestreIndiana}
Luis Silvestre.
\newblock On the differentiability of the solution to an equation with drift
  and fractional diffusion.
\newblock {\em Indiana Univ. Math. J.}, 61(2):557--584, 2012.

\bibitem[TW02]{TWAJM}
Neil Trudinger and Xu-Jia Wang.
\newblock On the weak continuity of elliptic operators and applications to
  potential theory.
\newblock {\em Amer. J. Math.}, 124(2):369--410, 2002.

\end{thebibliography}

\end{document}